\renewcommand{\tocsubsection}[3]{%
  \indentlabel{\@ifnotempty{#2}{\hspace*{2.3em}\makebox[2.8em][l]{%
    \ignorespaces#1 #2.\hfill}}}#3}
\DeclareMathOperator{\ad}{ad}
\DeclareMathOperator{\id}{id}
\DeclareMathOperator{\Sym}{Sym}
\DeclareMathOperator{\Tr}{Tr}
\DeclareMathOperator{\Trd}{Trd}
\DeclareMathOperator{\End}{End}
\DeclareMathOperator{\Ann}{Ann}
\DeclareMathOperator{\Supp}{Supp}
\DeclareMathOperator{\sign}{sign}
\DeclareMathOperator{\Nil}{Nil}
\DeclareMathOperator{\Int}{Int}
\DeclareMathOperator{\rk}{rank}
\DeclareMathOperator{\Skew}{Skew}
\DeclareMathOperator{\PD}{PD}
\newcommand{\N}{\mathbb{N}}
\newcommand{\Z}{\mathbb{Z}}
\newcommand{\R}{\mathbb{R}}
\newcommand{\CS}{\mathcal{S}}
\newcommand{\fm}{\mathfrak{m}}
\newcommand{\fp}{\mathfrak{p}}
\newcommand{\io}[1]{\prescript{\iota}{}{#1}}
\newcommand{\iss}[1]{\prescript{\sigma}{}{#1}}
\newcommand{\iop}{\io\!\!\cdot}
\newcommand{\ts}{\mathrm{ts}}
\newcommand{\tsa}{\cdot_\ts}
\newcommand{\CM}{\mathscr{M}}
\newcommand{\CCS}{\mathscr{S}}
\newcommand{\Sper}{\operatorname{Sper}}
\newcommand{\Sperm}{\operatorname{Sper}^{\mathrm{max}}}
\newcommand{\Spec}{\operatorname{Spec}}
\newcommand{\s}{\sigma}
\newcommand{\Sign}{\operatorname{Sign}}
\newcommand{\ox}{\otimes}
\newcommand{\x}{\times}
\newcommand{\Qf}{\mathrm{qf}}
\newcommand{\ve}{\varepsilon}
\newcommand{\vt}{\vartheta}
\newcommand{\qf}[1]{\langle #1\rangle}
\newcommand{\Pf}[1]{\langle\!\langle #1\rangle\!\rangle}
\newcommand{\knp}{\bullet}
\newcommand{\sw}{\mathrm{sw}}
\newcommand{\bH}{\mathfrak{H}}
\newcommand{\op}{\mathrm{op}}
\newcommand{\Id}{\mathrm{id}}
\newcommand{\Herm}{\mathfrak{Herm}}
\newcommand{\vf}{\varphi}
\DeclareMathOperator*{\bigperp}{\raisebox{-.8ex}{\scalebox{2}{$\perp$}}}
\newcommand{\mab}{{\CM_{\bar \alpha}}}
\theoremstyle{plain}
  \newtheorem{thm}{Theorem}[section]
  \newtheorem{lem}[thm]{Lemma}
  \newtheorem{lemma}[thm]{Lemma}
  \newtheorem{cor}[thm]{Corollary}
  \newtheorem{prop}[thm]{Proposition}
\theoremstyle{definition}
  \newtheorem{defi}[thm]{Definition}
  \newtheorem{rem}[thm]{Remark}
\numberwithin{equation}{section}
\begin{document}

\title{Pfister's local-global principle for Azumaya algebras
with involution}

\author[V. Astier]{Vincent Astier}
\author[T. Unger]{Thomas Unger}
\address{School of Mathematics and Statistics, 
University College Dublin, Belfield, Dublin~4, Ireland}
\email{vincent.astier@ucd.ie}
\email{thomas.unger@ucd.ie}

\subjclass[2020]{16H05, 11E39, 13J30, 16W10}
\keywords{Azumaya algebras, involutions, hermitian forms, Witt groups, torsion,
positivity}

\begin{abstract}
  We prove Pfister's local-global principle for hermitian forms over Azumaya
  algebras with involution over semilocal rings, and show in particular that
  the Witt group of nonsingular hermitian forms is $2$-primary torsion. Our
  proof relies on a hermitian version of Sylvester's law of inertia, which is
  obtained from an investigation of the connections between a pairing of
  hermitian forms extensively studied by Garrel, signatures of hermitian forms,
  and positive semidefinite quadratic forms.
\end{abstract}

\maketitle

\tableofcontents

\section{Introduction}

Pfister's local-global principle is a fundamental result in the algebraic
theory of quadratic forms over fields. It states that the torsion in the Witt
ring is $2$-primary, and that a nonsingular
quadratic form represents a torsion element in this ring
if and only if its signature (the difference between the number of positive
and the number of negative entries in any diagonalization of the form
according to Sylvester's ``law of inertia'')
is zero at all orderings of the field.  
The above facts are of course well-known, and can easily be found in
standard references such as \cite{sch} or \cite{LQF2}.

The main result of this paper (Theorem~\ref{PLG-Az})
is 
Pfister's local-global principle for nonsingular
hermitian forms over Azumaya algebras with involution over semilocal
commutative rings in which $2$ is a unit. 
(The special case of central simple algebras with involution was treated in
\cite{L-U} and \cite{Sch-1970}; see also \cite{B-U}.)
The  
assumption that the base ring is semilocal 
is minimal
in the sense that Pfister's local-global principle is known to hold for 
nonsingular quadratic forms over such rings, but not in general. We refer
to \cite{baeza} for more details.

Our version of Sylvester's law of inertia (Theorem~\ref{Sylvester-no-pc})
is used in the proof of the main
result, which is inspired by Marshall's proof of Pfister's local-global
principle in the context of abstract Witt rings, cf. \cite{marshall80}. 
A crucial ingredient in our proof is
a certain pairing of forms investigated by Garrel in his 2023 paper
\cite{garrel-2023}.
We were also very fortunate that we could put many results from
the recent papers  by First 
\cite{first23} and Bayer-Fluckiger, First and Parimala \cite{BFP}
to good use.

\section{Preliminaries}

In this paper all rings are assumed unital and associative with $2$ invertible.
We identify quadratic forms over commutative rings with symmetric bilinear
forms, and assume that all fields are of characteristic different from~$2$.
Rings are not assumed to be commutative unless explicitly indicated. 
Our main references for rings with involution and hermitian forms are
\cite{knus91, first23}.

\subsection{Hermitian forms over rings with involution}
\label{hfri}

Let $(A,\s)$ be a ring with involution and let $\ve \in Z(A)$ be such that
$\ve\s(\ve)=1$. We denote the category of $\ve$-hermitian modules over $(A,\s)$
by $\Herm^\ve(A,\s)$. The objects of $\Herm^\ve(A,\s)$ are pairs $(M,h)$, where
$M$ is a finitely generated projective right $A$-module and $h\colon M\x M \to A$ is
an $\ve$-hermitian form. Since we always assume that $2\in A^\x$, all hermitian
modules are even. We denote the category of nonsingular $\ve$-hermitian
modules over $(A,\s)$ (also known as $\ve$-hermitian spaces) by
$\bH^\ve(A,\s)$. The morphisms of $\Herm^\ve(A,\s)$ and $\bH^\ve(A,\s)$ are the
isometries. We denote isometry by $\simeq$. See 
\cite[I, Sections~2 and 3]{knus91} for
more details. If $\ve=1$, we simply write $\Herm(A,\s)$ and $\bH(A,\s)$.
It is common to say hermitian form instead of hermitian module.

We denote the Witt group of nonsingular $\ve$-hermitian forms over $(A,\s)$ by
$W^\ve(A,\s)$ and note that since $2\in A^\x$, metabolic forms are hyperbolic, cf. 
\cite[Section~2.2]{first23} for a succinct presentation.

For $a\in A^\x$, we denote the inner automorphism 
$A\to A,\ x\mapsto axa^{-1}$ by
$\Int(a)$. 
We define $\Sym^\ve(A,\s):=\{x \in A \mid \s(x)=\ve x\}$ and
$\Sym^\ve (A^\x,\s):=\Sym^\ve(A,\s)\cap A^\x$. We also write $\Sym(A,\s)$
instead of $\Sym^1(A,\s)$ and $\Skew(A,\s)$ instead of $\Sym^{-1}(A,\s)$.
For $a_1,\ldots, a_\ell
\in \Sym^\ve(A,\s)$ we denote by $\qf{a_1,\ldots, a_\ell}_\s$
the diagonal  $\ve$-hermitian form 
\[
  A^\ell\x A^\ell \to A,\ (x,y)\mapsto \sum_{i=1}^\ell
  \s(x_i)a_i y_i.
\]

Let $(M, h) \in \Herm^\ve(A,\s)$. We denote by $D_{(A,\s)}(h):=\{h(x,x) \mid x \in M\}$ the set of elements of $A$
represented by $h$.

If $(M,h)\in \bH^\ve(A,\s)$, the \emph{adjoint involution of $h$} is the
involution $\ad_h$ on the ring $\End_A(M)$ implicitly defined by
\begin{equation}\label{ad}
  h(x, \ad_h(f)(y))= h(f(x), y)
\end{equation}
for all $x,y\in M$ and all $f\in \End_A(M)$, cf. \cite[Section~2.4]{first23}. 
\medskip

Consider a second ring with involution $(B,\tau)$
and let $(S,\iota)$  be a commutative ring with involution such that
$(A,\s)$ and $(B,\tau)$ are $(S,\iota)$-algebras with involution in the sense 
of \cite[I, (1.1)]{knus91}, i.e., $A$ and $B$ are
both $S$-algebras and the involutions $\s$ and $\tau$  are compatible
with $\iota$:
\[\s(sa) = \iota(s) \s(a), \ \tau(sb) = \iota(s) \tau(b), \quad \forall a \in A, b \in B, s \in S.\]
Then $(A \ox_S B, \s \ox \tau)$ is an $(S, \iota) $-algebra with involution and,
if $(M_1,h_1)\in \Herm^{\ve_1}(A,\s)$ and $(M_2,h_2)\in \Herm^{\ve_2}(B,\tau)$,
then
\[
  (M_1\ox_S M_2, h_1\ox h_2) \in \Herm^{\ve_1\ve_2}(A\ox_S B,\s\ox \tau).
\]
If $(M_1, h_1)\simeq (M_1', h_1')$ in $\Herm^{\ve_1}(A,\s)$
and $(M_2, h_2)\simeq (M_2', h_2')$ in $\Herm^{\ve_2}(B,\tau)$,
then
\[
  (M_1\ox_S M_2, h_1\ox h_2)\simeq (M_1'\ox_S M_2', h_1'\ox h_2').
\]

Let $(M,h)\in \Herm^{\ve}(A,\s)$ and $(N,\vf)\in \Herm^{\mu}(S,\iota)$. 
Since $(A\ox_S S, \s\ox\iota)\cong (A,\s)\cong (S\ox_S A, \iota\ox\s)$
as rings with involution, it
is not difficult to see that upon identifying $A\ox_S S$ with $A$, 
\[
  h\ox_S \vf (m_1\ox n_1, m_2\ox n_2)= h(m_1,m_2)\vf(n_1,n_2)
\] 
for all $m_1,m_2\in M$ and $n_1,n_2\in N$,
and
\[
  (M\ox_S N, h\ox_S \vf) \simeq (N\ox_S M, \vf \ox_S h)
\]
in $\Herm^{\ve\mu}(A,\s)$.

\begin{lemma}\label{R-Z-isom}
  Let $R$ be a commutative ring, assume that $A$ is an $R$-algebra and that
  $\s$ is an $R$-linear involution on $A$. Let $\iota=\s|_{Z(A)}$.
  Let $u_1, \ldots, u_k \in R$ and $(M,h)\in \Herm^{\ve}(A,\s)$. Then
    \[\qf{u_1,\ldots,u_k}_\iota \ox_{Z(A)} h \simeq \qf{u_1,\ldots,u_k} 
    \ox_R h\]
  (under the canonical identifications  $Z(A)\ox_{Z(A)}A\cong
  R\ox_R A\cong A$).
\end{lemma}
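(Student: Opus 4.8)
The idea is to recognize both sides as the same hermitian form after unwinding the definitions of the scalar extension of a diagonal form and its tensor product with $h$. Write $Z:=Z(A)$ and recall that by hypothesis $A$ is an $R$-algebra with $R$-linear involution $\s$, so the structure map $R\to A$ factors through $Z$ and $\iota=\s|_Z$ satisfies $\iota|_R = \id_R$ (in the sense that $R$ maps into the $\iota$-fixed part of $Z$). First I would make the right-hand side explicit: $\qf{u_1,\dots,u_k}\ox_R h$ is the hermitian form on $R^k\ox_R M \cong M^k$ given by $((x_i)_i,(y_i)_i)\mapsto \sum_{i=1}^k u_i\, h(x_i,y_i)$, using that the $u_i\in R$ are central and $\s$-symmetric (indeed $\s$-fixed), so they are legitimate entries of a diagonal form and commute past everything.

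Next I would make the left-hand side explicit. The form $\qf{u_1,\dots,u_k}_\iota$ lives in $\Herm(Z,\iota)$ on the module $Z^k$, with $((a_i)_i,(b_i)_i)\mapsto \sum_i \iota(a_i)u_i b_i$; here the $u_i$, being in $R\subseteq \Sym(Z,\iota)$, are valid diagonal entries. Then $\qf{u_1,\dots,u_k}_\iota \ox_{Z} h$ is a hermitian form over $(Z\ox_Z A,\iota\ox\s)\cong(A,\s)$ on the module $Z^k\ox_Z M\cong M^k$, and under the canonical identification $Z\ox_Z A\cong A$ described in the preliminaries (where $a\ox_Z \vf$-type tensors multiply as $h(m_1,m_2)\vf(n_1,n_2)$, here with the roles of the scalar and the algebra factor swapped), it evaluates on $(a_i\ox x_i)_i$ and $(b_j\ox y_j)_j$ to $\sum_i \s(\text{image of }a_i)\, h(x_i,y_i)\, (\text{image of }b_i)$... — more simply, choosing the standard basis of $Z^k$ it becomes $((x_i)_i,(y_i)_i)\mapsto \sum_i u_i\, h(x_i,y_i)$ again, since $\iota(1)u_i 1 = u_i$.

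The two explicit descriptions coincide, so the required isometry is the identity map $M^k\to M^k$ after the canonical module identifications $R^k\ox_R M\cong M^k\cong Z^k\ox_Z M$; one only needs to check these identifications are compatible, i.e. that the square relating $R^k\ox_R M$, $Z^k\ox_Z M$, and $M^k$ commutes, which is immediate from functoriality of tensor products along $R\to Z$. The main (and only) obstacle is bookkeeping: one must be careful that the entries $u_i$ really are admissible as diagonal entries on both sides (they are, since $R\to Z$ lands in $\Sym(Z,\iota)$ because $\s$ is $R$-linear, hence fixes $R$), and that the canonical ring isomorphism $Z\ox_Z A\cong A$ used to interpret the left-hand tensor product is the same one implicitly used throughout, so that the scalars $u_i$ are transported to themselves rather than to $\iota(u_i)=u_i$ (no issue, as they are symmetric). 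No deep input is needed; the lemma is essentially an associativity/base-change identity for tensor products of hermitian forms, recorded here for later use.
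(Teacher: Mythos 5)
Your proposal is correct and is essentially the paper's own argument: both come down to the explicit description (given just before the lemma) of tensoring $h$ with a diagonal form over $R$, respectively over $(Z(A),\iota)$, and the canonical identifications $R^k\ox_R M\cong M^k\cong Z(A)^k\ox_{Z(A)}M$, under which both sides become $((x_i),(y_i))\mapsto\sum_i u_i\,h(x_i,y_i)$. The paper merely packages this same computation as a reduction to rank one together with the isometries $\qf{u}_\iota\simeq\qf{u}\ox_R\qf{1}_\iota$ and $\qf{1}_\iota\ox_{Z(A)}h\simeq h$ and associativity of the tensor product.
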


\begin{proof}
  It suffices to show that $\qf{u}_\iota \ox_{Z(A)} h\simeq \qf{u}\ox_R h$
  for $u\in R$. This follows from the isometries 
  $\qf{u}_\iota \simeq \qf{u} \ox_R \qf{1}_\iota$, 
  $\qf{1}_\iota \ox_{Z(A)} h \simeq h$
  (which are straightforward, using the observations preceding the lemma)
  and associativity of the tensor product.  
\end{proof}

We finish this section with a well-known result for which we could not find
a reference:

\begin{lem}\label{adj-prod}
  Let $R$ be a commutative ring, assume that $A$ and $B$ are $R$-algebras 
  and that
  $\s$ and $\tau$ are $R$-linear involutions on $A$ and $B$, respectively.
  Let $(M,\vf)\in \bH^{\ve_1}(A,\s)$ and $(N,\psi) \in \bH^{\ve_2}(B,\tau)$. 
  Then the map
  \[
    \xi\colon  \End_A(M) \ox_R \End_B(N) \to \End_{A\ox_R B}(M\ox_R N)
  \]
  induced by $\xi(f\ox g)=[x\ox y \mapsto f(x)\ox g(y)]$ yields an isomorphism
  of $R$-algebras with involution 
  \[
    (\End_A(M) \ox_R \End_B(N), \ad_\vf\ox \ad_\psi) \cong
    (\End_{A\ox_R B}(M\ox_R N), \ad_{\vf\ox\psi}).
  \]
\end{lem}

\begin{proof}
  The map $\xi$ is an isomorphism of $R$-algebras by 
  \cite[Theorem~1.3.26 and Corollary~1.3.27]{ford17}. To finish the proof
  it suffices to check that $\ad_{\vf\ox\psi}(\xi(f\ox g))=\xi(\ad_\vf(f)\ox 
  \ad_\psi(g))$ using the definition of adjoint involution \eqref{ad}, which
  is a straightforward computation.
\end{proof}

\subsection{Quadratic \'etale  algebras}\label{sec:quadalg}

Let $R$ be a commutative ring and 
let $S$ be a quadratic \'etale $R$-algebra. 
We recall some results from \cite[Section~1.3]{first23} and 
\cite[I, (1.3.6)]{knus91}.
The algebra $S$ has a unique standard involution $\vt$,
and the trace $\Tr_{S/R}$ satisfies 
\begin{equation}\label{eq:tr}
  \Tr_{S/R}(x)= \vt(x)+x\quad \text{for all $x\in S$.}
\end{equation}

Furthermore, $\Tr_{S/R}$ is an involution trace for $\vt$ (cf.
\cite[I, Proposition~7.3.6]{knus91}) and thus if $h \in \Herm(S,\vt)$ is
nonsingular, then $\Tr_{S/R}(h)$ is nonsingular by 
\cite[I, Proposition~7.2.4]{knus91}. Furthermore, if $h$ is 
hyperbolic, then so is the quadratic form $\Tr_{S/R}(h)$ by the first
paragraph of \cite[p.~41]{knus91}.
The converse holds if $R$ is semilocal by \cite[Corollary~8.3]{first23}.

If $R$ is connected and $S$ is not connected, then $S\cong R\x R$ as
$R$-algebras, and $\vt\colon  (x,y)\mapsto (y,x)$ is the exchange involution.

If $R$ is semilocal, then there exists $\lambda\in S$ such that $\lambda^2 \in
R^\x$, $\vt(\lambda)=-\lambda$, and $\{1,\lambda\}$ is an $R$-basis of 
$S$, cf. \cite[Lemma~1.19]{first23}.

\subsection{Azumaya algebras with involution}
Let $R$ be a commutative ring.
Recall from \cite[III, (5.1)]{knus91}
that an $R$-algebra $A$ is an 
\emph{Azumaya $R$-algebra} if $A$ is a faithful finitely generated projective $R$-module and the 
 \emph{sandwich map} 
\begin{equation}\label{swm}
\sw\colon  A\ox_R  A^{\op}\to \End_R(A),\ a\ox b^\op\mapsto [x\mapsto axb]
\end{equation}
is an isomorphism
of $R$-algebras. Here $A^\op$ denotes the \emph{opposite algebra} of $A$,
which coincides with $A$ as an $R$-module, but with twisted multiplication
$a^\op b^\op=(ba)^\op$. It is clear that $A^\op$ is also an Azumaya $R$-algebra.

The centre $Z(A)$ is equal to $R$ and $\End_R(A)$ is again an Azumaya
$R$-algebra. More generally, if $M$ is a faithful finitely generated projective 
right $R$-module, then $\End_R(M)$ is an Azumaya $R$-algebra.
If $A$ and $B$ are Azumaya $R$-algebras, their tensor product $A\ox_R B$ is
again an Azumaya $R$-algebra.

First's paper \cite{first23} contains a wealth of information about Azumaya 
algebras, with and without involution, 
and we refer to it for a number of definitions and results that we recall
in the remainder of this section.

\begin{prop}[{\cite[Proposition~1.1]{first23}}]\label{prop:Az}
  $A$ is Azumaya over $Z(A)$ and $Z(A)$ is finite \'etale over $R$ if and only
  if $A$ is projective as an $R$-module and separable as an $R$-algebra.
\end{prop}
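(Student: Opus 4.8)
This is a standard fact from the theory of separable algebras over
  commutative rings (Auslander--Goldman in the Noetherian case,
  DeMeyer--Ingraham in general; see also \cite[Chapter~III, Section~5]{knus91}
  and \cite[Section~1]{first22}), so the plan is to assemble it from the basic
  machinery rather than build everything from scratch. Two general principles
  carry the argument: \emph{transitivity of separability} (if a commutative
  $R$-algebra $S$ is separable over $R$ and an $R$-algebra $A$ with
  $S\subseteq Z(A)$ is separable over $S$, then $A$ is separable over $R$), and
  \emph{transitivity of ``finitely generated projective''} (a composite of two
  finite projective ring extensions is finite projective). I would also use the
  characterisation that an $R$-algebra is Azumaya over $R$ exactly when it has
  centre $R$, is faithful, is finitely generated projective as an $R$-module,
  and is separable over $R$.

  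\emph{The ``if'' direction.} Assume $A$ is Azumaya over $Z(A)$ and $Z(A)$ is
  finite \'etale over $R$. By definition of Azumaya, $A$ is finitely generated
  projective over $Z(A)$; moreover Azumaya algebras are separable over their
  centre (the sandwich isomorphism $\sw\colon
  A\ox_{Z(A)}A^{\op}\to\End_{Z(A)}(A)$ being used to produce a separability
  idempotent), cf. \cite[Chapter~III, Section~5]{knus91}. Since $Z(A)$ is
  finite \'etale over $R$, it is finitely generated projective and separable
  over $R$. Transitivity of ``finitely generated projective'' then gives that
  $A$ is finitely generated projective over $R$, and transitivity of
  separability gives that $A$ is separable over $R$.

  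\emph{The ``only if'' direction.} Assume $A$ is finitely generated projective
  and separable over $R$. First, the centre of a separable $R$-algebra is
  separable over $R$, so $Z(A)$ is separable over $R$. Next, given a
  separability idempotent $\sum_i x_i\ox y_i\in A\ox_R A^{\op}$ of $A$ over $R$,
  the $R$-linear map $\pi\colon A\to A$, $a\mapsto\sum_i x_i a y_i$, has image
  in $Z(A)$ and restricts to the identity on $Z(A)$; it is therefore an
  $R$-module retraction of the inclusion $Z(A)\hookrightarrow A$, so $Z(A)$ is a
  direct summand of the finitely generated projective $R$-module $A$, hence
  itself finitely generated projective over $R$, and thus finite \'etale over
  $R$. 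It remains to show that $A$ is Azumaya over $Z(A)$. It is faithful over
  $Z(A)$ because $Z(A)\hookrightarrow A$ as unital rings, and finitely generated
  over $Z(A)$ because it is so over $R$. Since $A$ and $Z(A)$ are both separable
  over $R$, the algebra $A$ is separable over $Z(A)$ (combine base change along
  $R\to Z(A)$, which makes $A\ox_R Z(A)$ separable over $Z(A)$, with the split
  surjection of $Z(A)$-algebras $A\ox_R Z(A)\to A$ induced by $Z(A)\subseteq
  A$); finally, a separable algebra that is finitely generated as a module over
  its centre is projective over it (check locally, where separability makes the
  residue fibres semisimple and Nakayama lifts a basis). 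By the characterisation
  of Azumaya algebras recalled above this gives that $A$ is Azumaya over $Z(A)$;
  equivalently, one verifies that $\sw$ over $Z(A)$ is an isomorphism by
  localising $Z(A)$ at each of its primes and reducing to the classical case of
  a separable algebra over a field.

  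\emph{Main obstacle.} All the subtlety sits in the ``only if'' direction, and
  more precisely in establishing the \emph{projectivity} of $Z(A)$ over $R$
  (not just finite generation) without a Noetherian hypothesis, and in
  controlling the sandwich map over $Z(A)$. These are exactly the points that
  the structure theory of separable algebras is designed to handle --- the
  direct-summand argument above for the centre and the fibrewise-plus-Nakayama
  reduction for the sandwich map --- so in practice I would invoke
  \cite[Chapter~III, Section~5]{knus91} and \cite[Section~1]{first22} for them
  rather than reprove them.
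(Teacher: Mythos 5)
The paper offers no argument of its own for this statement --- it is quoted directly from \cite{first22}, Proposition~1.1 --- so the only meaningful comparison is with the standard structure theory you are reassembling, and your skeleton (separability idempotent giving a retraction of $A$ onto $Z(A)$, transitivity of separability and of finite projectivity, central separable $\Rightarrow$ Azumaya) is indeed the classical DeMeyer--Ingraham/Knus--Ojanguren route; the ``if'' direction is fine as written.

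There is, however, one genuine mismatch with the statement as given. In the ``only if'' direction the hypothesis is only that $A$ is \emph{projective} as an $R$-module, but you begin by assuming it is \emph{finitely generated} projective, and both places where finiteness enters in your sketch (that $Z(A)$ is a finite $R$-module, and that $A$ is finite over $Z(A)$) are made to rest on that extra assumption. The finiteness is precisely part of the content of the proposition, so it has to be produced rather than assumed. The standard repair: a separable algebra is finitely generated projective over its centre with no projectivity hypothesis over $R$ at all (Villamayor--Zelinsky; this is the same theorem you are already invoking to get projectivity of $A$ over $Z(A)$, so quote it in full strength), and the commutative algebra $Z(A)$ --- separable over $R$ and, by your retraction $\pi$, an $R$-module direct summand of the projective module $A$, hence projective over $R$ --- is then finitely generated over $R$ by the classical lemma that a separable algebra which is projective as a module over the base is automatically module-finite (DeMeyer--Ingraham, Proposition~II.2.1). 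With that lemma cited the argument closes. A smaller point: ``Nakayama lifts a basis'' only yields a surjection from a free module, not projectivity of $A$ over $Z(A)$; that step really is the Villamayor--Zelinsky/Auslander--Goldman theorem and should simply be cited, as you in effect concede in your final paragraph.
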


Since the behaviour of the involution on the centre plays an important role in
the study of algebras with involution, this result helps motivate the following:

\begin{defi}[{\cite[Section~1.4]{first23}}]\label{first-def}  
  We say that $(A,\s)$ is an \emph{Azumaya
  algebra with involution over $R$} if the following conditions hold:
  \begin{itemize}
    \item $A$ is an $R$-algebra with $R$-linear involution $\s$;
    \item $A$ is separable projective over $R$;
    \item the homomorphism $R \rightarrow A$, $r \mapsto r \cdot 1_A$ identifies
      $R$ with $\Sym(Z(A),\s)$.
  \end{itemize}
\end{defi}

Let $(A,\s)$ be an Azumaya algebra with involution over $R$.
Note that $A$ is Azumaya over $Z(A)$ by Proposition~\ref{prop:Az}, 
but may not be  Azumaya over $R$.
Indeed, ``Azumaya algebra with involution'' means ``Azumaya 
algebra-with-involution'' rather than ``Azumaya-algebra with involution''.

The following lemma makes the connection between Definition~\ref{first-def}
and a different definition of Azumaya algebra with involution
(the first sentence of Lemma~\ref{twodefs})
that is introduced in \cite[Section~4]{OP01}:

\begin{lemma}\label{twodefs}
  Let $A$ be an $R$-algebra with $R$-linear involution such that 
  $A$ is an Azumaya algebra 
  over $Z(A)$, $Z(A)$ is $R$ or a quadratic \'etale extension of 
  $R$, and $R = \Sym(Z(A),\s)$. Then $(A,\s)$ is an Azumaya algebra with involution
  over $R$.
  
  The converse holds if $R$ is connected.
\end{lemma}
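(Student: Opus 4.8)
The plan is to show that the hypotheses of the lemma are precisely the three bullet points of Definition~\ref{first-def}, once translated via Proposition~\ref{prop:Az}. First I would establish the forward direction: given that $A$ is Azumaya over $Z(A)$, that $Z(A)$ is either $R$ or quadratic \'etale over $R$, and that $R=\Sym(Z(A),\s)$, I must check that $A$ is separable projective over $R$ (the $R$-linearity of $\s$ is already assumed, and the centre condition $R=\Sym(Z(A),\s)$ is shared verbatim). Since $Z(A)$ is $R$ or quadratic \'etale over $R$, it is in particular finite \'etale over $R$, so Proposition~\ref{prop:Az} applies in its ``only if'' direction: $A$ Azumaya over $Z(A)$ together with $Z(A)$ finite \'etale over $R$ gives exactly that $A$ is projective as an $R$-module and separable as an $R$-algebra. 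That settles the second bullet. The third bullet is the hypothesis $R=\Sym(Z(A),\s)$ itself. Hence $(A,\s)$ is an Azumaya algebra with involution over $R$, with no connectedness needed.

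For the converse, suppose $(A,\s)$ is an Azumaya algebra with involution over $R$ and $R$ is connected. Here $A$ is separable projective over $R$, so by the ``if'' direction of Proposition~\ref{prop:Az}, $A$ is Azumaya over $Z(A)$ and $Z(A)$ is finite \'etale over $R$; and $R=\Sym(Z(A),\s)$ by the third bullet. The only thing left is to upgrade ``$Z(A)$ is finite \'etale over $R$'' to ``$Z(A)$ is $R$ or a quadratic \'etale extension of $R$''. This is where connectedness of $R$ enters and where the main work lies. The restriction $\iota:=\s|_{Z(A)}$ is an $R$-linear involution on the finite \'etale $R$-algebra $Z(A)$ with fixed ring exactly $R$. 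I would argue that $Z(A)$ has constant rank over $R$ (rank is locally constant, hence constant on the connected $\Spec R$), and that this rank is $1$ or $2$: indeed, étale-locally $Z(A)$ splits as a product of copies of (extensions of) $R$, and the involution $\iota$ must pair these up or fix them, while $R=\Sym(Z(A),\iota)$ forces at most one fixed factor and at most one swapped pair, bounding the rank by $2$. If the rank is $1$, then $Z(A)=R$; if the rank is $2$, then $Z(A)$ is quadratic \'etale over $R$ by definition.

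The main obstacle is this last rank computation: making rigorous the claim that an $R$-linear involution on a finite \'etale algebra with fixed ring $R$ forces the rank to be $\le 2$ when $R$ is connected. I expect the cleanest route is to pass to the geometric fibres — for a point $\mathfrak p\in\Spec R$ with algebraically closed residue field, $Z(A)\ox_R \kappa(\mathfrak p)$ is a product of $n$ copies of $\kappa(\mathfrak p)$ with an involution, whose fixed subalgebra has $\kappa(\mathfrak p)$-dimension $\lceil n/2\rceil$ plus (number of fixed factors); compatibility with $R=\Sym(Z(A),\s)$ after base change forces $\lceil n/2\rceil + (\text{fixed factors}) = 1$, hence $n\le 2$ — and then invoke that $\Spec R$ connected makes $n$ independent of $\mathfrak p$. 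An alternative is to cite the structure theory of quadratic étale algebras and their involutions directly from \cite{knus91} or \cite{first22}; in the write-up I would look for the most economical reference rather than reproving it. One should also note explicitly why connectedness is genuinely needed for the converse: without it, $\Spec R$ could have components of differing ranks (e.g. one where $Z(A)=R$ and another where $Z(A)$ is a genuine quadratic extension), so $Z(A)$ would be neither globally $R$ nor globally a \emph{quadratic} étale extension, even though it is still finite étale.
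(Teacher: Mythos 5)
Your proposal is correct and follows the same skeleton as the paper: the forward direction is exactly the paper's (it is immediate from Proposition~\ref{prop:Az}, since ``$R$ or quadratic \'etale over $R$'' is in particular finite \'etale over $R$), and the converse likewise starts from Proposition~\ref{prop:Az}. The only divergence is the key input for the converse: where you sketch a direct proof that $Z(A)$ must be $R$ or quadratic \'etale, the paper simply cites \cite[Proposition~1.21(iii)]{first22} --- the ``economical reference'' you anticipated as an alternative. Your direct argument does go through, but two details should be made explicit if you keep it. First, to know that the fixed subalgebra of $Z(A)\ox_R\kappa(\fp)$ is $\kappa(\fp)$ you need $\Sym$ to commute with base change; this is automatic here because $2\in R^\x$ gives $Z(A)=\Sym(Z(A),\iota)\oplus\Skew(Z(A),\iota)$ as $R$-modules (or cite Proposition~\ref{change-of-base}, which asserts exactly this for any commutative $R$-algebra $T$). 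Second, your fibrewise dimension count is slightly off: if the involution on $\kappa(\fp)^n$ fixes $f$ factors and swaps $s$ pairs (so $f+2s=n$), the fixed subalgebra has dimension $f+s$, not $\lceil n/2\rceil$ plus the number of fixed factors; setting $f+s=1$ gives $(f,s)\in\{(1,0),(0,1)\}$, hence $n\le 2$, and connectedness of $\Spec R$ makes the rank constant, so $Z(A)=R$ (rank $1$: the skew summand is projective of rank $0$, hence zero) or $Z(A)$ is quadratic \'etale (rank $2$). With these points fixed, your route is a self-contained replacement for the citation, at the cost of reproving a result already available in \cite{first22}.
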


\begin{proof}
  The first statement is a direct consequence of 
  Proposition~\ref{prop:Az}.   
  For the converse, assume that $R$ is connected and that $(A,\s)$ is an 
  Azumaya algebra with involution over $R$. 
  Then $A$ is Azumaya over $Z(A)$ by Proposition~\ref{prop:Az}, and  
  $Z(A)$ is $R$ or a
  quadratic \'etale extension of $R$ by \cite[Proposition~1.21]{first23}.
\end{proof}

\begin{rem}
  If $R=F$ is 
  actually a field, then  $(A,\s)$ is 
  an Azumaya algebra with involution over $R$ if and only if it
  is a \emph{central simple $F$-algebra with involution} in the sense 
  of \cite[Sections~2.A and 2.B]{BOI}.    
\end{rem}

We recall the following, proved in \cite[second paragraph of Section~1.4]{first23}:
\begin{prop}[Change of base ring]\label{change-of-base}
  Let $T$ be a commutative $R$-algebra. Then  $Z(A \ox_R T) = Z(A) \ox_R T$
  and $(A \ox_R T, \s \ox \Id)$ is an 
  Azumaya algebra with
  involution over $T$ (and in particular $Z(A \ox_R T) \cap
  \Sym(A \ox_R T, \s \ox \id) = T$).
\end{prop}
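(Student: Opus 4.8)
The plan is to verify the three defining conditions of Definition~\ref{first-def} for $(A\ox_R T,\s\ox\id)$ over $T$, computing the centre $Z(A\ox_R T)$ along the way. Write $C:=Z(A)$ and $\iota:=\s|_C$. That $\s\ox\id$ is a $T$-linear involution on the $T$-algebra $A\ox_R T$ is a routine verification.

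For the centre, and for separability and projectivity, I would work over $C$ rather than over $R$, since $A$ need not be Azumaya over $R$. By Proposition~\ref{prop:Az}, $A$ is Azumaya over $C$ and $C$ is finite \'etale over $R$. There is a natural isomorphism of $(C\ox_R T)$-algebras $A\ox_R T\cong A\ox_C(C\ox_R T)$, and since Azumaya algebras are stable under base change of the (commutative) base ring, $A\ox_R T$ is Azumaya over $C\ox_R T$. Hence $Z(A\ox_R T)=C\ox_R T=Z(A)\ox_R T$, and on this centre the involution $\s\ox\id$ restricts to $\iota\ox\id$ because $\s|_C=\iota$. Moreover $C\ox_R T$ is finite \'etale over $T$, as finite \'etale algebras are stable under base change, so a second application of Proposition~\ref{prop:Az} (now in the direction from the Azumaya/\'etale data to projectivity and separability) shows that $A\ox_R T$ is separable and projective over $T$.

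It remains to check that the structure map $T\to A\ox_R T$ identifies $T$ with $\Sym(Z(A\ox_R T),\s\ox\id)=\Sym(C\ox_R T,\iota\ox\id)$. Here the hypothesis $2\in R^\x$ enters: writing $c=\tfrac{1}{2}(c+\iota(c))+\tfrac{1}{2}(c-\iota(c))$ and using $R=\Sym(C,\iota)$ gives a decomposition of $R$-modules $C=R\oplus\Skew(C,\iota)$; tensoring with $T$ yields $C\ox_R T=T\oplus(\Skew(C,\iota)\ox_R T)$, on which $\iota\ox\id$ acts as the identity on $T$ and as $-\id$ on the second summand. Since $2\in T^\x$, the $(-\id)$-fixed part of the second summand is $0$, so $\Sym(C\ox_R T,\iota\ox\id)=T$. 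These same splittings, together with the fact that $C=Z(A)$ is an $R$-module direct summand of $A$ (indeed a $C$-module direct summand: $A/C$ is finitely presented, and Zariski-locally over $\Spec R$ the algebra $A$ is a matrix algebra over $C$ in which the scalars split off, so $A/C$ is locally free and hence projective), show that $T=R\ox_R T\to A\ox_R T$ is split injective, so the identification is genuine. With the three conditions of Definition~\ref{first-def} verified and $Z(A\ox_R T)=Z(A)\ox_R T$ established, the proposition follows.

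The only point that needs care is to base change over $C=Z(A)$ rather than over $R$, and to invoke the stability of Azumaya and of finite \'etale algebras under base change; once that is set up, everything else is bookkeeping organised around the assumption $2\in R^\x$, and I do not foresee a genuine obstacle.
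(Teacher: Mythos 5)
Your argument is correct, and it is worth noting that the paper does not prove this proposition at all: it simply cites the second paragraph of Section~1.4 of First's paper. Your direct verification is essentially the standard one and matches what that reference does in spirit: base-change $A$ over $C=Z(A)$ rather than over $R$ (so that stability of Azumaya algebras under base change gives $Z(A\ox_R T)=C\ox_R T$), use stability of finite \'etale algebras under base change together with Proposition~\ref{prop:Az} to get separability and projectivity over $T$, and use $2\in R^\x$ to split $C=R\oplus\Skew(C,\iota)$ and hence compute $\Sym(C\ox_R T,\iota\ox\id)=T$. All of these steps are sound.

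One side remark in your last paragraph is wrong as stated, though it does not damage the proof: an Azumaya algebra is \emph{not} Zariski-locally a matrix algebra over its centre (a nonsplit quaternion algebra over a field already fails this); matrix form is only available \'etale- or Henselian-locally. The fact you wanted, that $C$ is a $C$-module direct summand of $A$, is nevertheless true (one has the standard decomposition $A=Z(A)\oplus[A,A]$ for Azumaya algebras), but you do not need it at all: injectivity of $T\to A\ox_R T$ already follows from what you established, since $T\to C\ox_R T$ is split injective by your symmetric/skew splitting, and $C\ox_R T\to A\ox_R T$ is injective because $A\ox_R T$ is a faithful (finitely generated projective) module over $C\ox_R T$, this being part of the Azumaya base-change statement identifying $C\ox_R T$ with the centre. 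With that parenthetical either deleted or replaced by the commutator decomposition, your proof is a complete and correct substitute for the citation.
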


\begin{cor}\label{change-of-base-bis}
  Let $T$ be a commutative $R$-algebra which is also a field. Then either $Z(A
  \ox_R T)=T$ or $Z(A \ox_R T)$ is a quadratic \'etale extension of $T$, and
  $(A\ox_R T, \s\ox\Id)$ is a central simple $T$-algebra with involution.
\end{cor}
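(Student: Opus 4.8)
The plan is to obtain this as a direct consequence of Proposition~\ref{change-of-base} together with Lemma~\ref{twodefs} (and its ingredient \cite[Proposition~1.21(iii)]{first22}). First I would apply Proposition~\ref{change-of-base} with the commutative $R$-algebra $T$, which is here a field, to conclude that $(A\ox_R T, \s\ox\Id)$ is an Azumaya algebra with involution over $T$ in the sense of Definition~\ref{first-def}, with centre $Z(A\ox_R T)=Z(A)\ox_R T$ and with $Z(A\ox_R T)\cap\Sym(A\ox_R T,\s\ox\id)=T$. In particular $T=\Sym(Z(A\ox_R T),\s\ox\id)$, so the third bullet of Definition~\ref{first-def} holds over the base field $T$.

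Next, since $T$ is a field it is connected, so I can invoke the converse direction of Lemma~\ref{twodefs}: an Azumaya algebra with involution over a connected base ring $T$ satisfies that $A\ox_R T$ is Azumaya over its centre $Z(A\ox_R T)$ (via Proposition~\ref{prop:Az}) and that $Z(A\ox_R T)$ is either $T$ itself or a quadratic \'etale extension of $T$ (via \cite[Proposition~1.21(iii)]{first22}). This gives the dichotomy $Z(A\ox_R T)=T$ or $Z(A\ox_R T)/T$ quadratic \'etale asserted in the statement.

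Finally I would observe that over a field, ``Azumaya over the centre'' means ``central simple'' in the classical sense: a $T$-algebra that is Azumaya over a field extension $Z$ of $T$ with $Z$ equal to $T$ or quadratic \'etale over $T$, carrying a $T$-linear involution with $T=\Sym(Z,\s\ox\id)$, is precisely a central simple $T$-algebra with involution in the sense of \cite[2.A, 2.B]{BOI} — this is exactly the content of the Remark following Lemma~\ref{twodefs} specialised to the base field $T$. Assembling these three observations yields both conclusions of the corollary. The only mild subtlety is bookkeeping: one must make sure the hypotheses of Lemma~\ref{twodefs}'s converse are met for the pair $(A\ox_R T,\s\ox\id)$ over $T$ rather than over $R$, but this is immediate from Proposition~\ref{change-of-base}, so I do not expect any real obstacle here.
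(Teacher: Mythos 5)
Your proposal is correct and follows the same route the paper intends: Corollary~\ref{change-of-base-bis} is stated without proof precisely because it is the combination of Proposition~\ref{change-of-base} (giving an Azumaya algebra with involution over the field $T$), the converse part of Lemma~\ref{twodefs} with \cite[Proposition~1.21(iii)]{first22} applied over the connected ring $T$ (giving the dichotomy for the centre), and the Remark following Lemma~\ref{twodefs} identifying Azumaya algebras with involution over a field with central simple algebras with involution. No gaps; your bookkeeping point about applying the lemma over $T$ rather than $R$ is handled exactly as you say.
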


\begin{rem}\label{rem-first-0}
  Let $S$ be a quadratic \'etale $R$-algebra with standard
  involution $\vt$. 
  If $T$ is a commutative $R$-algebra,
  then $S \ox_R T$ is a quadratic \'etale $T$-algebra, cf. 
  \cite[Propositions~2.3.4 and 9.2.5]{ford17}.

  Moreover, $(S,\vt)$ is an Azumaya algebra with involution over $R$ in
  the sense of \cite[Section~4]{OP01} (i.e., it has the properties listed
  in the first sentence of Lemma~\ref{twodefs}), 
  and therefore an Azumaya algebra with involution
  over $R$ in the sense of \cite{first23} (i.e., in the sense of this paper),
  by Lemma~\ref{twodefs}. Therefore $T = \Sym(S \ox_R T, \vt \ox \id)$
  by Proposition~\ref{change-of-base}, and it follows that $\vt \ox \id$ is
  the standard involution of $S \ox_R T$. Then \eqref{eq:tr}
  applies and yields, for all $x\in S$,
  \[
    \Tr_{S \ox_R T / T}(x \ox 1) =\Tr_{S/R}(x) \ox 1.
  \]
\end{rem}

We collect some results about ideals of $A$:
\begin{prop}\label{first-ideals} The following properties hold:
  \begin{enumerate}
    \item\label{first-ideals1} If $I$ is an ideal of $R$, then $A \ox_R (R/I) \cong A/AI$;
    \item $J(A) = A \cdot J(R)$, where $J$ denotes the Jacobson radical;
    \item If $a \in A$ is such that $a \in (A/A \mathfrak{m})^\x$ for every 
    maximal ideal
      $\mathfrak{m}$ of $R$, then $a \in A^\x$.
  \end{enumerate}
\end{prop}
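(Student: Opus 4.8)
The three parts are handled separately; only~(2) requires real work.

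Part~(1) is the right exactness of $A\ox_R(-)$: tensoring $0\to I\to R\to R/I\to 0$ over $R$ with $A$ gives an exact sequence $A\ox_R I\to A\to A\ox_R R/I\to 0$, whose first map has image $AI$. Since $I\subseteq R=\Sym(Z(A),\s)\subseteq Z(A)$, the submodule $AI=IA$ is a two-sided ideal, and the induced map identifies $A\ox_R R/I$ with the quotient ring $A/AI$.

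For part~(2) I would prove the two inclusions. For $A\cdot J(R)\subseteq J(A)$: because $J(R)$ is central, $A\cdot J(R)$ is a two-sided ideal, so it is enough that it annihilates every simple right $A$-module $S$. Such an $S$ is cyclic over $A$, hence finitely generated over $R$ (as $A$ is), and $J(R)S$ is an $A$-submodule of $S$ by centrality; Nakayama's lemma then forces $J(R)S=0$ whenever $S\neq 0$. For the reverse inclusion, fix a maximal ideal $\fm$ of $R$. By~(1) we have $A/\fm A\cong A\ox_R k(\fm)$, which is a central simple algebra over the field $k(\fm)$ by Corollary~\ref{change-of-base-bis}, hence has trivial Jacobson radical; since a ring surjection carries the Jacobson radical into the Jacobson radical, the image of $J(A)$ in $A/\fm A$ vanishes, i.e.\ $J(A)\subseteq\fm A$. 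Thus $J(A)\subseteq\bigcap_\fm\fm A$, and it remains to check $\bigcap_\fm\fm A=A\cdot J(R)$ (the inclusion $\supseteq$ being clear). For this I would write $A$ as a direct summand of a finitely generated free module $F=R^n$, with complementary summand $A'$, so that $\fm F=\fm A\oplus\fm A'$ for every $\fm$; then on one hand $\bigcap_\fm\fm F=(\bigcap_\fm\fm)^n=J(R)F=J(R)A\oplus J(R)A'$, and on the other hand $\bigcap_\fm\fm F=(\bigcap_\fm\fm A)\oplus(\bigcap_\fm\fm A')$. Since $J(R)A\subseteq\bigcap_\fm\fm A$ and $J(R)A'\subseteq\bigcap_\fm\fm A'$, comparing the two decompositions forces $\bigcap_\fm\fm A=J(R)A$. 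This passage from pointwise vanishing over the fibres $A\ox_R k(\fm)$ to a global statement is where projectivity of $A$ over $R$ is genuinely used, and it is the main obstacle.

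For part~(3), observe that $a\in A^\x$ if and only if left multiplication $\lambda_a\colon A\to A$ is a bijective $R$-linear endomorphism: if $\lambda_a$ is bijective then $ab=1$ for $b:=\lambda_a^{-1}(1)$, and then $\lambda_b=\lambda_a^{-1}$ gives $ba=1$ as well. Since $A$ is finitely generated over $R$, such a surjective $\lambda_a$ is automatically injective, and applying Nakayama's lemma to the finitely generated cokernel of $\lambda_a$ shows that $\lambda_a$ is surjective if and only if $\lambda_a\ox_R\id_{k(\fm)}$ is surjective for every maximal ideal $\fm$ of $R$. Under the identification $A\ox_R k(\fm)\cong A/A\fm$ from~(1), this last map is left multiplication by the image $\bar a$ of $a$, which is bijective precisely because $\bar a\in(A/A\fm)^\x$ by hypothesis; hence $\lambda_a$ is bijective and $a\in A^\x$. (When $R$ is semilocal this can instead be deduced from~(2): there $A/A\cdot J(R)\cong\prod_\fm A/A\fm$, so $a$ is a unit modulo $J(A)=A\cdot J(R)$, hence a unit.)
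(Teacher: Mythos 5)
Your proposal is correct, but it takes a genuinely different route from the paper, which simply declares (1) well known and cites First \cite{first22} (Lemmas~1.5 and~1.6) for (2) and (3); you instead give self-contained proofs. Your arguments check out: (1) is the usual right-exactness computation; in (2) the inclusion $A \cdot J(R) \subseteq J(A)$ via Nakayama applied to simple right $A$-modules (finitely generated over $R$ because $A$ is) is fine, and your reduction of $\bigcap_\fm \fm A = J(R)A$ to the free case by splitting $A$ off a finite free $R$-module is a clean, explicit use of projectivity; (3) via surjectivity of $\lambda_a$ detected on the fibres together with ``a surjective endomorphism of a finitely generated module is injective'' is likewise correct. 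What the paper's citation buys is brevity; what your argument buys is transparency about exactly which hypotheses are used. Two small points you should make explicit. First, you repeatedly use that $A$ is finitely generated projective as an $R$-module; this is not literally part of Definition~\ref{first-def}, but follows from Proposition~\ref{prop:Az} ($A$ is finitely generated projective over $Z(A)$, which is finite \'etale, hence finitely generated projective, over $R$). Second, in (2) the fibre $A \ox_R R/\fm$ need not be central simple \emph{over} $R/\fm$: in the unitary case its centre can be a quadratic field extension of $R/\fm$ or even $R/\fm \x R/\fm$. Corollary~\ref{change-of-base-bis} still yields a semisimple algebra in every case (or, more directly, separability passes to the base change and a finite-dimensional separable algebra over a field is semisimple), so $J(A \ox_R R/\fm) = 0$, which is all your argument actually needs; just phrase it that way rather than as central simplicity.
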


\begin{proof}
  (1) This is well-known.
  (2) Is \cite[Lemma~1.5]{first23}, since $A$ is a separable projective
      $R$-algebra.
  (3) Is \cite[Lemma~1.6]{first23}, using item \eqref{first-ideals1}.
\end{proof}

\begin{defi}[See {\cite[Section~1.4]{first23}}]
  We say that the involution $\s$ on $A$ 
  is \emph{of orthogonal, symplectic or unitary type at 
  $\fp\in \Spec R$} if 
  $(A \ox_R \Qf (R/\fp), \s \ox \id)$ is a central simple algebra with 
  involution of orthogonal, symplectic or unitary type, respectively,  
  over the quotient field $\Qf 
  (R/\fp)$, and that
  $\s$ is \emph{of orthogonal, symplectic or unitary type} 
  if it is  of orthogonal, symplectic or unitary type, respectively, 
   at all $\fp\in \Spec R$.
\end{defi}

We often simply say that $\s$ is orthogonal, symplectic or unitary (at $\fp$).
We refer to \cite{BOI} for the definitions of orthogonal,  symplectic and
unitary involutions on central simple algebras.

\begin{prop}[See {\cite[Proposition~1.21]{first23}}  for a more detailed 
  statement]\label{rem:quad-et}
  Assume that $R$ is connected. Then precisely 
  one of the following  holds:
  \begin{enumerate}
    \item $\s$ is of orthogonal type, 
    $Z(A)=R$, and $\s|_{Z(A)}=\Id_{Z(A)}$;
    \item $\s$ is of symplectic type, 
    $Z(A)=R$, and $\s|_{Z(A)}=\Id_{Z(A)}$;
    \item $\s$ is of unitary type, $Z(A)$ is a quadratic \'etale $R$-algebra,
      and $\s|_{Z(A)}$ is the standard involution on $Z(A)$.
  \end{enumerate}
\end{prop}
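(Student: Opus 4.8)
The plan is to reduce Proposition~\ref{rem:quad-et} to the cited result \cite[Proposition~1.21]{first22} together with the structural facts already assembled in the excerpt, and then to verify the trichotomy carefully using connectedness of $R$. First I would recall that since $(A,\s)$ is an Azumaya algebra with involution over $R$, Proposition~\ref{prop:Az} gives that $A$ is Azumaya over $Z(A)$, and $Z(A)$ is finite \'etale over $R$; because $R$ is connected, \cite[Proposition~1.21(iii)]{first22} (as invoked in the proof of Lemma~\ref{twodefs}) tells us that $Z(A)$ is either $R$ itself or a quadratic \'etale extension of $R$. These two cases will correspond to $\s|_{Z(A)}$ being the identity versus a nontrivial involution, and the first case will split further into the orthogonal and symplectic subcases.

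Next I would handle the case $Z(A)=R$. Here $\s$ restricts to an $R$-linear involution of the centre $R=Z(A)$; since $\Sym(Z(A),\s)=R$ by the third bullet of Definition~\ref{first-def}, we get $\s|_{Z(A)}=\Id_{Z(A)}$ immediately. It remains to see that at every $\fp\in\Spec R$ the involution $\s\ox\id$ on the central simple algebra $A\ox_R\Qf(R/\fp)$ over $\Qf(R/\fp)$ is of the \emph{same} type (orthogonal everywhere, or symplectic everywhere). This is exactly the content of the ``type is locally constant'' part of \cite[Proposition~1.21]{first22}: on a connected base the orthogonal/symplectic dichotomy cannot vary from point to point, essentially because the rank of $\Sym(A,\s)$ as a projective $R$-module is locally constant and distinguishes the two types (namely $\binom{n+1}{2}$ versus $\binom{n}{2}$ where $n^2=\rk_R A$). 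I would cite First for this rather than reprove it, and record the outcome as cases (1) and (2).

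Finally, in the case where $Z(A)$ is a quadratic \'etale extension of $R$, the involution $\s$ is non-trivial on $Z(A)$: indeed if it were trivial then $Z(A)=\Sym(Z(A),\s)=R$ by Definition~\ref{first-def}, contradicting that $Z(A)\neq R$ (one must check $Z(A)$ is genuinely larger, which follows from $Z(A)$ being quadratic \'etale and $R$ connected, since a quadratic \'etale algebra over a connected ring has rank exactly $2$). A non-trivial $R$-linear involution on a quadratic \'etale $R$-algebra is its standard involution $\vt$ (uniqueness of the standard involution, \cite[Section~1.3]{first22}, \cite[I,~1.3.6]{knus91}), so $\s|_{Z(A)}=\vt$; and then for every $\fp$ the algebra $Z(A)\ox_R\Qf(R/\fp)$ is a quadratic \'etale, hence rank-$2$, $\Qf(R/\fp)$-algebra with its standard involution, which is by definition precisely the statement that $\s\ox\id$ is unitary at $\fp$. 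This gives case (3). The three cases are mutually exclusive since they are distinguished by $Z(A)$ (equal to $R$ or not) and, within $Z(A)=R$, by the locally constant type; exhaustiveness follows from the dichotomy for $Z(A)$ noted at the outset.

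The main obstacle is not any single computation but rather citing the correct parts of \cite[Proposition~1.21]{first22}: specifically the fact that, over a connected base, the orthogonal-versus-symplectic type of $\s$ is constant across $\Spec R$. I would isolate that as the one input taken from First and make sure the statement we quote really says ``type is independent of $\fp$ on a connected ring'' and not merely ``type is defined pointwise''. Everything else — the behaviour on the centre, the uniqueness of the standard involution, and the rank-$2$ characterisation of quadratic \'etale algebras over connected rings — is routine and already foreshadowed in Sections~\ref{sec:quadalg} and the discussion following Definition~\ref{first-def}.
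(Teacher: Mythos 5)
Your proposal is correct and matches the paper's treatment: the paper gives no independent proof of Proposition~\ref{rem:quad-et} but simply quotes \cite[Proposition~1.21]{first22}, and your argument amounts to a careful unpacking of that citation (constancy of the orthogonal/symplectic type over a connected base, $Z(A)=R$ or quadratic \'etale, and the standard-involution identification via the uniqueness statement also used in Lemma~\ref{S-diag}). The routine verifications you add are sound, so there is nothing to object to.
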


\begin{rem}\label{rem:csa-type}
  If $(A,\s)$ is a central simple algebra over a field, the involution $\s$
  is called \emph{of the first kind} if $\s|_{Z(A)}=\id_{Z(A)}$ and 
  \emph{of the second kind} otherwise (so that ``second kind'' is the same
  as ``unitary type''), cf. \cite{BOI}.  
\end{rem}

\begin{lemma}\label{S-diag}
  Let $R$ be semilocal connected, let $T$ be a quadratic \'etale  $R$-algebra,
  and let $\tau$ be an $R$-linear involution on $T$. If $T$ is connected, then
  every nonsingular  hermitian form over $(T,\tau)$ is diagonalizable.
\end{lemma}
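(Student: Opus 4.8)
The plan is to dispatch the two possible involutions on $T$ separately and, in the nontrivial case, to argue by induction on the rank, peeling off a nonsingular rank-one subform at each step.

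Since $R$ and $T$ are both connected, every $R$-linear involution $\tau$ on $T$ is either $\Id_T$ or the standard involution $\vt$ of $T$: writing $T=R\oplus R\lambda$ with $\lambda^2\in R^\x$ and $\vt(\lambda)=-\lambda$ (available by semilocality of $R$, cf. Section~\ref{sec:quadalg}), a short computation using that $2$ is invertible shows that any such involution sends $\lambda$ to $b\lambda$ with $b^2=1$, whence $b\in\{1,-1\}$ by connectedness of $R$ (via the idempotent $(1+b)/2$). If $\tau=\Id_T$, then $\Herm(T,\Id_T)$ is the category of symmetric bilinear forms over $T$; since $T$ is finite over the semilocal ring $R$ it is itself semilocal, and nonsingular symmetric bilinear (equivalently, as $2\in T^\x$, quadratic) forms over semilocal rings are diagonalizable by \cite{baeza}. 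This settles the case $\tau=\Id_T$.

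Now assume $\tau=\vt$, so that $\Sym(T,\vt)=R$ (since $(T,\vt)$ is an Azumaya algebra with involution over $R$; see Remark~\ref{rem-first-0} and Definition~\ref{first-def}). I would induct on the rank $n$ of $(M,h)\in\bH(T,\vt)$, which is well defined because $T$ is connected; for $n=0$ there is nothing to prove. Let $n\ge 1$, and view $M$ as a finitely generated projective $R$-module (using that $T$ is finite projective over $R$). Because $\Tr_{T/R}$ is an involution trace for $\vt$ (cf. \cite[I, Section~7]{knus91}), the transfer $q:=\Tr_{T/R}(h)$ is a nonsingular quadratic form over the semilocal ring $R$; since $h(x,x)\in\Sym(T,\vt)=R$, equation~\eqref{eq:tr} gives $q(x)=\Tr_{T/R}(h(x,x))=2\,h(x,x)$ for all $x\in M$, and as $q$ represents a unit of $R$ by \cite{baeza}, there exists $x\in M$ with $u:=h(x,x)\in R^\x\subseteq T^\x$. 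Then $xT$ is a free rank-one $T$-submodule of $M$ on which $h$ restricts to $\qf{u}_\vt$, and $u\in T^\x$ forces the orthogonal decomposition $M=xT\perp(xT)^\perp$ with $h|_{(xT)^\perp}$ nonsingular of rank $n-1$. By the induction hypothesis $h|_{(xT)^\perp}$ is diagonalizable, hence so is $h$.

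The only real content is the middle of the last paragraph: that the trace transfer $\Tr_{T/R}(h)$ of a nonsingular hermitian form over $(T,\vt)$ is again nonsingular — so that Baeza's theorem on quadratic forms over semilocal rings applies and produces a value of $h$ that is a unit — together with the standard fact that a unit value lets one split off a nonsingular rank-one orthogonal summand. Identifying the $R$-linear involutions on $T$, the well-definedness of the rank over the connected ring $T$, and the freeness of the rank-one summand are all routine.
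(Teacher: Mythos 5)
Your argument is correct, but it takes a different route from the paper in the main case. The paper identifies $\tau$ via \cite[Lemma~1.17]{first22} and then simply invokes \cite[Proposition~2.13]{first22}, which gives diagonalizability of nonsingular hermitian forms over an Azumaya algebra with non-symplectic involution over a semilocal ring (the hypothesis $Z(T)\neq R$ rules out the symplectic case); the identity case is handled exactly as you do, via Baeza, after noting that $T$ is semilocal (\cite[VI, Proposition~1.1.1]{knus91}) and that projective $T$-modules are free by \cite{hinohara}. You instead give a self-contained induction for the unitary case: transfer $h$ along the involution trace $\Tr_{T/R}$ to get a nonsingular symmetric bilinear form over $R$, use the semilocal unit-representation result together with $\Tr_{T/R}(r)=2r$ on $R=\Sym(T,\vt)$ to find $x$ with $h(x,x)\in R^\x$, and split off the free rank-one summand $xT$. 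This buys independence from First's Proposition~2.13 (your splitting argument even produces the freeness of the underlying module as a by-product), at the cost of redoing by hand, in this rank-two commutative case, what that proposition provides in general; the paper's version is shorter and scales to arbitrary Azumaya algebras. Two small points worth making explicit: in the $\tau=\Id_T$ case, diagonalizability presupposes that the underlying projective $T$-module is free, which holds here because $T$ is semilocal and connected (Hinohara) --- the paper states this, your appeal to Baeza leaves it implicit; and your use of the transfer requires that $\Tr_{T/R}$ is an involution trace so that the transferred form is again nonsingular, which is \cite[I, Proposition~7.3.6]{knus91} together with the standard properties of transfer in \cite[I, \S7]{knus91}, as you indicate.
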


\begin{proof}
  The algebra $T$ is semilocal by \cite[VI, Proposition~1.1.1]{knus91}. 
  Since $T$ is also connected,  every projective $T$-module is free 
  by \cite{hinohara}. Furthermore, $2 \in T^\x$.

  If $\tau = \id$, the result follows since every quadratic form over $T$ is
  diagonalizable (see \cite[Proposition~3.4]{baeza}).

  If $\tau \not = \id$. Then $\tau$ is the standard involution on
  $T$ by \cite[Lemma~1.17]{first23}, and $(T,\tau)$ is an Azumaya algebra with
  involution over $R$, cf. Remark~\ref{rem-first-0}. 
  Since  $Z(T) \not = R$, and in particular $\tau$ is not symplectic, 
  we may conclude by
  \cite[Proposition~2.13]{first23}.
\end{proof}

\begin{thm}[Hermitian Morita equivalence]\label{thm:HME}
  Let $R$ be semilocal connected and let $(B,\tau)$ be an Azumaya algebra
  with involution over $R$ such that $A$ and $B$ are Brauer equivalent and
  $\s|_S=\tau|_S$, where $S:=Z(A)=Z(B)$. Assume that $S$ is connected.
  Then there exists $\delta\in\{-1,1\}$
  such that the  
  categories
  $\Herm^\ve(A,\s)$ and $\Herm^{\delta\ve}(B,\tau)$ are equivalent, 
  and this equivalence induces an
  isomorphism of Witt groups $W^{\ve}(A,\s)\cong 
  W^{\delta\ve}(B,\tau)$. Specifically, if $\s|_S=\id_S$, then
  $\delta=1$ if $\s$ and $\tau$ are both orthogonal or both symplectic and
  $\delta=-1$ otherwise; if $\s|_S\not=\id_S$, then $\delta$ can be chosen
  freely in $\{-1,1\}$. 
\end{thm}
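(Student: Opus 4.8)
The plan is to build the Morita equivalence from a suitable Brauer-equivalence datum and then track the involution and the sign $\delta$ through it. Since $A$ and $B$ are Brauer equivalent Azumaya $S$-algebras with $S$ connected semilocal, there is a faithfully projective right $A$-module $P$ and an $S$-algebra isomorphism $B \cong \End_A(P)$; conjugating, we may assume this is an equality $B = \End_A(P)$. Hermitian Morita theory (as in \cite[I, Chapter~9]{knus91}, or the Azumaya-over-semilocal version in \cite{first22}) then says that a choice of nonsingular $\mu$-hermitian form $\vf\colon P\times P\to A$ (with respect to $\s$, for a suitable sign $\mu$) whose adjoint involution $\ad_\vf$ on $B=\End_A(P)$ agrees with $\tau$ induces an equivalence of categories $\Herm^\ve(A,\s)\xrightarrow{\ \sim\ }\Herm^{\mu\ve}(B,\tau)$, sending $(M,h)$ to $(\Hom_A(P,M), \text{scaled form})$, and this equivalence is additive and Witt-exact, hence induces $W^\ve(A,\s)\cong W^{\mu\ve}(B,\tau)$. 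So the real content is: (a) such a $\vf$ exists with $\ad_\vf = \tau$, and (b) the admissible signs $\mu=\delta$ are exactly those described in the statement.

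For existence, the key point is that any two involutions on $B$ of the form $\ad_{\vf_1}$, $\ad_{\vf_2}$ differ by an inner automorphism, and conversely, by \eqref{ad}, scaling $\vf$ by a unit $a\in\Sym^{\pm1}(A,\s)$ changes $\ad_\vf$ into $\Int(a)\circ\ad_\vf$. Starting from any nonsingular hermitian (or skew-hermitian) $\vf_0$ on $P$ — which exists because $P$ is faithfully projective over the semilocal ring and one can transfer a form along the Morita context, using that $2\in S^\x$ — one has $\ad_{\vf_0}$ and $\tau$ both involutions on $B$ restricting to $\s|_S$ on $S$, so $\tau = \Int(u)\circ\ad_{\vf_0}$ for some $u\in B^\x$ with $\ad_{\vf_0}(u)=\pm u$; the element $u$ is then itself realized as scaling $\vf_0$ by an appropriate symmetric or skew-symmetric unit of $A$ via the sandwich/Morita dictionary, producing the desired $\vf$ with $\ad_\vf=\tau$. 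I would lean on \cite[Proposition~2.13]{first22} and the surrounding results (diagonalizability/classification of involutions on Azumaya algebras over semilocal rings) to carry out this adjustment cleanly rather than redoing it by hand.

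For the sign computation, split according to whether $\s|_S=\id_S$. In the unitary case $\s|_S\neq\id_S$: here $Z(A)=S$ is a quadratic étale extension and, as in the proof of Lemma~\ref{S-diag}, one has freedom because tensoring $\vf$ with the rank-one form $\qf{\lambda}_\vt$ (where $\lambda\in S$, $\vt(\lambda)=-\lambda$, $\lambda^2\in R^\times$, from Section~\ref{sec:quadalg}) negates the sign $\mu$ while keeping the form nonsingular and the adjoint involution on $B$ unchanged up to the identification; hence both $\delta=1$ and $\delta=-1$ occur. In the case $\s|_S=\id_S$, $S=R$ and $\s,\tau$ are each orthogonal or symplectic; one reduces modulo each maximal ideal of $R$ to the central-simple-algebra situation over a field, where it is classical (see \cite{BOI} and \cite[I, Section~9]{knus91}) that the sign $\mu$ such that a nonsingular $\mu$-hermitian form with adjoint $\tau$ exists is governed by the types: $\mu=1$ forces $\s$ and $\tau$ to be of the same type (both orthogonal or both symplectic) and $\mu=-1$ forces opposite types. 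Since $R$ is connected the type is constant across $\Spec R$, and Proposition~\ref{first-ideals}(3) upgrades the mod-$\fm$ statement to the claimed global value of $\delta$.

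The main obstacle I anticipate is part (a)/(b) over a general semilocal base rather than a field: over a field everything is Morita-standard and the sign bookkeeping is in \cite{BOI}, but over a semilocal ring one must make sure the hermitian Morita context, the existence of the nonsingular form $\vf$ on $P$, and the ``adjust by a (skew-)symmetric unit'' step all go through integrally — this is where I expect to cite \cite{first22} (in particular its treatment of involutions on Azumaya algebras and of hermitian forms over semilocal rings) and \cite[I, Chapter~9]{knus91} heavily, and where a careless argument could secretly assume a field. Everything after that — additivity of the equivalence, compatibility with orthogonal sums and hyperbolic forms, hence the induced Witt-group isomorphism — is routine functoriality of the Morita equivalence.
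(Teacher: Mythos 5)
Your overall architecture (Brauer equivalence $\Rightarrow$ Morita module $P$ with $B\cong\End_A(P)$, realize $\tau$ as the adjoint involution of a $\delta$-hermitian form $\vf$ on $P$, then apply hermitian Morita theory once and read off $\delta$) is a legitimate route, but it has a genuine gap at its central step: the existence of a nonsingular hermitian or skew-hermitian form $\vf_0\colon P\times P\to A$ with respect to $\s$ is asserted, not proved. The phrase ``one can transfer a form along the Morita context, using that $2\in S^\x$'' is not an argument: the existence of such a form on $P$ (equivalently, the semilocal analogue of the field-theoretic fact that every involution on $\End_A(P)$ compatible with $\s|_S$ is adjoint to a nonsingular $\pm 1$-hermitian form on $P$, \cite[Theorem~4.2]{BOI}) is essentially the content of the theorem you are proving, and over a semilocal base it requires real input --- e.g.\ an identification $P\cong P^*$ (twisted dual), which itself needs triviality of the relevant Picard group plus a symmetrization/patching argument, or a citation to a transfer-of-involutions result for Azumaya algebras. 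The reference you lean on, \cite[Proposition~2.13]{first22}, is about diagonalizability of hermitian forms and does not supply this. Note also two smaller imprecisions: the adjustment step should replace $\vf_0$ by $\vf(x,y):=\vf_0(u^{-1}(x),y)$ for the unit $u\in\End_A(P)^\x$ produced by the (semilocal, i.e.\ \cite[Theorem~8.6]{BFP}, not merely classical) Skolem--Noether theorem, rather than by ``a symmetric or skew-symmetric unit of $A$''; and Proposition~\ref{first-ideals}(3) is not the tool that globalizes the sign --- what one needs is that $\delta$ is uniquely determined when $\s|_S=\id_S$, after which it can be computed at a single residue field via \cite[Proposition~2.7]{BOI}.

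It is worth seeing how the paper's proof is engineered precisely to avoid the form-existence problem: instead of working with the asymmetric presentation $B\cong\End_A(P)$, it uses semilocal connectedness of $S$ (via \cite{hinohara}) to make the $S$-modules in the Brauer equivalence free, obtaining $\End_A(A^k)\cong\End_B(B^\ell)$; on these free modules the nonsingular forms $k\x\qf{1}_\s$ and $\ell\x\qf{1}_\tau$ exist trivially, both sides are Morita-equivalent to $(A,\s)$ and $(B,\tau)$ respectively, and the two adjoint involutions on $\End_B(B^\ell)$ are then compared by \cite[Theorem~8.6]{BFP}, with the sign change absorbed by the unit-scaling equivalence of \cite[I, (5.8)]{knus91}. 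Your treatment of the unitary case (scaling by a central $\lambda$ with $\iota(\lambda)=-\lambda$, $\lambda^2\in R^\x$ to flip $\delta$) is correct and matches the freedom of choice in the statement; but as written, the existence of $\vf_0$ on $P$ remains the missing idea, and without it the rest of your argument has nothing to twist.
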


\begin{proof}
  If $\s|_S\not=\id_S$,  then $S$ is a quadratic \'etale
  $R$-algebra by Proposition~\ref{rem:quad-et}
  and is thus semilocal by \cite[VI, Proposition~1.1.1]{knus91}. 
  
  Since $A$ and $B$ are Brauer equivalent, there exist faithful
  finitely generated 
  projective $S$-modules $P$ and $Q$ such that $A\ox_S \End_S(P)\cong
  B\ox_S \End_S(Q)$, cf. \cite[III, (5.3)]{knus91}. Since $S$
  is semilocal connected, $P$ and $Q$ are free  
  by \cite{hinohara}. It 
  follows that
  there exist $k,\ell \in \N$ such that 
  \begin{equation}\label{end-iso}
   \End_A(A^k)\cong \End_B(B^\ell). 
  \end{equation}
  
  Consider the nonsingular hermitian forms $(A^k, \vf:=k\x \qf{1}_\s)$ and
  $(B^\ell, \psi:=\ell\x \qf{1}_\tau)$ over $(A,\s)$ and $(B,\tau)$, 
  respectively.   
  Since $\vf$ and $\psi$ are hermitian, the involutions $\s$ and $\ad_\vf$
  are of the same type and the same is true for the involutions $\tau$
  and $\ad_\psi$, cf. \cite[Proposition~2.11]{first23}. In particular,
  $\s|_S=\ad_\vf|_S$ and $\tau|_S=\ad_\psi|_S$.

  The categories 
  $\Herm^\ve(\End_A(A^k), \ad_\vf)$ and $\Herm^\ve(A,\s)$ are equivalent by
  hermitian Morita theory. The same is true for
  the categories
  $\Herm^\ve(\End_B(B^\ell),\ad_\psi)$ and $\Herm^\ve(B,\tau)$. 
  Furthermore, these equivalences respect orthogonal sums and send
  hyperbolic spaces to hyperbolic spaces, cf. \cite[I, 
  Theorem~9.3.5]{knus91}. 
  
  The involution $\ad_\vf$ induces an involution $\omega$
  on $\End_B(B^\ell)$ of the same type via the
  isomorphism \eqref{end-iso}, and thus $\omega|_S =\ad_\psi|_S$ since
  $\s|_S=\tau|_S$.  
  By the Skolem-Noether theorem 
  \cite[Theorem~8.6]{BFP}, $\omega$ differs from $\ad_\psi$ by an inner 
  automorphism: there exists $\delta \in \{-1,1\}$ and a unit $u\in 
  \End_B(B^\ell)$ with 
  $\ad_\psi(u)=\delta u$ such that $\omega = \Int(u)\circ \ad_\psi$,
  where $\delta$ can be freely chosen in $\{-1,1\}$ if $\s|_S \not = \id_S$.
  Moreover, $\delta$ is unique if $\s|_S=\id_S (=\ad_\psi|_S)$:  if 
  $\Int(u)=\Int(u')$,
  then $u=u's$ for some $s\in S^\x$, and it follows that
  $\ad_\psi(u)=\delta u$ if and only if $\ad_\psi(u')=\delta u'$.
  
  By \cite[I, (5.8)]{knus91} (see also \cite[Section~2.7]{first23})
  there is an equivalence between
  the categories $\Herm^\ve(\End_B(B^\ell),\omega)$ and 
  $\Herm^{\delta\ve}(\End_B(B^\ell),\ad_\psi)$ 
  (and an induced equivalence between the categories 
  $\bH^\ve(\End_B(B^\ell),\omega)$ and 
  $\bH^{\delta\ve}(\End_B(B^\ell),\ad_\psi)$)
  which respects orthogonal
  sums and hyperbolic spaces. The equivalence of the categories
  $\Herm^\ve(A,\s)$ and $\Herm^{\delta\ve}(B,\tau)$ and the isomorphism
  of the Witt groups $W^{\ve}(A,\s)$ and   $W^{\delta\ve}(B,\tau)$ follows.

  It remains to show the claim about $\delta$ when $\s|_S=\id_S$.
  Since $\delta$ is unique,
  it suffices to check its value at any $\fp \in \Spec R$ (i.e., after tensoring 
  over
  $R$ by $\Qf(R/\fp)$). By \cite[Proposition~2.7]{BOI} we know that $\delta = 1$
  if $\s$ and $\tau$ are both orthogonal or both symplectic at $\fp$ and $\delta
  = -1$ otherwise. The statement then follows, using
  Proposition~\ref{rem:quad-et}.
\end{proof}

\begin{rem}\label{rem:notcon}
  Let $R$ be connected and let $(A,\s)$ be an Azumaya algebra with involution
  over $R$.
  Note that when $Z(A)$ is not connected, then $Z(A)\cong R\x R$ by 
  Section~\ref{sec:quadalg} and Lemma~\ref{twodefs}. Hence,
  $W^\ve(A,\s)=0$, cf. 
  \cite[Example~2.4]{first23}.
\end{rem}

\subsection{Orderings, Spaces of signatures}
\label{sec:ord-sig}

Let $R$ be a commutative ring, and let $(A,\s)$ be an Azumaya algebra
with involution over $R$.
The real spectrum of $R$, $\Sper R$, is the set of all orderings on $R$, cf.
\cite[Definitions~3.3.1(a) and 3.3.4, and Proposition~3.3.5]{KSU}. It is
equipped with the Harrison topology, with subbasis given by the sets of the form
\[
 \mathring H(r) := 
 \{\alpha \in \Sper R \mid r > 0 \text{ at } \alpha\} 
    = \{\alpha \in \Sper R \mid r \in \alpha \setminus -\alpha\},
\]
for all $r \in R$.

Let $\alpha \in\Sper R$. We often write $x\geq_\alpha y$ for $x-y\in\alpha$.
The support of $\alpha$ is the prime ideal $\Supp(\alpha) := \alpha \cap
-\alpha \in \Spec R$, and   
we denote by $\bar \alpha$ the ordering induced by $\alpha$ on
the quotient field
$\kappa(\alpha):=\Qf (R/\Supp(\alpha))$, and by $k(\alpha)$ a real closure
of $\kappa(\alpha)$ at $\bar\alpha$. Observe that 
$\kappa(\bar\alpha):=\Qf (\kappa(\alpha)  /\Supp(\bar\alpha))
=\Qf (\kappa(\alpha)  /\{0\})=\kappa(\alpha)$, and that we can take 
$k(\bar\alpha)=k(\alpha)$.

We also define
\[(A(\alpha), \sigma(\alpha)) := (A \ox_R \kappa(\alpha), \s \ox \id),\]
which is a central simple $\kappa(\alpha)$-algebra with involution by
Corollary~\ref{change-of-base-bis}.

\begin{rem}\label{M-ox}
  Let $M$ be an $R$-module and let $\alpha \in \Sper R$. Then, if $x \in M
  \ox_R \kappa(\alpha)$, there are $t \in \N$, $m_i \in M$, $s_i \in
  R$ and $r_i \in R \setminus \Supp (\alpha)$ such that
  \[
    x = \sum_{i=1}^t m_i \ox \dfrac{\bar s_i}{\bar r_i} 
      = \sum_{i=1}^t m_i \ox \dfrac{\bar s'_i}{\bar r'}
      = \Bigl(\sum_{i=1}^t m_i s'_i \Bigr) \ox \dfrac{1}{\bar r'},
  \]
  for some  $s'_i \in R$, $r' \in R \setminus \Supp (\alpha)$.
  Therefore
  \[M \ox_R \kappa(\alpha) = \Big\{m \ox \dfrac{1}{\bar r} \,\Big|\, m 
  \in M,\ r \in
  R \setminus \Supp (\alpha)\Big\}.\]
\end{rem}

We denote by $\Sign R$ the set of signatures of $R$, i.e., the space of all
morphisms of rings from the Witt ring $W(R)$ to $\Z$. We recall some facts from
\cite[Section 5 up to p.~89]{knebusch84}:

\begin{itemize}
  \item The set $\Sign R$ is equipped with the coarsest topology that makes all maps
    \[\sign_\bullet \vf \colon  \Sper R \rightarrow \Z,\ \alpha \mapsto \sign_\alpha \vf\]
    continuous, for $\vf \in W(R)$. When $R$ is semilocal, a basis for this
    topology is given by the sets
    \[H(u_1,\ldots,u_k) := \{\tau \in \Sign R \mid \tau(u_1) = \cdots = 
    \tau(u_k) =1\},
    \]
    for $k \in \N$ and $u_1, \ldots, u_k \in R^\x$.
  \item Denoting by $\Sperm R$ the space of all elements of $\Sper R$
    that are maximal for inclusion
    (equipped with the induced topology), the natural map
    \[\Sperm R \rightarrow \Sign R,\ \alpha \mapsto \sign_\alpha,\]
    is continuous and surjective. If $R$ is semilocal, this map is a
    homeomorphism  and we identify $\Sign R$ and $\Sperm R$.
    In general, we have continuous maps 
    \begin{equation*}
      \Sperm R \subseteq \Sper R \stackrel{\xi}{\rightarrow}
      \Spec R,
    \end{equation*}
    where $\xi$ is defined by $\xi(\alpha) := \Supp(\alpha)$. 
    (Note that if $R=F$ is a field, then $\Sperm R=\Sper R = X_F$,
    the space of orderings of $F$.)
\end{itemize}
\medskip

The following theorems, due to Knebusch, explain how to obtain the 
maximal ordering associated to a signature on a semilocal ring. These results
can be found in \cite[Theorem~4.8]{kne75} and 
\cite[pp.~87-88]{knebusch84}. 
The connection with maximal orderings is given in the second reference,
but is presented for connected rings. It is pointed out that this assumption 
can be made without loss of generality, but we quickly present an argument
in the proof below.

\begin{thm}
  Assume that $R$ is semilocal and let $s$ be a signature on $R$. Define
  \[Q(s):= 
  \Big\{r_1^2u_1 + \cdots + r_k^2u_k \,\Big|\, k \in \N,\ u_i
  \in R^\x,\ s(\qf{u_i})=1, \ \sum_{i=1}^k Rr_i  = R\Big\}.\]
  Then
  \begin{enumerate}
    \item $Q(s)$ is closed under sum, $Q(s) \cap -Q(s) = \emptyset$ and $p(s) := R
      \setminus (Q(s) \cup -Q(s))$ is a prime ideal of $R$.
    \item $\alpha(s) := Q(s) \cup p(s)$ is a maximal ordering on $R$ with
      support $p(s)$ such that
      $\sign_{\alpha(s)} = s$.
  \end{enumerate}
\end{thm}
\begin{proof}
  For (1), see \cite[Theorem~4.8]{kne75}. 
  For (2), it follows immediately from the properties of $Q(s)$ that
  $\alpha(s)$ is an ordering on $R$ with support $p(s)$ and that
  $\sign_{\alpha(s)} = s$. 
  Suppose that $\alpha(s)$ is not maximal, so that there is
  $\beta \in \Sper R$ such that $\alpha(s) \subsetneq \beta$. Take $x \in \beta
  \setminus \alpha(s)$. Since $x \not \in \alpha(s)$ we have $x \in -\alpha(s) \setminus \alpha(s) = -Q(s)$ and
  thus $x = -(r_1^2u_1 + \cdots + r_k^2u_k)$ as described above. In $\Qf(R/\Supp
  \beta)$ we have $\bar x = -\sum_{i=1}^k \bar r_i^2 \bar u_i$ with $\bar u_i \in \bar
  \beta$, $\bar u_i \not =0$, so that $\bar u_i >_{\bar \beta} 0$. In particular $\bar x \in -\bar \beta$. Since $\bar x
  \in \bar \beta$ by choice of $x$ we have $\bar x = 0$, which implies that
  $\bar r_i = 0$ for every $i$, i.e., $r_1, \ldots, r_k \in \Supp \beta$,
  contradicting $\sum_{i=1}^k Rr_i = R$.
\end{proof}
Applying this result to $s = \sign_\alpha$ for $\alpha \in \Sperm R$, we obtain
the following theorem.

\begin{thm}\label{alpha-semilocal}
  Assume that $R$ is semilocal and that $\alpha \in \Sperm R$. Then
  \[\alpha \setminus \Supp(\alpha) = 
  \Big\{r_1^2u_1 + \cdots + r_k^2u_k \,\Big|\, k \in \N,\ u_i
  \in \alpha \cap R^\x, \ \sum_{i=1}^k Rr_i  = R\Big\}.\]
  The special case of $R$ local easily follows, but is worth noting and was
  obtained earlier (without the link to $\Sper R$, which was introduced later),
  first in \cite{KK72} if $2 \in R^\x$, and then in general in \cite{kne75}:
  \[\alpha \setminus \Supp(\alpha) = \{u_1 + \cdots + u_k \mid k \in \N,\ u_i
  \in \alpha \cap R^\x\}.\]
\end{thm}

Finally, we mention the following simple fact for future use: 

\begin{lemma}\label{cofinal}
  Let $F$ be a field with space of orderings $X_F$. Let $P\in X_F$, and denote 
  by $F_P$ any real closure of $F$ at $P$.  
  Then $F$ is cofinal in $F_P$, i.e., 
  for every $a\in F_P$
  there exists $b\in F$ such that $a\leq b$. In particular, for every
  $a\in F_P$, if $a>0$, then there exists $b\in F$ such that $0<b\leq a$.    
\end{lemma}

\begin{proof}
  Let $p(X)=a_0+a_1X + \cdots + a_{k-1}X^{k-1} + X^k \in F[X]$ be such that 
  $p(a)=0$.
  Then $a \leq \max\{1, |a_0| + \cdots + |a_{k-1}|\}$ by 
  \cite[Proposition~1.7.1]{KSU}. The second statement follows by taking
  inverses.
\end{proof}

\subsection{Positive definite matrices}

  The results in this section are well-known, but we could not find
  a reference for the quaternion case. 
  In this section we assume that $F$ is a real closed field and
  that $(D,\vt) \in \{(F,\id), (F(\sqrt{-1}),\gamma), ((-1,-1)_F, \gamma)\}$,
  where $\gamma$ denotes the canonical involution in each case. We 
  consider norms with values in $F$. For vectors in $D^k$
  we consider the euclidean norm, i.e., for $X=(x_1,\ldots, x_k)^t \in D^k$ we
  have
  $\|X\|= \sqrt{\sum_{i=1}^k n(x_i)}$,
  where $n(x) := \vt(x)x$. On $M_k(D)$ we use the induced
  operator norm,  i.e.,
  $\|M\|_\op :=\sup_{\|X\| = 1} \|MX\|$.
  Tarski's transfer principle \cite[Corollary~11.5.4]{mar08} ensures 
  that this supremum exists and that the operator norm
  is equivalent to the maximum norm determined by the unique ordering of $F$
  on the $F$-vector space $M_k(D)$ (both properties can be expressed by
  first-order formulas in the language of ordered fields, that are true in $\R$). Therefore, $ \|\cdot \|_\op $ defines
  the same topology as the one induced by the ordering of $F$.

\begin{lemma}\label{PD-open}
  Let $k \in \N$. Then 
  \[\PD_k(D,\vt) := \{B \in \Sym(M_k(D), \vt^t) \mid \vt(X)^t B X > 0 \text{ for
  every } X \in D^k\setminus\{0\}\}\]
  is an open subset of $\Sym (M_k(D), \vt^t)$ for the topology induced by the
  unique ordering of $F$. 
\end{lemma}
\begin{proof}
  Since this property can be expressed by a first-order formula in the
  language of fields in each of the three cases
  $(D,\vt) \in \{(F,\id), (F(\sqrt{-1}),\gamma), ((-1,-1)_F, \gamma)\}$, 
  it suffices to prove it for $F
  = \R$ by Tarski's transfer principle \cite[Corollary~11.5.4]{mar08}. 
  The well-known proof below works in all three cases, and we give
  the details in order to point out that it also works in the quaternion case.
  Observe that
  for every $X, Y \in D^k$, we have
  $\|\vt(X)^tY\| \le \|X\| \cdot \|Y\|$
  (the verification is direct in the quaternion case).

  We reformulate $M \in \PD_k(D,\vt)$ as: there is $\delta > 0$ such that
  $\vt(X)^t M X \ge \delta$ for every $X \in D^k$, $\|X\| = 1$.  Let $M \in 
  \PD_k(D,\vt)$ and let $\delta$ be as described
  above. Let $B \in \Sym(M_k(D), \vt^t)$ be such that $\|B-M\|_\op \le 
  \ve$.
  Then
  \[\vt(X)^t B X = \vt(X)^t M X +(\vt(X)^t B X - \vt(X)^t M X),\]
  and, if $\|X\| = 1$,
  \begin{align*}
    |\vt(X)^t B X - \vt(X)^t M X| &= |\vt(X)^t(B-M)X| = \|\vt(X)^t(B-M)X\| \\
      & \le \|X\| \cdot \|(B-M)X\| \le  \|B-M\|_\op  \le\ve
  \end{align*}
  from which the result follows if we take $\ve = \delta/2$.
\end{proof}

\section{$\CM$-Signatures of hermitian forms}
\label{sec3}

If $(A,\s)$ is a central simple $F$-algebra with involution 
over a field $F$, $h$ is a hermitian form over $(A,\s)$, and $P$ is an
ordering on $F$, we defined in \cite[Section~3.2]{A-U-Kneb} 
the signature of $h$ at
$P$ with respect to a particular Morita equivalence $\CM_P$ 
(which determines the
sign of the signature), denoted $\sign^{\CM_P}_P h$. We only considered 
nonsingular forms in \cite[Section~3.2]{A-U-Kneb}, which was unnecessarily 
restrictive since the method we used (reduction to the Sylvester signature via
scalar extension to a real closure 
of $F$ at $P$ and hermitian Morita theory) 
applies in fact to forms that may be singular.

We will use the notation and
results from \cite{A-U-Kneb}. Note that this signature is also presented in
\cite[second half of p.~499]{A-U-prime}, where the omission in \cite{A-U-Kneb}
of one (irrelevant)
case for $(D_P, \vartheta_P)$ has been rectified.

Let $R$ be a commutative ring (with $2\in R^\x$), let
$(A,\s)$ be an Azumaya algebra with involution over $R$, let $S=Z(A)$
and let $\iota:=\s|_S$.

\begin{defi}\label{def:sig_h}
  Let $h$ be a hermitian form over $(A,\s)$ and let $\alpha \in \Sper R$.  Then
  $h\ox \kappa(\alpha)$ is a hermitian form over the central simple algebra with
  involution $(A(\alpha),\s(\alpha))$ and we define the
  \emph{$\CM$-signature} of $h$ at $\alpha$ by
  \begin{equation}\label{eq:M-sign}
    \sign^{\CM}_\alpha h := \sign^{\CM_{\bar\alpha}}_{\bar \alpha} (h \ox
      \kappa(\alpha)),
  \end{equation}
  where $\CM_{\bar\alpha}$ is a Morita equivalence as in 
  \cite[Section~3.2]{A-U-Kneb}.
\end{defi}

Note that the superscript $\CM$ on the left hand side of \eqref{eq:M-sign}
signifies that each
computation of $\sign^{\CM}_\alpha$ depends on a choice of Morita
equivalence $\CM_{\bar\alpha}$. The use of a different Morita equivalence can
result at most in a change of sign, cf. \cite[Proposition~3.4]{A-U-Kneb}.
Therefore, the notation $\sign^{\CM}_\alpha$ should specify what Morita
equivalence $\CM_{\bar\alpha}$ is used but, in order not to overload the
notation, we assume that for a given $\alpha$ we always use the same Morita
equivalence $\CM_{\bar\alpha}$.

In fact, the main drawback of the $\CM$-signature is that the sign of the
signature of a form can be changed arbitrarily at each ordering by taking a
different Morita equivalence. This is in particular a problem if
we hope to consider the total signature of a form as a continuous 
function on $\Sper R$. We
solved this problem in the case of central simple algebras with involution by
introducing a ``reference form'', that determines the sign of the
signature at each ordering, cf.
\cite[Section~6]{A-U-Kneb} and \cite[Section~3]{A-U-prime}. 
We will show in a forthcoming publication that the
same can be done in the case of Azumaya algebras with involution.

\begin{rem}\label{sign-bounded}  
  Let $(B,\tau)$ be a central simple $F$-algebra with involution, $P \in X_F$, 
  and
  $F_P$ a real closure of $F$ at $P$. Note that $F=\Sym(Z(B),\tau)$.
  Then $B \cong M_n(D)$ and
  $B \ox_F F_P \cong M_{n_P}(D_P)$, where $D$ and $D_P$ are  division algebras
  with involution 
  over $F$ and $F_P$, respectively. (We do not need to name the involutions
  on $D$ and $D_P$ here.) 
  Clearly, $n \le n_P \le \sqrt{\dim_F B}$.
  Let $b \in \Sym(B,\tau)$. By \cite[Proposition~4.4]{A-U-PS}, we have
  $\sign^{\CM_P}_P \qf{b}_\tau \le n_P \le \sqrt{\dim_F B}$.
  Applying this to $(A(\alpha),\s(\alpha))$ in Definition~\ref{def:sig_h},
  let $\mathcal{S}$ be a set of generators of $A$ as an $R$-module, and let 
  $a \in \Sym(A,\s)$.
  We have
  \[\sign^{\CM}_\alpha \qf{a}_\s \le \sqrt{\dim_{\kappa(\alpha)} A(\alpha)}
  \le |\mathcal{S}|,\]
  which provides a bound on $\sign^{\CM}_\alpha \qf{a}_\s$ which is independent
  of $a$ and $\alpha$.
\end{rem}

\begin{prop}\label{mult-sign}
  Let $h$ be a hermitian form over $(A,\s)$, let $q$ be a quadratic form over
  $R$ and let $\alpha \in \Sper R$. Then
  \[\sign^\CM_\alpha (q \ox h) = (\sign_\alpha q) \cdot (\sign^\CM_\alpha h),\]
  where the same Morita equivalence is used in the computation of the signature
  on both sides of the equality.
\end{prop}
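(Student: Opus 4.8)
The plan is to reduce the identity to the field case, where it is known, by using the definition of the $\CM$-signature via scalar extension to $\kappa(\alpha)$. First I would note that by Definition~\ref{def:sig_h}, both sides of the claimed equality are computed after tensoring with $\kappa(\alpha)$ over $R$: we have $\sign^\CM_\alpha(q\ox h)=\sign^{\CM_{\bar\alpha}}_{\bar\alpha}((q\ox h)\ox\kappa(\alpha))$ and $\sign^\CM_\alpha h=\sign^{\CM_{\bar\alpha}}_{\bar\alpha}(h\ox\kappa(\alpha))$, while $\sign_\alpha q=\sign_{\bar\alpha}(q\ox\kappa(\alpha))$ by the usual definition of the signature of a quadratic form at an ordering. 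So it suffices to identify $(q\ox h)\ox\kappa(\alpha)$ with $(q\ox\kappa(\alpha))\ox_{\kappa(\alpha)}(h\ox\kappa(\alpha))$ as a hermitian form over $(A(\alpha),\s(\alpha))$, which is a standard base-change compatibility of the tensor product of forms; the observations preceding Lemma~\ref{R-Z-isom} (and Lemma~\ref{R-Z-isom} itself, identifying the $R$-tensor and the $Z(A)$-tensor) handle the bookkeeping here.

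Having made that reduction, the problem becomes: for a central simple algebra with involution $(B,\tau)$ over a field $F$, an ordering $P$ on $F$, a quadratic form $q$ over $F$ and a hermitian form $h'$ over $(B,\tau)$, show $\sign^{\CM_P}_P(q\ox h')=(\sign_P q)\cdot\sign^{\CM_P}_P h'$, with the same Morita equivalence on both sides. For this I would pass to a real closure $F_P$ of $F$ at $P$, as in the construction of $\sign^{\CM_P}_P$ recalled at the start of Section~3: the $\CM_P$-signature of a hermitian form over $(B,\tau)$ is, by definition, the Sylvester signature of the diagonal quadratic form obtained from $h'\ox F_P$ via the hermitian Morita equivalence $\CM_P$ over $(B\ox_F F_P,\tau\ox\id)$. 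Since scalar extension to $F_P$ is compatible with tensor products, $q\ox h'$ goes to $(q\ox F_P)\ox(h'\ox F_P)$, and the Morita equivalence $\CM_P$ is $F_P$-linear and additive, so it sends $(q\ox F_P)\ox(h'\ox F_P)$ — which is an orthogonal sum of $\dim q$ scaled copies of $h'\ox F_P$, with scalars the diagonal entries of $q\ox F_P$ — to the corresponding orthogonal sum of scaled copies of $\CM_P(h'\ox F_P)$. Taking Sylvester signatures and using that the Sylvester signature of $\qf{u}\ox(\text{form})$ equals $\sgn_P(u)$ times that of the form then gives exactly $(\sign_P q)\cdot\sign^{\CM_P}_P h'$.

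I would expect the only real subtlety to be the interaction between scalar extension to the real closure and the hermitian Morita equivalence — specifically, making sure that $\CM_P$ commutes with scalar multiplication by elements of $F$ (so that a diagonalization $q\ox F_P\simeq\qf{d_1,\dots,d_m}$ really does pull the $d_i$ through $\CM_P$) and with orthogonal sums. Both are part of the standard package of hermitian Morita theory as set up in \cite[Section~3.2]{A-U-Kneb} and \cite{knus91}, so this is bookkeeping rather than a genuine obstacle, but it is where care is needed. An alternative, perhaps cleaner, route for this last step is to invoke the multiplicativity of $\sign^{\CM_P}_P$ with respect to tensoring by forms over $F$ that is already established in \cite{A-U-Kneb} (the $\CM_P$-signature is a $W(F)$-module morphism, or at least multiplicative against $W(F)$), in which case the whole proof collapses to the scalar-extension reduction of the first paragraph plus a citation; I would state it using whichever of these is cleanest given the precise results available in \cite{A-U-Kneb}.
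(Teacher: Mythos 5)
Your proposal is correct and follows essentially the same route as the paper: reduce to the field case via Definition~\ref{def:sig_h} by extending scalars to $\kappa(\alpha)$, then invoke multiplicativity of the signature against quadratic forms over the base field, which is exactly the paper's citation of \cite[Proposition~3.6]{A-U-Kneb} (your ``cleaner alternative''; your first, more detailed real-closure argument is just a re-derivation of that cited result).
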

\begin{proof}
  By definition $\sign^\CM_\alpha(q \ox h)$ is equal to 
  $\sign^{\CM_{\bar \alpha}}_{\bar \alpha}
  ((q \ox h) \ox \kappa(\alpha))$, where $(q \ox h) \ox \kappa(\alpha)$ is 
  considered as a form over
  $(A(\alpha),\s(\alpha))$. But $\sign^{\CM_{\bar \alpha}}_{\bar \alpha}
  ((q \ox h) \ox \kappa(\alpha))
   = (\sign_{\bar \alpha} q \ox \kappa(\alpha)) \cdot
  (\sign^{\CM_{\bar \alpha}}_{\bar \alpha} h \ox \kappa(\alpha))$ by
  \cite[Proposition~3.6]{A-U-Kneb}. The result follows.
\end{proof}

\begin{lemma}\label{trace-sign-zero}
  Let $T$ be a quadratic \'etale $R$-algebra with standard involution $\vt$.
  Let $h$ be a hermitian form over $(T,\vt)$. Then, for every 
  $\alpha \in \Sper R$,
    $\sign^\CM_\alpha h=0$ implies $\sign_\alpha \Tr_{T/R} (h)=0$.
\end{lemma}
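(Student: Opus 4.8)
The plan is to reduce everything to the case of a field, where the statement becomes a concrete fact about signatures and the transfer map $\Tr_{T/R}$. First I would fix $\alpha \in \Sper R$ and tensor up to $\kappa(\alpha)$: by Remark~\ref{rem-first-0}, $T \ox_R \kappa(\alpha)$ is a quadratic étale $\kappa(\alpha)$-algebra with standard involution $\vt \ox \id$, and the trace commutes with base change, i.e.\ $\Tr_{T/R}(h) \ox \kappa(\alpha) = \Tr_{(T\ox_R\kappa(\alpha))/\kappa(\alpha)}(h \ox \kappa(\alpha))$. By the definition of the $\CM$-signature (Definition~\ref{def:sig_h}) and of the ordinary signature of a quadratic form, both hypotheses and conclusion can thus be checked after passing to $\kappa(\alpha)$ with its induced ordering $\bar\alpha$, and then to a real closure $k(\alpha)$. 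So it suffices to prove: if $F$ is a field with ordering $P$, $F_P$ a real closure, $(E,\vartheta)$ a quadratic étale $F$-algebra with standard involution, and $h$ a hermitian form over $(E,\vartheta)$ with $\sign_P^{\CM_P} h = 0$, then $\sign_P \Tr_{E/F}(h) = 0$.

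Next I would split into the two structural cases for $E$ over $F_P$, using Corollary~\ref{change-of-base-bis} (or rather its evident analogue for quadratic étale algebras): after extending scalars to $F_P$, either $E \ox_F F_P$ is a field — necessarily $F_P(\sqrt{-1})$ since $F_P$ is real closed — or $E \ox_F F_P \cong F_P \times F_P$ with $\vartheta$ the exchange involution. In the split case, by Remark~\ref{rem:notcon}-type reasoning (or directly: hermitian forms over $(F_P \times F_P, \mathrm{exch})$ are hyperbolic), both the $\CM$-signature and the form itself vanish, so there is nothing to prove. In the nonsplit case $E \ox_F F_P = F_P(\sqrt{-1}) = \C$: a hermitian form over $(\C, \text{conjugation})$ diagonalizes over $\R = F_P$ as $\qf{a_1,\dots,a_n}$ with $a_i \in \R^\times$, its $\CM$-signature (= Sylvester signature up to the Morita sign) is $\sum \sgn(a_i)$, and the trace form $\Tr_{\C/\R}(\qf{a_1,\dots,a_n})$ is the real quadratic form $\perp_i \qf{a_i}\ox\Tr_{\C/\R}(\qf{1}) = \perp_i a_i\qf{1,1}$, whose signature is $2\sum\sgn(a_i)$. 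Hence $\sign_P \Tr_{E/F}(h) = \pm 2\,\sign_P^{\CM_P} h$, and in particular vanishes when the $\CM$-signature does.

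The one genuine subtlety is bookkeeping the Morita equivalence $\CM_{\bar\alpha}$: the $\CM$-signature of $h$ over $(A(\alpha),\s(\alpha))$ as defined is computed through a choice of Morita equivalence to a form over a division algebra, and one must make sure that the quadratic form $\Tr_{T/R}(h) \ox \kappa(\alpha)$ is being signed by the \emph{ordinary} Sylvester rule, not some Morita-twisted one — but since $\Tr_{T/R}(h)$ is honestly a quadratic form over $R$, this is automatic, and the sign ambiguity in $\CM_{\bar\alpha}$ is harmless precisely because we only claim an implication about vanishing, not an equality of signatures. I expect the main (minor) obstacle to be justifying cleanly that $\Tr_{T/R}$ commutes with the passage $h \mapsto h\ox\kappa(\alpha)$ and then $h\ox\kappa(\alpha) \mapsto h\ox k(\alpha)$, which follows from Remark~\ref{rem-first-0} together with the fact that real closure is a filtered colimit and the trace is defined module-theoretically; everything else is the elementary computation $\Tr_{\C/\R}\qf{a} = \qf{a,a}$ above.
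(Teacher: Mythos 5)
Your proposal is correct, but it takes a genuinely different route from the paper. The paper also reduces to the real closure $k(\alpha)$ via Remark~\ref{rem-first-0}, but from there it argues abstractly: it splits off the radical, $h \ox k(\alpha) \simeq \vf \perp 0$, invokes the field-level local--global theorem of Lewis--Unger (weak hyperbolicity of signature-zero forms over a central simple algebra with involution) to conclude that $\vf$ is weakly hyperbolic, and then uses the fact that the involution trace $\Tr_{T\ox k(\alpha)/k(\alpha)}$ sends hyperbolic forms to hyperbolic forms, so the transferred form has signature $0$. You instead exploit that $T$ has rank $2$ and compute the transfer explicitly over the real closed field in the two structural cases: the split case $k(\alpha)\x k(\alpha)$ with the exchange involution (where the nonsingular part is hyperbolic, hence so is its trace), and the nonsplit case $k(\alpha)(\sqrt{-1})$ with conjugation, where $\Tr\qf{a}_{\vt}\simeq \qf{a,a}$ gives the sharper identity $\sign_\alpha \Tr_{T/R}(h) = \pm 2\,\sign^\CM_\alpha h$. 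Your argument is more elementary (no appeal to weak hyperbolicity results) and yields a stronger quantitative statement, whereas the paper's argument is uniform and would apply verbatim to involution trace transfers from higher-rank Azumaya algebras. Two small bookkeeping points to tidy up: the lemma allows singular $h$ (the paper's signature is defined for possibly singular forms), so in the nonsplit case you should either allow $a_i=0$ in the diagonalization or first split $h\ox k(\alpha)\simeq \vf\perp 0$ as the paper does, noting that the trace of the zero form contributes nothing; and in the split case the phrase ``the form itself vanishes'' should be replaced by the observation that the trace of a hyperbolic form is hyperbolic, hence has signature $0$. Also, no filtered-colimit argument is needed for compatibility of the trace with the passage to $k(\alpha)$: Remark~\ref{rem-first-0} applies directly to any commutative $R$-algebra, including $k(\alpha)$.
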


\begin{proof}
  Note 
  that $(T,\vt)$ is an Azumaya algebra with involution over $R$ 
  and that $\vt\ox\id$ is the standard involution on $T\ox_R k(\alpha)$ 
  by Remark~\ref{rem-first-0}. In particular, signatures are defined, and
  by the definition of signatures for central simple algebras with involution,
  the form $h \ox_R k(\alpha)$ has signature $0$ at the unique ordering on
  $k(\alpha)$. Writing $h \ox_R k(\alpha)\simeq \vf \perp 0$,
  where $\vf$ is nonsingular and $0$ is the zero form of appropriate rank,
   cf. \cite[Proposition~A.3]{A-U-PS}, it
  follows that  $\vf$ has signature $0$ at the unique 
  ordering on $k(\alpha)$, and thus is weakly hyperbolic 
  (i.e., $\ell\x\vf$ is hyperbolic for some $\ell\in \N$)
  by 
  \cite[Theorem~4.1]{L-U} (or \cite[Theorem~6.5]{B-U}).

  As recalled in Section~\ref{sec:quadalg}, $\Tr_{T/R}$ is 
  $R$-linear and $\Tr_{T/R}(h)$ is a quadratic form over $R$.
  Also,  $\Tr_{T \ox_R k(\alpha) /   k(\alpha)}$ is an involution
  trace for $\vt\ox \id$, and it follows that $\Tr_{T \ox_R k(\alpha) /
  k(\alpha)}(\vf)$ is weakly hyperbolic by Section~\ref{sec:quadalg},
  and thus has signature~$0$.
  Since $\Tr_{T/R}(h) \ox k(\alpha)
  = \Tr_{T \ox_R k(\alpha) /  k(\alpha)}(h \ox k(\alpha))$ by
  Remark~\ref{rem-first-0}, and observing that
  \[
  \Tr_{T \ox_R k(\alpha) /  k(\alpha)}(h \ox k(\alpha)) \simeq
  \Tr_{T \ox_R k(\alpha) /  k(\alpha)}(\vf)\perp \Tr_{T \ox_R k(\alpha) /  k(\alpha)}(0),
  \]
  it follows that  $\sign_\alpha
  \Tr_{T/R}(h) = 0$.  
\end{proof}

\begin{defi}
  We call
  \[\Nil[A,\s] := \{\alpha \in \Sper R \mid \sign^\CM_\alpha = 0 \}\]
  the set of \emph{nil orderings} of $(A,\s)$. By the observation after 
  Definition~\ref{def:sig_h}, $\Nil[A,\s]$ is independent
  of the choice of Morita equivalence at each $\alpha$.
\end{defi}

\begin{rem}\label{rem:nil}
  If $(A,\s)$ is a central simple $F$-algebra with involution, this definition
  is equivalent to our original one (\cite[Definition~3.7]{A-U-Kneb} by
  \cite[Theorem~6.4]{A-U-Kneb}), from which it follows that if $\s$ is
  orthogonal and $\tau$ is any symplectic involution on $A$, then 
  $\Nil[A,\s]=X_F \setminus \Nil[A,\tau]$.
  
  Also note that if $P\in X_F\setminus \Nil[A,\s]$ and $F_P$ is a real closure 
  of $F$ at $P$, then 
  $(A\ox_F F_P, \s\ox\id)\cong (M_{n_P}(D_P), \Int(\Phi_P)\circ {\vt_P}^t)$,
  where $D_P\in\{F_P, 
  F_P(\sqrt{-1}),(-1,-1)_{F_P}\}$, $\vt_P$ is the canonical involution on
  $D_P$, and $\Phi_P \in \Sym (M_{n_P}(D_P)^\x, {\vt_P}^t)$, cf.
  \cite[p.~4 and Remark~6.2]{A-U-pos}. 
\end{rem}

\begin{lemma}\label{Nil-product}
  Let $\alpha \in \Sper R$. Then statements \eqref{np1} and \eqref{np3} below
  are equivalent:
  \begin{enumerate}
    \item\label{np1} $\alpha \in \Nil[A,\s]$;
    \item\label{np3} $\bar \alpha \in \Nil[A(\alpha),\s(\alpha)]$.
  \end{enumerate}
  Assume in addition that $\s$ is of unitary type at $\Supp(\alpha)$.
  Then \eqref{np1} and \eqref{np3} and the following statements are equivalent:
  \begin{enumerate} \setcounter{enumi}{2}
    \item\label{np3b} $Z(A(\alpha)) \ox_{\kappa(\alpha)} k(\bar\alpha) \cong
      k(\bar\alpha) \x k(\bar\alpha)$, i.e.,
      $Z(A(\alpha)) \ox_{\kappa(\alpha)} k(\alpha) \cong
      k(\alpha) \x k(\alpha)$ since we can take $k(\bar\alpha)=k(\alpha)$
      as observed before;
    \item\label{np2} $Z(A) \ox_R k(\alpha) \cong k(\alpha) \x k(\alpha)$;
    \item\label{np4} $\alpha \in \Nil[S,\iota]$.
  \end{enumerate}
\end{lemma}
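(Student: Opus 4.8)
The plan is to prove the cycle of implications $\eqref{np1}\Leftrightarrow\eqref{np3}$ first (the unconditional part), and then, under the unitary hypothesis, to close the loop $\eqref{np3}\Rightarrow\eqref{np3b}\Rightarrow\eqref{np2}\Rightarrow\eqref{np4}\Rightarrow\eqref{np1}$ (or some convenient reordering). The equivalence $\eqref{np1}\Leftrightarrow\eqref{np3}$ is almost definitional: by Definition~\ref{def:sig_h}, $\sign^\CM_\alpha h = \sign^{\CM_{\bar\alpha}}_{\bar\alpha}(h\ox\kappa(\alpha))$, and since $(A(\alpha),\s(\alpha)) = (A\ox_R\kappa(\alpha),\s\ox\id)$ and scalar extension to $k(\bar\alpha)$ (a real closure of $\kappa(\alpha)$ at $\bar\alpha$) is exactly the recipe used to define signatures for central simple algebras with involution, one has $\sign^{\CM_{\bar\alpha}}_{\bar\alpha}$ of a form over $(A(\alpha),\s(\alpha))$ equals the same thing as computed after further base change; so the condition ``$\sign^\CM_\alpha$ vanishes on all hermitian forms over $(A,\s)$'' coincides with ``$\sign^{\CM_{\bar\alpha}}_{\bar\alpha}$ vanishes on all hermitian forms over $(A(\alpha),\s(\alpha))$'', which is $\bar\alpha\in\Nil[A(\alpha),\s(\alpha)]$. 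One subtle point: a priori $\Nil[A(\alpha),\s(\alpha)]$ is defined via $X_{\kappa(\alpha)}$ and we must restrict to the single ordering $\bar\alpha$; and one must note that every hermitian form over $(A(\alpha),\s(\alpha))$ that matters is of the shape $h\ox\kappa(\alpha)$ up to the relevant Morita/Witt data — or, more cleanly, invoke Remark~\ref{rem:nil}, which characterizes nil orderings for central simple algebras purely in terms of a single ordering and the type of $\s$, so that $\bar\alpha\in\Nil[A(\alpha),\s(\alpha)]$ is an intrinsic condition on $(A(\alpha),\s(\alpha),\bar\alpha)$.

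Now assume $\s$ is of unitary type at $\fp:=\Supp(\alpha)$, so $(A(\alpha),\s(\alpha))$ is a central simple algebra with involution of the second kind over $\kappa(\alpha)$, with centre $Z(A(\alpha))$ a quadratic étale $\kappa(\alpha)$-algebra. For such an algebra, Remark~\ref{rem:nil} (applied via the discussion in \cite{A-U-Kneb}) tells us that $\bar\alpha$ is nil precisely when the involution becomes ``split'' over the real closure in the relevant sense — concretely, a unitary involution has vanishing signature at $\bar\alpha$ exactly when $Z(A(\alpha))\ox_{\kappa(\alpha)}k(\bar\alpha)$ is not a field, i.e.\ is $\cong k(\bar\alpha)\x k(\bar\alpha)$; over a real closed field any étale quadratic extension is either $k(\bar\alpha)[\sqrt{-1}]$ (giving a genuine unitary involution, signatures nonzero) or $k(\bar\alpha)\x k(\bar\alpha)$ (the hyperbolic/exchange situation, all signatures zero). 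This gives $\eqref{np3}\Leftrightarrow\eqref{np3b}$. The step $\eqref{np3b}\Leftrightarrow\eqref{np2}$ is the transitivity of base change: $Z(A)\ox_R k(\alpha) = Z(A)\ox_R\kappa(\alpha)\ox_{\kappa(\alpha)}k(\alpha)$, and $Z(A)\ox_R\kappa(\alpha) = Z(A(\alpha))$ by Proposition~\ref{change-of-base}, while $k(\bar\alpha)=k(\alpha)$ by the remarks in Section on orderings; so $Z(A)\ox_R k(\alpha)\cong Z(A(\alpha))\ox_{\kappa(\alpha)}k(\bar\alpha)$, and being $\cong k\x k$ transfers. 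Finally, $\eqref{np2}\Leftrightarrow\eqref{np4}$: by Proposition~\ref{rem:quad-et} (applied at $\fp$, after reducing to the connected/local situation at $\fp$, or directly to $(S,\iota)\ox\kappa(\alpha)$), $(S,\iota)$ at $\fp$ is an Azumaya algebra with unitary involution whose underlying algebra is $S$ itself, so $\sign^\CM$ of $(S,\iota)$ at $\alpha$ is the signature of a form over the central simple $\kappa(\alpha)$-algebra with involution $(S\ox_R\kappa(\alpha),\iota\ox\id) = (Z(A(\alpha)),\text{std})$; applying the same real-closed dichotomy as above, $\alpha\in\Nil[S,\iota]$ iff $Z(A(\alpha))\ox_{\kappa(\alpha)}k(\bar\alpha)\cong k\x k$ iff $\eqref{np2}$ holds. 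Chaining these, all five statements are equivalent.

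The main obstacle I anticipate is pinning down precisely, and citing correctly, the real-closed-field dichotomy for unitary involutions: that over a real closed field $k$, a central simple algebra with involution of the second kind has all $\CM$-signatures zero exactly when its centre is $k\x k$ rather than $k(\sqrt{-1})$. This is morally clear — over $k(\sqrt{-1})$ any hermitian form has a well-defined, generally nonzero, Sylvester-type signature via Morita reduction to $(k(\sqrt{-1}),{}^-)$, whereas over $k\x k$ with the exchange involution every hermitian form is hyperbolic (Remark~\ref{rem:notcon} in the Azumaya case, or the classical fact for $M_n(k)\x M_n(k)$) so its signature is forced to zero — but I want to make sure the sign conventions and the definition of $\sign^{\CM_{\bar\alpha}}_{\bar\alpha}$ from \cite{A-U-Kneb} are invoked so that ``all forms have signature $0$'' is literally what $\Nil$ means. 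The cleanest route is probably to phrase the whole unitary part as: for a central simple algebra with unitary involution over a real closed field, being nil is equivalent to the centre splitting, citing the appropriate statement in \cite{A-U-Kneb} (together with Remark~\ref{rem:nil}), and then everything else is formal base-change bookkeeping using Proposition~\ref{change-of-base} and the identifications $k(\bar\alpha)=k(\alpha)$, $\kappa(\bar\alpha)=\kappa(\alpha)$. A secondary, minor obstacle is handling $(S,\iota)$ in $\eqref{np4}$: one must observe that $(S,\iota)$ is itself an Azumaya algebra with involution over $R$ (it is quadratic étale with its standard involution when $\iota\neq\id$, by Remark~\ref{rem-first-0}, and the unitary-type hypothesis at $\fp$ forces exactly this local picture via Proposition~\ref{rem:quad-et}), so that $\sign^\CM_\alpha$ and hence $\Nil[S,\iota]$ are actually defined; once that is noted, $\eqref{np4}$ is just the special case $A=S$ of the preceding analysis.
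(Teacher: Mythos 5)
Your unconditional equivalence \eqref{np1}$\Leftrightarrow$\eqref{np3} is where the proposal has a genuine gap. The direction \eqref{np3}$\Rightarrow$\eqref{np1} is indeed definitional: every form over $(A,\s)$ extends to $(A(\alpha),\s(\alpha))$ and, by Definition~\ref{def:sig_h}, its signature at $\alpha$ is the signature of that extension at $\bar\alpha$. But \eqref{np1}$\Rightarrow$\eqref{np3} is not ``almost definitional'': $\alpha\in\Nil[A,\s]$ only gives vanishing of signatures on forms \emph{extended from} $A$, whereas \eqref{np3} demands vanishing on \emph{all} hermitian forms over $A(\alpha)$, which are not literally extended. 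You flag this (``every hermitian form \dots\ that matters is of the shape $h\ox\kappa(\alpha)$ up to the relevant Morita/Witt data'') but do not prove it, and your alternative fix --- invoking Remark~\ref{rem:nil} --- does not close it: that remark only says that for central simple algebras the nil condition is intrinsic to $(A(\alpha),\s(\alpha),\bar\alpha)$; it does not produce, from a non-nil $\bar\alpha$, a form defined over $A$ itself with nonzero signature at $\alpha$. The missing step is exactly the paper's argument: reduce to rank-one forms (if $\bar\alpha\notin\Nil[A(\alpha),\s(\alpha)]$, then by \cite[Theorem~6.4]{A-U-Kneb} there is $z\in\Sym(A(\alpha),\s(\alpha))$ with $\sign^{\mab}_{\bar\alpha}\qf{z}_{\s(\alpha)}\neq 0$) and then clear denominators via Remark~\ref{M-ox}: writing $z=a\ox\tfrac{1}{\bar r}$ with $r\in R\setminus\Supp(\alpha)$, one has $\sign^{\mab}_{\bar\alpha}\qf{z}_{\s(\alpha)}=\sign^{\mab}_{\bar\alpha}\qf{z\bar r^{2}}_{\s(\alpha)}=\sign^\CM_\alpha\qf{ar}_\s$ with $ar\in\Sym(A,\s)$, contradicting \eqref{np1}. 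Without this descent-up-to-squares step your first paragraph does not establish \eqref{np1}$\Rightarrow$\eqref{np3}, and the same issue is inherited wherever you use the analogous direction for $(S,\iota)$ in \eqref{np4} (there it could be repaired directly with the single form $\qf{1}_\iota$, since $S\ox_R k(\alpha)$ is itself the relevant algebra, but you do not say so).

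The conditional (unitary) part of your proposal follows essentially the paper's route: the characterization of nil orderings for involutions of the second kind ($P\in\Nil[B,\tau]$ iff $Z(B\ox F_P)\cong F_P\x F_P$, from \cite{A-U-prime}) gives \eqref{np3}$\Leftrightarrow$\eqref{np3b}; transitivity of base change together with $Z(A(\alpha))=Z(A)\ox_R\kappa(\alpha)$ (Proposition~\ref{change-of-base}) and $k(\bar\alpha)=k(\alpha)$ gives \eqref{np3b}$\Leftrightarrow$\eqref{np2}; and applying the already-established equivalences to $(S,\iota)$ gives \eqref{np2}$\Leftrightarrow$\eqref{np4}. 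That bookkeeping is fine once the unconditional equivalence is actually proved.
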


\begin{proof}
  \eqref{np1}$\Rightarrow$\eqref{np3}: Assume $\bar \alpha \not \in
  \Nil[A(\alpha),\s(\alpha)]$. Then there exists $z \in \Sym(A(\alpha),
  \s(\alpha))$ such that $\sign^\mab_{\bar \alpha} \qf{z}_{\s(\alpha)}
  \not = 0$ (see
  \cite[Theorem~6.4]{A-U-Kneb}). Using Remark~\ref{M-ox}, write $z = a \ox
  \frac{1}{\bar r}$, for some $r \in R \setminus \Supp (\alpha)$. 
  Since $\bar r$
  is invertible in $\kappa(\alpha)$, we have
  \[
    0\not=\sign^\mab_{\bar \alpha} \qf{z}_{\s(\alpha)} =
    \sign^\mab_{\bar \alpha} \qf{z{\bar r}^2}_{\s(\alpha)}
    = \sign^\mab_{\bar \alpha} \qf{ar\ox 1}_{\s \ox \id_{\kappa(\alpha)}} =
    \sign^\CM_\alpha \qf{ar}_\s,
  \]
  contradicting that $\alpha \in \Nil[A,\s]$.

  \eqref{np3}$\Rightarrow$\eqref{np1}: This follows from the fact that
  $\sign^\CM_\alpha h = \sign^\mab_{\bar \alpha} h \ox
    \kappa(\alpha) = 0$ for any hermitian form $h$  over
  $(A,\s)$.

  For the remaining equivalences, recall that if $(B,\tau)$ is a central simple
  algebra with involution of the second kind over a field $F$, then $P \in
  \Nil[B,\tau]$ if and only if $Z(B \ox F_P) \cong F_P \x F_P$, cf.
  \cite[bottom of p.~499 and Def.~2.1]{A-U-prime}.

  The equivalence \eqref{np3}$\Leftrightarrow$\eqref{np3b} follows from this
  observation, and we have \eqref{np3b}$\Leftrightarrow$\eqref{np2} since
  $k(\alpha)$ is a real closure of $\kappa(\alpha)$ at $\bar \alpha$
  and $Z(A(\alpha))=Z(A)\ox_R \kappa(\alpha)$ (by 
  Proposition~\ref{change-of-base}). Finally,
  the equivalence \eqref{np2}$\Leftrightarrow$\eqref{np4} follows from
  \eqref{np1}$\Leftrightarrow$\eqref{np2} applied to $(S, \iota)$ since
  $S=Z(S)=Z(A)$.
\end{proof}

\begin{cor}\label{nil-open-second-kind}
  Assume that $R$ is semilocal and that $\s$ is of unitary type.
  Then there is $d \in R^\x$ such that $\Nil[A,\s] = \mathring H(d)$.
\end{cor}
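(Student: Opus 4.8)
The plan is to reduce the statement to the unitary case's characterisation of nil orderings provided by Lemma~\ref{Nil-product} and Remark~\ref{rem-first-0}, and then repackage the resulting subset of $\Sper R$ as a single Harrison subbasic set $\mathring H(d)$ using the structure of quadratic \'etale algebras over a semilocal connected ring.

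First I would observe that since $\s$ is of unitary type and $R$ is connected, Proposition~\ref{rem:quad-et} gives that $S=Z(A)$ is a quadratic \'etale $R$-algebra and $\iota=\s|_S$ is its standard involution. By Lemma~\ref{Nil-product} (in particular the equivalence \eqref{np1}$\Leftrightarrow$\eqref{np4}, which applies at every $\alpha\in\Sper R$ because $\s$ is unitary everywhere), we have $\Nil[A,\s]=\Nil[S,\iota]$, so it suffices to compute $\Nil[S,\iota]$. Then I would use that $R$ is semilocal: by Section~\ref{sec:quadalg} there exists $\lambda\in S$ with $\lambda^2=d\in R^\x$, $\iota(\lambda)=-\lambda$, and $\{1,\lambda\}$ an $R$-basis of $S$; equivalently $S\cong R[t]/(t^2-d)$. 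The claim will be that $\Nil[A,\s]=\mathring H(d)$ for this $d$ (up to possibly replacing $d$ by $-d$ or adjusting a sign; I would pin down the correct sign during the write-up).

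The heart of the argument is the equivalence between ``$\alpha\in\Nil[S,\iota]$'' and ``$d>0$ at $\alpha$''. Unwinding Lemma~\ref{Nil-product}\eqref{np2}, $\alpha\in\Nil[S,\iota]$ iff $S\ox_R k(\alpha)\cong k(\alpha)\x k(\alpha)$, where $k(\alpha)$ is a real closure of $\kappa(\alpha)$ at $\bar\alpha$. Now $S\ox_R k(\alpha)\cong k(\alpha)[t]/(t^2-\bar d)$, where $\bar d$ denotes the image of $d$ in $k(\alpha)$; since $k(\alpha)$ is real closed, this quotient splits as $k(\alpha)\x k(\alpha)$ precisely when $\bar d$ is a square in $k(\alpha)$, i.e. when $\bar d>0$, i.e. when $d>0$ at $\alpha$ (note $d\in R^\x$ so its image in $\kappa(\alpha)$, hence in $k(\alpha)$, is nonzero and $\ne 0$, so the case $\bar d=0$ does not arise). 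Hence $\Nil[A,\s]=\{\alpha\in\Sper R\mid d>0\text{ at }\alpha\}=\mathring H(d)$, which completes the proof.

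I do not expect a genuine obstacle here — every ingredient is already in the excerpt — but the point requiring a little care is making sure the chain of equivalences in Lemma~\ref{Nil-product} is invoked with the hypothesis ``$\s$ of unitary type at $\Supp(\alpha)$'' satisfied for \emph{every} $\alpha$, which is exactly what ``$\s$ of unitary type'' (as a global hypothesis) delivers, together with Corollary~\ref{change-of-base-bis}. A second minor point is to double-check that one may indeed take a single $\lambda$ (equivalently a single $d\in R^\x$) that works simultaneously: this is precisely the content of the last displayed fact in Section~\ref{sec:quadalg} for semilocal $R$, so no extra work is needed. The only remaining subtlety, the exact sign of $d$, is cosmetic since $\mathring H(d)$ only depends on $d$ up to multiplication by squares and totally positive units, and $d^2\in R^\x$ is a square so $\mathring H(d)=\mathring H(d\cdot d^2)$, etc.; one simply takes whichever of $\pm d$ (or a suitable unit multiple) makes the set come out right.
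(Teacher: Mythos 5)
Your argument is correct and takes essentially the same route as the paper: both identify $S=R\oplus\lambda R$ with $d:=\lambda^2\in R^\x$ (using that $R$ is semilocal) and use Lemma~\ref{Nil-product} to translate $\alpha\in\Nil[A,\s]$ into the splitting of $S\ox_R k(\alpha)$ into $k(\alpha)\x k(\alpha)$, which holds precisely when $d>0$ at $\alpha$. The hedging about possibly replacing $d$ by $-d$ is unnecessary, since your own computation already pins down that $d=\lambda^2$ itself gives $\Nil[A,\s]=\mathring H(d)$, exactly as in the paper.
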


\begin{proof}
  By \cite[Proposition~1.21]{first23}, $S=Z(A)$ is a quadratic \'etale
  $R$-algebra, and for every $\alpha \in \Sper R$, $\s(\alpha)$ is of 
  unitary type at the prime ideal $\Supp(\alpha)$. 
  Furthermore, as recalled in Section~\ref{sec:quadalg}, $S = R \oplus
  \lambda R$ for some $\lambda \in S$ such that $d:=\lambda^2 \in R^\x$.  In particular, for any $\alpha \in \Sper R$, 
  $Z(A(\alpha)) = S \ox_R \kappa(\alpha)
  = \kappa(\alpha) \oplus (\lambda \ox 1) \kappa(\alpha) =
  \kappa(\alpha)(\sqrt{\bar d})$, where $\bar d$ is the image of $d$ in
  $R/\Supp(\alpha)$, and where the first equality follows from 
  Proposition~\ref{change-of-base}.
  Therefore, for $\alpha \in \Sper R$:
  \begin{align*}
    \alpha \in \Nil[A,\s] 
      &\Leftrightarrow Z(A(\alpha))\ox_{\kappa(\alpha)} k(\bar\alpha)
      \cong k(\bar\alpha)\x k(\bar\alpha) \text{ by Lemma~\ref{Nil-product}} \\
      &\Leftrightarrow \kappa(\alpha)(\sqrt{\bar d})\ox_{\kappa(\alpha)}
      k(\bar\alpha)
      \cong k(\bar\alpha)\x k(\bar\alpha)\\      
      &\Leftrightarrow \sqrt{\bar d} \in k(\bar\alpha)
      \Leftrightarrow \bar d \in \bar \alpha 
      \Leftrightarrow d \in \alpha\\
      &\Leftrightarrow \alpha\in \mathring H(d) \ \text{since } d 
      \text{ is invertible}.\qedhere
  \end{align*}
\end{proof}

\begin{rem}\label{S-connected}
  Recall from Section~\ref{sec:quadalg} that when $R$ is connected, then either
  $S$ is connected or $S \cong R \x R$. Moreover, if
  $\Sper R \setminus \Nil[A,\s]\not=\emptyset$, we cannot have $S \cong R \x R$
  by Lemma~\ref{Nil-product}. 
  Therefore, if $R$ is connected and $\Sper R \setminus 
  \Nil[A,\s]\not=\emptyset$, then $S$ is also connected.
\end{rem}

\begin{lemma}\label{skew-inv-csa-no-pc}
  Let $F$ be a field and let $(B,\tau)$ be a central simple $F$-algebra
  with involution of the first kind. If $\Nil[B,\tau] \not = \emptyset$, then
  $\deg B$ is even. 
\end{lemma}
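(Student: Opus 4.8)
The plan is to use the hypothesis $\Nil[B,\tau]\neq\emptyset$ to produce an ordering $P$ on $F$ at which $\tau$ behaves badly, and to then read off parity information about $\deg B$ by passing to a real closure $F_P$. First I would fix $P\in\Nil[B,\tau]\subseteq X_F$ and let $F_P$ be a real closure of $F$ at $P$; the key fact to exploit is that $F_P$ is real closed, so the only central simple $F_P$-algebras are $M_n(F_P)$ and $M_n(H)$ (with $H$ the quaternions over $F_P$), and on each of these the possible involutions of the first kind are completely classified: orthogonal involutions are adjoint to a symmetric bilinear form, symplectic ones to an alternating form, and the Sylvester-type signature computation is explicit.

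The main step is the following. Since $\tau$ is of the first kind, $Z(B)=F$ and $\tau\otimes\mathrm{id}$ is a first-kind involution on $B\otimes_F F_P$. By Remark~\ref{rem:nil} (or directly by \cite[Theorem~6.4]{A-U-Kneb}), $P\in\Nil[B,\tau]$ means $\sign^{\CM_P}_P\langle b\rangle_\tau=0$ for all $b\in\Sym(B,\tau)$, equivalently every diagonal hermitian form over $(B\otimes_F F_P,\tau\otimes\mathrm{id})$ has vanishing signature at the unique ordering of $F_P$. I would argue that this forces the split component of $B\otimes_F F_P$ to carry a symplectic involution: if $B\otimes_F F_P\cong M_n(F_P)$ with $\tau\otimes\mathrm{id}$ orthogonal, then $\langle 1\rangle_{\tau\otimes\mathrm{id}}$ corresponds to a positive-definite symmetric form and has nonzero signature, a contradiction; if $B\otimes_F F_P\cong M_n(H)$, a parallel computation (using that the reduced norm form of $H$ over the real closed field $F_P$ is anisotropic, hence definite) again produces a symmetric-type hermitian form of nonzero signature unless the involution is symplectic. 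Hence $\tau\otimes\mathrm{id}$ is symplectic on $B\otimes_F F_P$.

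Now a symplectic involution of the first kind can only exist on a central simple algebra of even degree — an alternating form exists only in even dimension, and more precisely a symplectic involution on a central simple $F_P$-algebra $C$ forces $\deg C$ even (this is standard, e.g.\ \cite[2.A]{BOI}, since over a splitting field a symplectic involution becomes the adjoint of an alternating bilinear form). Since $\deg(B\otimes_F F_P)=\deg B$, we conclude $\deg B$ is even.

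The hard part will be pinning down cleanly, without excessive case analysis, the implication ``$P\in\Nil[B,\tau]$ with $\tau$ of the first kind $\Rightarrow$ $\tau$ is symplectic after scalar extension to $F_P$''; the cleanest route is probably to invoke Remark~\ref{rem:nil}, which already records that for orthogonal $\sigma$ and symplectic $\tau$ on the same algebra over a field one has $\Nil[A,\sigma]=X_F\setminus\Nil[A,\tau]$ — so a nil ordering for a first-kind involution forces that involution to be the symplectic one at that ordering (there being no orthogonal involution on a division algebra $\neq F$ that is split-then-matrix in the relevant sense), and the only obstruction to carrying this out is handling the division-algebra part of $B\otimes_F F_P$ correctly, which is why reducing to the real closure (where the only division algebra is $H$) is essential.
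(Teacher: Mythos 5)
There is a genuine gap. Your pivotal claim that $P\in\Nil[B,\tau]$ forces $\tau\otimes\id$ to be symplectic on $B\otimes_F F_P$ is false, and the step where you argue the quaternionic case is where it breaks: if $B\otimes_F F_P\cong M_\ell\bigl((-1,-1)_{F_P}\bigr)$ and $\tau$ is \emph{orthogonal}, then hermitian forms over $(B\otimes_F F_P,\tau\otimes\id)$ correspond under hermitian Morita equivalence to \emph{skew}-hermitian forms over $((-1,-1)_{F_P},\gamma)$, and all of these have signature zero --- this is precisely one of the nil configurations in \cite[Definition~3.7]{A-U-Kneb}, not a contradiction. So your ``parallel computation producing a form of nonzero signature unless the involution is symplectic'' does not exist in this case, and since the type of a first-kind involution is unchanged by scalar extension, an orthogonal $\tau$ cannot become symplectic over $F_P$. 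Remark~\ref{rem:nil} does not rescue the step either: it only says that for an orthogonal $\sigma$ and a symplectic $\tau$ on the \emph{same} algebra the sets $\Nil[A,\sigma]$ and $\Nil[A,\tau]$ are complementary in $X_F$; it does not convert a nil ordering for an orthogonal involution into symplecticity of that involution.

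The correct dichotomy (and the paper's argument) is read off directly from \cite[Definition~3.7]{A-U-Kneb}: for a first-kind involution, $P\in\Nil[B,\tau]$ means either $\tau$ is orthogonal and $B\otimes_F F_P\cong M_\ell\bigl((-1,-1)_{F_P}\bigr)$, or $\tau$ is symplectic and $B\otimes_F F_P\cong M_\ell(F_P)$. In the first case $\deg B=\deg(B\otimes_F F_P)=2\ell$ is even for the trivial reason that the quaternion factor contributes a factor of $2$; in the second case $\deg B$ is even because a symplectic involution exists only on algebras of even degree (\cite[Proposition~2.6]{BOI}). Your split-orthogonal computation (signature of $\qf{1}$ nonzero) is the correct half of this analysis, but the proof as written hinges on a false assertion in the $M_\ell(H)$ case and therefore does not go through; replacing ``hence $\tau\otimes\id$ is symplectic'' by the two-case reading above repairs it.
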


\begin{proof}
  Assume that $\deg B$ is odd. Then $B$ is split and $\tau$ is orthogonal
  by \cite[Corollary~2.8(1)]{BOI}. It follows that $\Nil[B,\tau] = \emptyset$ 
  by \cite[Definition~3.7]{A-U-Kneb}.
\end{proof}

\begin{lemma}\label{skew-inv-no-pc} 
  Assume that $R$ is semilocal 
  connected and that $\s$ is of orthogonal or symplectic
  type. If $\Nil[A,\s] \not = \emptyset$, then
  $\Skew(A^\x,\s) \not = \emptyset$.
\end{lemma}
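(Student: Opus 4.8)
The plan is to reduce to the local (central simple) situation and produce a skew-symmetric unit by gluing local data across the finitely many maximal ideals of $R$, using Proposition~\ref{first-ideals}\eqref{first-ideals}. Since $\Nil[A,\s]\neq\emptyset$, pick $\alpha_0\in\Nil[A,\s]$ with support $\fp_0=\Supp(\alpha_0)$. First I would show that $\deg A$ (which is constant, as $R$ is connected) is even: by Lemma~\ref{Nil-product} we have $\bar\alpha_0\in\Nil[A(\alpha_0),\s(\alpha_0)]$, and $(A(\alpha_0),\s(\alpha_0))$ is a central simple algebra with involution of the first kind (since $\s$ is orthogonal or symplectic, $Z(A)=R$ by Proposition~\ref{rem:quad-et}); applying Lemma~\ref{skew-inv-csa-no-pc} to it gives $\deg A(\alpha_0)=\deg A$ even. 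Thus $n:=\deg A$ is even.

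Next I would work maximal ideal by maximal ideal. Let $\fm_1,\dots,\fm_r$ be the maximal ideals of $R$ and set $F_i:=R/\fm_i$, $\bar A_i:=A\ox_R F_i=A/A\fm_i$, with induced involution $\bar\s_i$ of the first kind; this is a central simple $F_i$-algebra with involution by Corollary~\ref{change-of-base-bis}. Over a field, a central simple algebra of even degree with an involution of the first kind always carries a skew-symmetric unit: indeed $\bar A_i\cong M_{n_i}(D_i)$ with $D_i$ a division algebra, and since $\deg\bar A_i=n$ is even, a short argument (e.g. using that $\dim_{F_i}\Skew(\bar A_i,\bar\s_i)$ is $\binom n2$ or $\binom{n+1}2$ and that the non-units form a proper Zariski-closed subset, or more concretely exhibiting an explicit block-diagonal skew unit built from $2\times 2$ blocks $\begin{pmatrix}0&1\\-1&0\end{pmatrix}$ when $\bar\s_i$ is the transpose-type involution, and adjusting by Skolem–Noether in general) yields $s_i\in\Skew(\bar A_i^\x,\bar\s_i)$. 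I would prefer to cite \cite[Proposition~2.6 and its proof]{BOI} or the standard fact that orthogonal and symplectic involutions in even degree coexist, which forces a skew unit to exist, rather than grinding through the matrix computation.

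Then I would lift and glue. By the Chinese Remainder Theorem, $A/A J(R)=A/\bigl(\bigcap_i A\fm_i\bigr)\cong\prod_{i=1}^r\bar A_i$ (using Proposition~\ref{first-ideals}\eqref{first-ideals1} and that $J(R)=\bigcap\fm_i$), compatibly with the involutions. Choose $s\in A$ mapping to $(s_1,\dots,s_r)$ under this isomorphism. Since each $s_i$ is skew, $\s(s)+s$ lies in $A J(R)=J(A)$ by Proposition~\ref{first-ideals}(2); replacing $s$ by $\tfrac12(s-\s(s))$ we may assume $s\in\Skew(A,\s)$ exactly, and this does not change its image in $\prod\bar A_i$ since each $s_i$ was already skew. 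Finally, $s$ is a unit in $A$: its image in $A/A\fm_i=\bar A_i$ equals $s_i\in\bar A_i^\x$ for every maximal ideal $\fm_i$ of $R$, so $s\in A^\x$ by Proposition~\ref{first-ideals}(3). Hence $s\in\Skew(A^\x,\s)$ and $\Skew(A^\x,\s)\neq\emptyset$.

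The main obstacle is the local statement: guaranteeing that each $\bar A_i$ admits a skew-symmetric \emph{unit} (not merely a nonzero skew element). For this one genuinely needs $\deg\bar A_i$ even, and the cleanest route is the fact that over a field an even-degree central simple algebra carrying an involution of the first kind carries involutions of both orthogonal and symplectic type, and that an orthogonal involution $\bar\s_i$ on an even-degree algebra differs from a symplectic one by $\Int(s_i)$ with $s_i$ a skew unit (and symmetrically). A minor subtlety to check is that the degree $n$ is the \emph{same} even number at each $\fm_i$, which holds because $R$ is connected so $\deg A$ is locally constant, hence constant; and that the reduction to evenness of $\deg A$ via Lemma~\ref{skew-inv-csa-no-pc} only requires \emph{one} nil ordering, which we have by hypothesis.
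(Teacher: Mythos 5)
Your argument is correct, and its first half is exactly the paper's: use the nil ordering together with Lemma~\ref{Nil-product} and Lemma~\ref{skew-inv-csa-no-pc} to get even degree at $\Supp(\alpha_0)$, then constancy of the rank over the connected ring $R$ to get even degree at every prime. Where you diverge is the final existence step: the paper simply invokes \cite[Lemma~1.26]{first22} with $\varepsilon=1$, which is precisely a statement producing a skew-symmetric unit in an Azumaya algebra with non-unitary involution over a semilocal ring once the degree is even at all residue fields, whereas you reprove this special case by hand — reduce modulo each maximal ideal (Proposition~\ref{change-of-base} and Corollary~\ref{change-of-base-bis} give central simple algebras with involution of the first kind over the residue fields, of the same even degree), use the classical field-level fact that an even-degree central simple algebra with an involution of the first kind in characteristic $\neq 2$ carries a skew-symmetric unit, glue via $A/AJ(R)\cong\prod_i A/A\fm_i$, correct the lift by $s\mapsto\tfrac12(s-\s(s))$ (which does not disturb the already-skew images), and conclude invertibility from Proposition~\ref{first-ideals}(3). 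This buys self-containedness at the cost of length; the paper's citation is shorter but hides exactly the gluing you carry out. One small caveat in your local step: the Zariski-density variant you mention needs infinite residue fields, so it cannot be used as stated when some $R/\fm_i$ is finite; but this is harmless, since finite residue fields force the reduced algebra to be split (Wedderburn), where your explicit block construction (or the standard coexistence of orthogonal and symplectic involutions in even degree, via \cite[Proposition~2.7]{BOI}) applies, so there is no genuine gap.
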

\begin{proof}
  Let $\alpha \in \Nil[A,\s]$. Then $\bar\alpha\in \Nil[A(\alpha), \s(\alpha)]$
  by Lemma~\ref{Nil-product}. Hence,  $\deg A \ox_R
  \kappa(\alpha)$ is even by Lemma~\ref{skew-inv-csa-no-pc}. 
  Since $R$ is connected, the rank of $A$ is constant
  (since $A$ is a projective $R$-module, see \cite[p.~12]{Saltman99}). It
  follows that $\deg A \ox_R \Qf(R/\fp) $ is even for every 
  $\fp \in \Spec R$. We can then apply \cite[Lemma~1.26]{first23} with 
  $\varepsilon = -1$.
\end{proof}

\subsection{Elements of maximal signature}
In \cite{A-U-pos}, working with central simple algebras with involution, we
investigated the maximal value that the signature at $P \in X_F$ can take (when it is
non-zero) when applied to one-dimensional nonsingular forms. We found that this
maximal value is the matrix size of the algebra over its skew-field part
after scalar extension to $F_P$ 
(and linked it to the existence of
positive involutions), cf. \cite[Proposition~6.7, Theorem~6.8]{A-U-pos}.

We are interested in the same question when $(A,\s)$ is an Azumaya algebra with
involution. More precisely, we will show in Corollary~\ref{same-max}
that if $\alpha\not\in\Nil[A,\s]$, then
this maximal 
value  
is the matrix size $n_{\bar \alpha}$
of  $A\ox_R k(\alpha)$ over its skew-field part 
(i.e., $A\ox_R k(\alpha)\cong M_{n_{\bar \alpha}}(D_{\bar \alpha})$ using the
notation from Remark~\ref{sign-bounded}). We first introduce some notation:

\begin{defi}
  If $(B,\tau)$ is an Azumaya algebra with involution over $R$, and
  $\alpha \in \Sper R$, we define:
  \begin{align*}
    m_\alpha(B,\tau) &:= \max\{\sign^\CM_\alpha \qf{b}_\tau \mid b \in
    \Sym(B^\x,\tau)\}\\
    \intertext{and}
    M^\CM_\alpha(B,\tau) &:= \{b \in \Sym(B^\x,\tau) \mid \sign^\CM_\alpha
      \qf{b}_\tau = m_\alpha(B,\tau)\}.
  \end{align*}
Observe that $m_\alpha(B,\tau)$ is independent
  of the choice of the Morita equivalence $\CM_{\bar\alpha}$ 
  (cf. Definition~\ref{def:sig_h}), 
  and  is finite by Remark~\ref{sign-bounded}. 
\end{defi}

We introduce some notation that will be used in the next four results. For
$\alpha \in \Sper R$, define
\begin{align*}
  \CCS_\alpha(A,\s) := \bigcup\{D_{(A,\s)} \qf{a_1,\ldots, a_k}_\s \mid k \in 
  \N,\ a_i \in &\Sym(A,\s),\\
  &a_i \ox 1 \in  M^{\CM_{\bar\alpha}}_{\bar \alpha}(A(\alpha), \s(\alpha))\}.
\end{align*}

Furthermore, for $\fp \in \Spec R$, we
denote by $\pi_\fp$ the canonical projection from $R$ to $R/\fp$ and by
$\pi_{A\fp}$ the canonical projection from $A$ to $A/A\fp$. Then, denoting by
$\bar \s$ the involution induced by $\s$ on $A/A\fp$, we define
\begin{align*}
  \CCS'_\fp := \bigcup \{D_{(A/A\fp, \bar \s)} \qf{\pi_{A\fp}(a_1), \ldots,
  \pi_{A\fp}(a_k)}_{\bar \s} \mid k \in \N,\ &a_i \in \Sym(A,\s),\\
  & a_i \ox 1 \in
  M^{\CM_{\bar\alpha}}_{\bar \alpha}(A(\alpha),\s(\alpha))\}.
\end{align*}

\begin{lemma}\label{sl-case-part1}
  Let $\alpha \in \Sper R$.  
  Then there is an element $b \in \Sym(A,\s)$ such that $b \ox 1 \in
  M^{\CM_{\bar\alpha}}_{\bar \alpha}(A(\alpha), \s(\alpha))$. In particular $b \in \CCS_\alpha(A,\s)$.
\end{lemma}
\begin{proof}
  Let $c \in \Sym(A(\alpha)^\x, \s(\alpha))$ be such that 
  $\sign^{\CM_{\bar
  \alpha}}_{\bar
  \alpha} \qf{c}_{\s(\alpha)} = m_{\bar \alpha}(A(\alpha), \s(\alpha))$. By
  Remark~\ref{M-ox}, there are $b_0 \in A$ and $r \in R \setminus \Supp(\alpha)$
  such that $c = b_0 \ox \frac{1}{\bar r}$.  Let
  $b_1:=\frac{1}{2}(b_0+\s(b_0))$. Then $b_1 \in \Sym(A,\s)$ and
  \[b_1 \ox \frac{1}{\bar r} = \frac{1}{2}(b_0 \ox \frac{1}{\bar r} + \s(b_0)
  \ox \frac{1}{\bar r}) = \frac{1}{2}(c+ \s(\alpha)(c)) = c.\]

  We take $b := rb_1$. Then $b \in \Sym(A,\s)$ and, since $\bar r \in
  \kappa(\alpha)^\x$, we have $b \ox 1 = \bar r^2 c \in
  A(\alpha)^\x$, and
  \[\sign^{\CM_{\bar
  \alpha}}_{\bar \alpha}
  \qf{b \ox 1}_{\s(\alpha)} = \sign^{\CM_{\bar
  \alpha}}_{\bar \alpha}
  \qf{\bar r^2 c}_{\s(\alpha)} = m_{\bar \alpha}(A(\alpha), \s(\alpha)),\]
  so that $b \ox 1 \in M^{\CM_{\bar\alpha}}_{\bar \alpha}(A(\alpha), \s(\alpha))$.
\end{proof}

\begin{lemma}\label{with-cofinal}
  Assume that $R = F$ is a field and let $P\in X_F\setminus\Nil[A,\s]$. 
  Let $a \in \Sym(A,\s)$ be such that
  $\sign^\CM_P \qf{a}_\s=m_P(A,\s)$. Then:
  \begin{enumerate}
    \item $a$ is invertible in $A$;
    
    \item There is $\mu \in P \setminus \{0\}$
  such that $\sign^\CM_P \qf{a-r}_\s=m_P(A,\s)$
  for every $r \in P$
  such that $r \le_P \mu$.  

  \end{enumerate}
\end{lemma}

\begin{proof}
  By \cite[Proposition~6.7]{A-U-pos} we have $m_P(A,\s)=n_P$. 
  We use the notation from Remark~\ref{rem:nil}.
  Since
  $P \not \in \Nil[A,\s]$, it follows from the computation of
  $\CM$-signatures (cf. the beginning of Section~\ref{sec3}) 
  that $\sign^\CM_P \qf{a}_\s$ is equal
  to the Sylvester signature of the form 
  $\qf{\Phi_P^{-1} (a \ox 1)}_{{\vt_P}^t}$, where $\Phi_P^{-1} (a \ox 1) \in 
  \Sym(M_{n_P}(D_P), {\vt_P}^t)$. Since $D_P
  \in \{F_P, F_P(\sqrt{-1}), (-1,-1)_{F_P}\}$ and $\vt_P$ is the canonical
  involution on $D_P$, the matrix $\Phi_P^{-1} (a \ox 1)$ can be diagonalized
  by congruences (which does not change the Sylvester signature), so we can
  assume that $\Phi_P^{-1} (a \ox 1)$ is diagonal. Since it has Sylvester
  signature $n_P$, its diagonal elements are all positive, i.e., $\Phi_P^{-1}
  (a \ox 1) \in \PD_{n_P}(D_P, \vt_P)$. In particular, $\Phi_P^{-1} (a \ox 1)$ 
  is invertible. Therefore $a \ox 1$
  is not a zero divisor, and $a$ is not a zero divisor in $A$. It follows that
  $a$ is invertible since $A$ is Artinian. This proves (1).
  
  For (2): The element $a\ox 1$ is in $\Phi_P\cdot
  \PD_{n_P}(D_P, \vt_P)$, which is an open subset
  of $\Sym (M_{n_P}(D_P), {\vt_P}^t)$ by Lemma~\ref{PD-open}.
  It follows that there is $\ve>0$ in $F_P$ such that for all
  $M\in \Sym (M_{n_P}(D_P),
  {\vt_P}^t)$ that satisfy $\| M\|_\op<\ve$ we have
  $a\ox 1-M\in \Phi_P\cdot \PD_{n_P}(D_P, \vt_P)$.
  Taking $\mu\in F_P$ such that $0<\mu < \ve$, we obtain
  $\|\mu I_{n_P}\|_\op<\ve$. Hence,
  $a\ox 1-\mu I_{n_P} \in \Phi_P\cdot \PD_{n_P}(D_P, \vt_P)$.
  In particular, the signature of the form $\qf{a\ox 1 -\mu I_{n_P}}_{\s\ox\id}$
  equals $n_P$.

  However, $F$ is cofinal in
  $F_P$ by Lemma~\ref{cofinal}, so we can find such a $\mu$ in $F$. The
  choice of $\mu$
  guarantees
  that $\sign^\CM_P \qf{a-r}_\s=n_P=m_P(A,\s)$
  whenever $r \in P$, $r \le_P \mu$.  
\end{proof}

\begin{lemma}\label{improvement}
  Let $\fm$ be a maximal ideal of $R$ and let $\alpha \in \Sper R$ be such that
  $\Supp \alpha \subseteq \fm$. Assume that
  \begin{equation}\label{property-Am1}
    \forall a \in \Sym(A,\s) \quad a \ox_R 1_{\kappa(\alpha)} \in M^{\CM_{\bar\alpha}}_{\bar \alpha}(A(\alpha), \s(\alpha))
    \text{ implies } a \in A\fm.
  \end{equation}
  Then property  \eqref{property-Am1} is preserved under quotients by
  $\Supp \alpha$ and, if $\Supp \alpha=\{0\}$, under  
  localization at $\fm$. More
  precisely:
  \begin{enumerate}
    \item Let $R_1 := R/\Supp \alpha$. Then
    property \eqref{property-Am1} holds for $(A_1, \s_1) := 
    (A \ox_R R_1,\s\ox_R\id_{R_1})$ and the ordering $\alpha_1$ induced by
      $\alpha$ on $R_1$ together with 
      the maximal ideal $\fm_1 := \fm/\Supp \alpha$
      of $R_1$, i.e.,
      \begin{equation*}
        \forall a \in \Sym(A_1,\s_1) \quad a \ox_{R_1} 1_{\kappa(\alpha_1)} \in M^{\CM_{\bar\alpha_1}}_{\bar
        \alpha_1}(A_1(\alpha_1),\s_1(\alpha_1)) \text{ implies } a \in A_1\fm_1.
      \end{equation*}
      Furthermore, if $\alpha\in \Sperm R$, then $\alpha_1 \in \Sperm R_1$. 
    \item Assume that $\Supp \alpha = \{0\}$. Then
      property \eqref{property-Am1} holds for $(A_2, \s_2) := 
      (A \ox_R R_\fm,\s\ox_R \id_{R_\fm})$ and 
      the ordering $\alpha_2$ induced by $\alpha$ on $R_\fm$ together
      with the unique maximal ideal $\fm_2$ of $R_\fm$, i.e.,
      \begin{equation*}
        \forall a \in \Sym(A_2,\s_2) \quad a \ox_{R_\fm} 1_{\kappa(\alpha_2)}
         \in M^{\CM_{\bar\alpha_2}}_{\bar
        \alpha_2}(A_2(\alpha_2),\s_2(\alpha_2)) \text{ implies } a \in A_2\fm_2.
      \end{equation*}
      Furthermore, if $\alpha\in \Sperm R$, then $\alpha_2 \in \Sperm R_\fm$. 
  \end{enumerate}
\end{lemma}
\begin{proof}
(1) We have natural maps
      \[R \rightarrow R_1 = R/\Supp \alpha \rightarrow \Qf(R_1) =
      \kappa(\alpha) = \kappa(\alpha_1)\]
      with $\bar \alpha = \bar \alpha_1$,
      and thus
      \[A \rightarrow A_1 = A \ox _R R_1 \rightarrow  
      A \ox_R \Qf(R_1)=A(\alpha) = A_1(\alpha_1),\]
      while
      \[
        \s_1(\alpha_1) := \s_1 \ox_{R_1} \id_{\kappa(\alpha_1)} 
          = (\s \ox_R \id_{R_1}) \ox_{R_1} \id_{\kappa(\alpha_1)}
          = \s \ox_R \id_{\kappa(\alpha_1)}
          = \s(\alpha).
      \]

      Let $b \in \Sym(A_1,\s_1)$ be such that 
      \[
      b \ox_{R_1}
      1_{\kappa(\alpha_1)} \in
      M^{\CM_{\bar\alpha_1}}_{\bar \alpha_1}(A_1(\alpha_1), \s_1(\alpha_1)) = 
      M^{\CM_{\bar\alpha}}_{\bar
      \alpha}(A(\alpha), \s(\alpha)).
      \]
      Then $b = c \ox_R 1_{R_1}$ for
      some $c \in A$  (the argument is similar to Remark~\ref{M-ox})  and
      \[b \ox_{R_1} 1_{\kappa(\alpha_1)} = c \ox_R 1_{R_1} \ox_{R_1}
      1_{\kappa(\alpha_1)} = c \ox_R 1_{\kappa(\alpha)}.\]
      Therefore $c \ox_R
      1_{\kappa(\alpha)} \in M^{\CM_{\bar\alpha}}_{\bar \alpha}(A(\alpha), \s(\alpha))$
       and thus $c \in A\fm$ by \eqref{property-Am1}. It follows that $b = c
      \ox_R 1_{R_1} \in A\fm \ox_R 1_{R_1} \subseteq (A \ox_R R_1) (\fm / \Supp 
      \alpha)= A_1 \fm_1$
      (the inclusion follows from $am \ox 1 = (a \ox 1) (m \ox 1) = (a \ox
      1) (1 \ox (m + \Supp \alpha))$).
      
      The statement about the maximality of $\alpha_1$ follows from the fact
      that the homeomorphism in \cite[Proposition~3.3.11]{KSU} clearly
      preserves inclusions.

(2) Note that $\alpha_2$ is indeed an ordering on $R_\fm$ since $(R
      \setminus \fm) \cap \Supp \alpha = \emptyset$, cf. 
      \cite[Proposition~3.3.10]{KSU}. We also have
      \[\alpha_2 = \Big\{\dfrac{r}{s^2} \,\Big|\, r \in \alpha,\ s \in R 
      \setminus
      \fm\Big\}\]
      (by \cite[Proof of Proposition~3.3.10]{KSU})
      and $\Supp \alpha_2 = \{0\}$, so that $\kappa(\alpha_2) = \Qf(R_\fm)$.
      Observe that the map $R \rightarrow \Qf(R)$, which is the first step in
      the computation of signatures of elements of $\Sym(A,\s)$
       (since $\Supp
      \alpha = \{0\}$, cf. Definition~\ref{def:sig_h}) 
      factors through $R_\fm$, giving  
      $R \rightarrow R_\fm \rightarrow
      \Qf(R)$, with $\Qf(R) = \Qf(R_\fm)$, i.e., $\kappa(\alpha) =
      \kappa(\alpha_2)$. Finally, a direct verification shows that $\bar \alpha =
      \bar \alpha_2$.

      Let $b \in \Sym(A_2,\s_2)$ be such that $b \ox_{R_\fm}
      1_{\kappa(\alpha_2)} \in M^{\CM_{\bar\alpha_2}}_{\bar \alpha_2}(A_2(\alpha_2), \s_2(\alpha_2))$. Then
      $b = c \ox_R \dfrac{1}{s}$ for some $s \in R\setminus \fm$ 
      (the argument is
      again similar to Remark~\ref{M-ox}). 
      Since $s$ is
      invertible in $R_\fm$, $b$ has the same signature at $\alpha_2$ as $bs^2 =
      cs \ox_R 1_{R_\fm}$, so that $(cs \ox_R 1_{R_\fm}) \ox_{R_\fm}
      1_{\kappa(\alpha_2)} \in M^{\CM_{\bar\alpha_2}}_{\bar \alpha_2}(A_2(\alpha_2), \s_2(\alpha_2))$.

      We have $ (cs \ox_R 1_{R_\fm})
      \ox_{R_\fm} 1_{\kappa(\alpha_2)}=cs \ox_R 1_{\kappa(\alpha)}$, and thus
      \begin{align*}
        cs \ox_R 1_{\kappa(\alpha)} \in & M^{\CM_{\bar\alpha_2}}_{\bar \alpha_2}(A_2(\alpha_2),
          \s_2(\alpha_2)) \\
        & = M^{\CM_{\bar\alpha_2}}_{\bar \alpha_2}((A \ox_R R_\fm) \ox_{R_\fm}
          \Qf(R_\fm), (\s \ox \id_{R_\fm}) \ox \id_{\kappa(\alpha_2)}) \\
        &= M^{\CM_{\bar\alpha}}_{\bar \alpha}(A \ox_R \kappa(\alpha), \s \ox \id_{\kappa(\alpha)}).
      \end{align*}
      By property \eqref{property-Am1}, we obtain $cs \in A\fm$. Therefore $cs
      \ox_R 1_{R_\fm} \in A\fm \ox_R 1_{R_\fm} \subseteq (A \ox_R R_\fm) \fm_2$
      (the inclusion follows from $am \ox_R 1_{R_\fm} = (a \ox_R 1_{R_\fm})(m
      \ox_R 1_{R_\fm}) = (a \ox_R 1_{R_\fm}) (1 \ox_R m)$) and thus 
      $b=c \ox_R
      \dfrac{1}{s} = (cs \ox_R 1_{R_\fm}) \dfrac{1}{s^2} \in 
      (A \ox_R R_\fm) \fm_2$.
      
      The statement about the maximality of $\alpha_2$ follows from the fact
      that the homeomorphism in \cite[Proposition~3.3.10]{KSU} clearly
      preserves inclusions.
\end{proof}

\begin{lemma}\label{rescue}
  Let $\fm$ be a maximal ideal of $R$ and let
  $\alpha \in \Sperm R$.  Then
  there is $a \in \Sym(A,\s)$ such that $a \ox 1 \in M^{\CM_{\bar\alpha}}_{\bar \alpha}(A(\alpha),
  \s(\alpha))$ and $a \not \in A\fm$.
\end{lemma}
\begin{proof}
  We assume that the conclusion does not hold, so that 
  property~\eqref{property-Am1} of Lemma~\ref{improvement} holds.
  We proceed in four steps:
  \medskip

  \emph{Step~1:} Take $a \in \Sym(A,\s)$ such that $a
      \ox 1 \in M^{\CM_{\bar\alpha}}_{\bar \alpha}(A(\alpha), \s(\alpha))$, cf. 
      Lemma~\ref{sl-case-part1}.
      Then $a \in A\fm$ by property~\eqref{property-Am1}.
      Furthermore, for every $r \in \Supp \alpha$ we have
      $\sign^\CM_\alpha \qf{a+r}_\s=\sign^\CM_\alpha \qf{a}_\s$, cf.
      Definition~\ref{def:sig_h} (since the first step in the computation
      is scalar extension to $\Qf(R/\Supp \alpha)$). Thus, by hypothesis,  
      $a + r \in A\fm$. Since $a
      \in A\fm$, we get that $\Supp \alpha \subseteq A\fm \cap R = \fm$
      (cf. \cite[Corollary~7.1.2(1)]{ford17} for the equality).
\medskip

  \emph{Step~2:}  We first apply Lemma~\ref{improvement}(1) and get that we can 
  assume
      that $\Supp \alpha = \{0\}$, and in particular that $R$ is a domain. It is
      then possible to apply Lemma~\ref{improvement}(2) and we can also assume
      that $R$ is a local domain with maximal ideal $\fm$.
 \medskip
  
    \emph{Step~3:}  Since $R$ is a local domain and $\Supp \alpha = \{0\}$, the 
    following holds:
      
      For every $\frac{r_1}{s_1} \in \Qf(R)$ with $r_1, s_1 \in \alpha
      \setminus \{0\}$, there exists $r \in \alpha\cap R^\x$ such that $r \le_\alpha
      \frac{r_1}{s_1}$.

      Proof of this claim: By the description of $\alpha$ 
      in Theorem~\ref{alpha-semilocal}, there is $r_1' \in \alpha\cap R^\x$ such
      that $r_1' \le_\alpha r_1$, so that $\frac{r_1'}{s_1} \le_\alpha
       \frac{r_1}{s_1}$.
      Observe that if there is $s'_1 \in \alpha\cap R^\x$ such that $s'_1
      \ge_\alpha s_1$, then $\frac{r_1'}{s'_1} \le_\alpha
      \frac{r_1}{s_1}$, and we
      can take $r := r'_1 {s'_1}^{-1}$.

      However, since $R$ is local, such an $s'_1$ exists: If $s_1$ is
      invertible, we take $s'_1 = s_1$. If $s_1$ is not invertible, then $s_1
      \in \fm$. Therefore $1+s_1
      \not \in \fm$, i.e., $1+s_1 \in R^\x$, and of course $1+s_1 \in \alpha$.
      We then take $s'_1 := 1+s_1$. This proves the claim.
\medskip

  \emph{Step~4:} We work in the central simple algebra with involution 
      $(A(\alpha),\s(\alpha)) = (A \ox_R \Qf(R) ,\s\ox\id)$, 
      and denote by $\bar \alpha$ the ordering induced by 
      $\alpha$
      on $\Qf(R)$.  By Lemma~\ref{with-cofinal}(2), 
      there is $\frac{r_1}{s_1} \in
      \bar \alpha \setminus \{0\}$ such that 
      $\qf{a \ox 1 - \dfrac{r_2}{s_2}}_{\s(\alpha)}$ has
      maximal signature at $\bar \alpha$ for every $\frac{r_2}{s_2} \in \bar
      \alpha$ such that $\frac{r_2}{s_2} \le_{\bar \alpha} \frac{r_1}{s_1}$.
      In other words, $a \ox 1 - \frac{r_1}{r_2} \in M^{\CM_{\bar\alpha}}_{\bar \alpha}(A(\alpha),
      \s(\alpha))$.
      
      By Step~3, 
      there is $r \in \alpha\cap R^\x$ such that $r \le_{\bar
      \alpha} \frac{r_1}{s_1}$. In particular we have
      $(a -r) \ox 1 \in M^{\CM_{\bar\alpha}}_{\bar
      \alpha}(A(\alpha), \s(\alpha))$. Therefore, $a-r \in A\fm$ 
      by property~\eqref{property-Am1}
      and thus $r \in
      A\fm$. But this is impossible since $A\fm$ is a proper ideal and $r$ is
      invertible.
\end{proof}

\begin{lemma}\label{not-included}
  Assume that $R$ is semilocal, let $\fm$ be a maximal ideal of $R$, and let
  $\alpha \in \Sperm R$.  Then there
  is $b_\fm \in \CCS_\alpha(A,\s)$ such that $b_\fm + A\fm \in (A/A\fm)^\x$.
\end{lemma}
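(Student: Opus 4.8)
The plan is to work at the single maximal ideal $\fm$, use the local algebra $A/A\fm$, and produce an element of the right form there, then lift it to an element of $\CQ_\alpha(A,\s)$ by a compactness/partition-of-unity argument across the maximal ideals. First I would reduce to $A/A\fm$: since $\fm\not\subseteq\Supp(\alpha)$, the projection $\pi_\fm(\alpha)$ is an ordering on the field $R/\fm$ (or at least picks out a well-defined sign), and $(A/A\fm,\bar\s)$ is a central simple algebra with involution over $R/\fm$ by Corollary~\ref{change-of-base-bis}. The point of Remark~\ref{sign-bounded} and the surrounding discussion is that $m_\alpha(A,\s)=\deg A(\bar\alpha)$ is attained; in fact, by the theory of elements of maximal signature from \cite{A-U-pos}, the set $\CQ'_\fm$ built from the reductions $\pi_{A\fm}(a_i)$ with $a_i\in M_\alpha(A,\s)$ is (a subset of) the positive cone, hence contains a unit of $A/A\fm$: a form $\qf{\pi_{A\fm}(a_1),\dots,\pi_{A\fm}(a_k)}_{\bar\s}$ with each $a_i$ of maximal signature is, after scalar extension to the real closure and Morita equivalence, anisotropic-of-definite-type, so it represents an invertible element. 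This gives an element $c_\fm\in\CQ'_\fm$ with $c_\fm\in(A/A\fm)^\x$.

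Next I would lift. By definition $c_\fm=h_\fm(x,x)$ for some $x$ in the module underlying $\qf{\pi_{A\fm}(a_1),\dots,\pi_{A\fm}(a_k)}_{\bar\s}$, i.e. $c_\fm=\sum_j \bar\s(\bar x_j)\pi_{A\fm}(a_j)\bar x_j$ for suitable $\bar x_j\in A/A\fm$. Choosing lifts $x_j\in A$ and setting $b_\fm:=\sum_j \s(x_j)a_j x_j$, we have $b_\fm\in D_{(A,\s)}\qf{a_1,\dots,a_k}_\s\subseteq\CQ_\alpha(A,\s)$ (since each $a_j\in M_\alpha(A,\s)$), and by construction $b_\fm+A\fm=c_\fm\in(A/A\fm)^\x$. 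This is exactly the claimed statement, so in fact a single maximal ideal argument suffices — no global patching is needed for this lemma (the patching will presumably be done in the next lemma, combining the various $b_\fm$).

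The main obstacle is the first step: showing that a diagonal $\bar\s$-hermitian form over $(A/A\fm,\bar\s)$ all of whose entries have maximal signature at $\pi_\fm(\alpha)$ actually represents a unit. For this I would extend scalars to the real closure $k(\bar\alpha)$, transport across the Morita equivalence $\CM_{\bar\alpha}$ to a form over a real closed field (or over $(-1,-1)$), and observe that "maximal signature" forces each $\qf{a_j}$ to become (after Morita) a positive or negative definite rank-$\deg A(\bar\alpha)$ form; the orthogonal sum is then definite, hence anisotropic, hence its values are all invertible — and invertibility descends from $k(\bar\alpha)$ to $R/\fm$ for elements of the form $h(x,x)$ with $x$ defined over $R/\fm$ (one can also invoke Proposition~\ref{first-ideals}(3) applied to the semilocal ring $R/\fm$, which is a field here, or directly: a nonzero determinant over the real closure of a value defined over the subfield is nonzero over the subfield). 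The relevant input is \cite[Proposition~6.7 and Theorem~6.8]{A-U-pos} (or \cite[Proposition~4.4]{A-U-PS}) characterizing elements of maximal signature, together with the description of $\CM_{\bar\alpha}$ from \cite[Section~3.2]{A-U-Kneb}; once one knows $\CQ'_\fm$ meets $(A/A\fm)^\x$, the lift is purely formal.
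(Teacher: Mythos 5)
Your overall skeleton (produce an invertible element of $\CQ'_\fm$, then lift a representation through $\pi_{A\fm}$ to get $b_\fm\in\CQ_\alpha(A,\s)$) matches the paper's, and the lifting step is exactly the paper's second paragraph. But the step you identify as the ``main obstacle'' rests on a false premise, and it is precisely the hypothesis of the lemma that makes it false. If $\fm\not\subseteq\Supp(\alpha)$ and $\alpha\in\Sperm R$, then $\pi_\fm(\alpha)$ is \emph{not} an ordering on $R/\fm$: if it were, its preimage $\alpha+\fm$ would be an ordering containing the maximal $\alpha$, forcing $\fm\subseteq\alpha\cap-\alpha=\Supp(\alpha)$. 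So in fact $-1\in\pi_\fm(\alpha)$, there is no ``well-defined sign'' on $R/\fm$ attached to $\alpha$, no ring homomorphism $R/\fm\to k(\bar\alpha)$ along which to extend scalars (the real closure $k(\bar\alpha)$ sits over $\kappa(\alpha)=\Qf(R/\Supp(\alpha))$, an unrelated quotient), and no meaningful notion of the reductions $\pi_{A\fm}(a_i)$ being ``definite''. Your plan of transporting the form over $(A/A\fm,\bar\s)$ to the real closure and invoking maximality of signature is the argument that the paper uses at the prime $\Supp(\alpha)$ (Lemma~\ref{sl-case-part1}); at a maximal ideal $\fm\not\subseteq\Supp(\alpha)$ it cannot even be set up. The paper exploits the degeneracy instead: since $-1\in\pi_\fm(\alpha)$, Lemma~\ref{almost-pc-sl} gives $-\pi_{A\fm}(a)\in\CQ'_\fm$ for $a\in M_\alpha(A,\s)$, the universal form $\qf{\pi_{A\fm}(a),-\pi_{A\fm}(a)}_{\bar\s}\simeq\qf{1,-1}_{\bar\s}$ shows $\CQ'_\fm=\Sym(A/A\fm,\bar\s)$, and one then lifts.

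Two smaller remarks. First, your intermediate conclusion (that $\CQ'_\fm$ contains a unit) is nonetheless true for a much simpler reason, which would repair your write-up without any definiteness argument: any $a\in M_\alpha(A,\s)$ lies in $\Sym(A^\x,\s)$, so $\pi_{A\fm}(a)\in\CQ'_\fm\cap(A/A\fm)^\x$ already, and indeed $b_\fm:=a$ itself satisfies the lemma since $a=\qf{a}_\s(1,1)\in\CQ_\alpha(A,\s)$. Second, even setting aside the absence of an ordering, your claim that a diagonal form with entries of maximal signature is anisotropic over $A/A\fm$ would not follow (the residue field $R/\fm$ need not be formally real), though anisotropy was never needed for representing a unit. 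As written, the justification of the key step is not valid and needs to be replaced, either by the paper's $-1\in\pi_\fm(\alpha)$ argument or by the trivial observation above.
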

\begin{proof}
  Let $a \in \Sym(A,\s)$ be such that $a \ox 1 \in M^{\CM_{\bar\alpha}}_{\bar
  \alpha}(A(\alpha),\s(\alpha))$ and $\pi_{A\fm}(a)\not=0$,
  cf. Lemma~\ref{rescue}.
  Then $D_{(A/A\fm, \bar \s)} (k\x \qf{\pi_{A\fm}(a)}_{\bar \s})
  \subseteq \CCS'_\fm$ for all $k\in\N$ by definition of $\CCS'_\fm$.
  Since $(A/A\fm, \bar \s)$ is a central simple algebra with
  involution, \cite[Lemma~2.4]{A-U-pos} applies and there is $\ell \in \N$ such
  that $\ell \x \qf{\pi_{A\fm}(a)}_{\bar \s}$ represents an invertible
  element $b'$.
  Since $b' \in \CCS'_\fm$ and $\pi_{A\fm}$ is surjective, we have
  \[b' = \sum_{j=1}^\ell \bar \s(\pi_{A\fm}(x_j)) \pi_{A\fm}(a) \pi_{A\fm}(x_j),\]
  with  $x_j \in A$. Therefore, we take $b_\fm =
  \sum_{j=1}^\ell \s(x_j) a x_j$.
\end{proof}

\begin{prop}\label{sl-max-sign}
  Assume that $R$ is semilocal and let $\alpha\in \Sperm R$.  Then there 
  are invertible elements in $\CCS_\alpha(A,\s)$. Furthermore, every invertible 
  element $a\in \CCS_\alpha(A,\s)$
  satisfies  $\sign^\CM_\alpha \qf{a}_\s = m_{\bar
  \alpha}(A(\alpha), \s(\alpha))$. 
\end{prop}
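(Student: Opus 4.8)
The plan is to upgrade the element $b$ furnished by Lemma~\ref{sl-case-part1} into a genuine unit of $A$ without disturbing its $\CM$-signature at $\alpha$. If $\alpha\in\Nil[A,\s]$ then $\bar\alpha\in\Nil[A(\alpha),\s(\alpha)]$ by Lemma~\ref{Nil-product}, so $m_{\bar\alpha}(A(\alpha),\s(\alpha))=0$ and $a=1$ does the job since $\sign^\CM_\alpha\qf{1}_\s=0$. So assume from now on that $\alpha\notin\Nil[A,\s]$, and fix, using Lemma~\ref{sl-case-part1}, an element $b\in\CQ_\alpha(A,\s)$ with $b\ox 1\in A(\alpha)^\x$ and $\sign^\CM_\alpha\qf{b}_\s=m_{\bar\alpha}(A(\alpha),\s(\alpha))$. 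The only point to fix is that $b$ need not lie in $A^\x$.

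First I would patch $b$ at the finitely many maximal ideals of $R$ at which it fails to be invertible, using the Chinese Remainder Theorem together with Lemma~\ref{not-included}. Let $\fm_1,\dots,\fm_n$ be the maximal ideals of $R$ and put $J=\{\,j : b+A\fm_j\notin (A/A\fm_j)^\x\,\}$. If $j\in J$ then $\fm_j\neq\Supp(\alpha)$: otherwise $\Supp(\alpha)$ would be maximal, $\kappa(\alpha)=R/\fm_j$, hence $A(\alpha)=A/A\fm_j$ by Proposition~\ref{first-ideals}\eqref{first-ideals1}, and $b\ox 1\in A(\alpha)^\x$ would give $b+A\fm_j\in (A/A\fm_j)^\x$. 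Thus $\fm_j\not\subseteq\Supp(\alpha)$, and Lemma~\ref{not-included} yields $b_j\in\CQ_\alpha(A,\s)$ with $b_j+A\fm_j\in (A/A\fm_j)^\x$. Choose $e_0,(e_j)_{j\in J}\in R$ (by the Chinese Remainder Theorem, the $\fm_i$ being pairwise comaximal) with $e_j\equiv 1\pmod{\fm_j}$ and $e_j\in\fm_i$ for $i\neq j$, and with $e_0\equiv 1\pmod{\fm_i}$ for $i\notin J$ and $e_0\in\fm_j$ for $j\in J$, and set
\[
  a := e_0^2 b + \sum_{j\in J} e_j^2 b_j.
\]
Since $\s$ fixes $R$ pointwise and $R$ is central, $r^2x=\s(r)xr$ for $r\in R$, so $\CQ_\alpha(A,\s)$ is closed under this operation and under addition; hence $a\in\CQ_\alpha(A,\s)\subseteq\Sym(A,\s)$. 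Reducing modulo each $\fm_i$ shows that $a+A\fm_i$ equals $b+A\fm_i$ (for $i\notin J$) or $b_i+A\fm_i$ (for $i\in J$), hence is a unit of $A/A\fm_i$, so $a\in A^\x$ by Proposition~\ref{first-ideals}(3); thus $a\in\Sym(A^\x,\s)$.

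It remains to show $\sign^\CM_\alpha\qf{a}_\s=m_{\bar\alpha}(A(\alpha),\s(\alpha))$. Since $a\ox 1\in\Sym(A(\alpha)^\x,\s(\alpha))$, the inequality ``$\le$'' holds by definition of $m_{\bar\alpha}$. For ``$\ge$'', note that $a\ox 1\in\CQ'_\alpha$: indeed $a\in\CQ_\alpha(A,\s)$ is a value of a form $\qf{c_1,\dots,c_k}_\s$ with $c_i\in M_\alpha(A,\s)$, whence $a\ox 1$ is a value of $\qf{c_1\ox 1,\dots,c_k\ox 1}_{\s(\alpha)}$. Now I would argue as in the proof of Lemma~\ref{sl-case-part1}: since $\bar\alpha\notin\Nil[A(\alpha),\s(\alpha)]$ by Lemma~\ref{Nil-product}, the set $\CQ'_\alpha$ is a prepositive cone on $(A(\alpha),\s(\alpha))$ over $\bar\alpha$ (Lemma~\ref{almost-pc-sl} together with \cite[Definition~3.1, Proposition~3.5]{A-U-pos}), and by the theory of prepositive cones \cite[Lemma~3.6, Proposition~6.7]{A-U-pos} all units of $A(\alpha)$ lying in $\CQ'_\alpha$ have the same $\CM_{\bar\alpha}$-signature at $\bar\alpha$, namely $m_{\bar\alpha}(A(\alpha),\s(\alpha))$ (for the Morita equivalence making this value positive), this value being attained by the unit $b'$ produced in Lemma~\ref{sl-case-part1}. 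Applying this to the unit $a\ox 1$ gives $\sign^\CM_\alpha\qf{a}_\s=\sign^\mab_{\bar\alpha}\qf{a\ox 1}_{\s(\alpha)}=m_{\bar\alpha}(A(\alpha),\s(\alpha))$.

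I expect the last paragraph to be the main obstacle: one needs to extract cleanly from \cite{A-U-pos} that, for a non-nil ordering, $\CQ'_\alpha$ is genuinely a prepositive cone (in particular not all of $\Sym(A(\alpha),\s(\alpha))$) and that inside such a cone the invertible elements share a common one-dimensional signature, equal to the maximal value $m_{\bar\alpha}(A(\alpha),\s(\alpha))$. By contrast, the Chinese Remainder patching and the check that the patched element remains in $\CQ_\alpha(A,\s)$ are routine; the small but essential point making Lemma~\ref{not-included} applicable is that every maximal ideal of $R$ at which $b$ is non-invertible is different from $\Supp(\alpha)$.
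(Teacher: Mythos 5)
Your construction of the unit is sound and is essentially the paper's argument: the paper also patches by the Chinese Remainder Theorem, using Lemma~\ref{not-included} at the maximal ideals $\fm\not\subseteq\Supp(\alpha)$ and Lemma~\ref{sl-case-part1} at $\Supp(\alpha)$ when it happens to be maximal; your variant, which keeps the element $b$ of Lemma~\ref{sl-case-part1} as the principal term and corrects it only at the maximal ideals where it fails to be invertible, works just as well, and your observation that any such ideal is necessarily $\not\subseteq\Supp(\alpha)$ (so that Lemma~\ref{not-included} applies) is exactly the right point. The verification that $a\in\CQ_\alpha(A,\s)\cap\Sym(A^\x,\s)$ is correct.

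The gap is in the last paragraph, and it is precisely the one you flag, so it must be counted as unresolved: your argument needs $\CQ'_\alpha$ to be a \emph{prepositive cone}, i.e.\ $\CQ'_\alpha\neq\Sym(A(\alpha),\s(\alpha))$, and nothing available before Proposition~\ref{sl-max-sign} gives this --- that is exactly why the proof of Lemma~\ref{sl-case-part1} has a separate Case~2. A priori (before Corollary~\ref{same-max}) it is not excluded that $m_\alpha(A,\s)$ is strictly smaller than $m_{\bar\alpha}(A(\alpha),\s(\alpha))$, even that $m_\alpha(A,\s)=0$ for a non-nil $\alpha$, since $m_\alpha$ is a maximum over the symmetric \emph{units of $A$} only; in that case $1,-1\in M_\alpha(A,\s)$, the form $\qf{1\ox 1,-1\ox 1}_{\s(\alpha)}$ is universal, $\CQ'_\alpha=\Sym(A(\alpha),\s(\alpha))$, and your key claim that every unit of $\CQ'_\alpha$ has signature $m_{\bar\alpha}(A(\alpha),\s(\alpha))$ fails (both $c$ and $-c$ lie in $\CQ'_\alpha$). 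Ruling out this degenerate case amounts to knowing that elements of $M_\alpha(A,\s)$ become, up to a global sign, positive semidefinite over the real closure, which is essentially the content of the proposition together with Corollary~\ref{same-max}; so the reduction is circular at the flagged step. In addition, even in the proper case you read off from \cite[Lemma~3.6, Proposition~6.7]{A-U-pos} a stronger statement than the one the paper extracts from them in Lemma~\ref{sl-case-part1} (namely that \emph{every} invertible element of a prepositive cone has maximal signature, with a sign constant on the cone, rather than the existence of one such element); this is plausible from the structure theory of positive cones but would need to be substantiated. The paper's own proof takes a different route at this point and never invokes properness of $\CQ'_\alpha$: since $a$ is invertible and represented by $\qf{a_1,\ldots,a_k}_\s$ with $a_i\in M_\alpha(A,\s)$, one splits off $\qf{a}_\s$, extends scalars to $\kappa(\alpha)$, diagonalizes an integer multiple of the complement using \cite[Lemma~2.2]{A-U-pos}, and concludes by comparing signatures in the resulting isometry $\ell\x\qf{a\ox 1}_{\s(\alpha)}\perp\qf{c_1,\ldots,c_t}_{\s(\alpha)}\simeq\ell\x\qf{a_1\ox 1,\ldots,a_k\ox 1}_{\s(\alpha)}$; replacing your final paragraph by that signature count would bring your proof in line with the paper's.
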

\begin{proof}
  Let $\fm_1, \ldots, \fm_\ell$ be the maximal ideals of $R$.
  Observe that for each $i \in \{1, \ldots, \ell\}$ there is $b_i \in
  \CCS_\alpha(A,\s)$ such that $b_i + A\fm_i \in (A/A\fm_i)^\x$ by
  Lemma~\ref{not-included}.
  By the Chinese remainder theorem, the canonical map
    $\xi \colon  R \rightarrow R/\fm_1 \x \cdots \x R/\fm_{\ell}$
  is surjective. In particular there are $r_1, \ldots, r_{\ell} \in R$ such
  that
  $\xi(r_i) = (0, \ldots, 0,1,0, \ldots, 0)$,
  where the coordinate $1$ is the one corresponding to the quotient $R/{\fm_i}$.
  Define
  \[b:= \s(r_1)b_1r_1 + \cdots + \s(r_{\ell})b_{\ell} r_{\ell}.\]
  Observe that $b \in \CCS_\alpha(A,\s)$.
  We check that $b$ is invertible. By Proposition~\ref{first-ideals}, it
  suffices to show that $b+A \fm$ is invertible in $A/A\fm$ for every maximal
  ideal $\fm$ of $R$. Consider such an ideal $\fm_i$. By definition of $r_i$ we
  have $r_i + \fm_i = 1+ \fm_i$ and $r_j + \fm_i = 0 + \fm_i$ for all $j \not
  = i$. Therefore $b + A\fm_i = b_i + A\fm_i$, which is invertible in
  $A/A\fm_i$.

  \medskip
  
  We show that if $a\in\CCS_\alpha(A,\s)$ is invertible, then
  $\sign^\CM_\alpha \qf{a}_\s = m_{\bar \alpha}(A(\alpha),
  \s(\alpha))$. Since $a \in \CCS_\alpha(A,\s)$, there are $a_1, \ldots, a_k \in
  \Sym(A,\s)$ such that $a \in D_{(A,\s)} \qf{a_1,\ldots,a_k}_\s$ and
  $
  a_1 \ox 1, \ldots, a_k \ox 1 \in M^{\CM_{\bar\alpha}}_{\bar \alpha}(A(\alpha), \s(\alpha))$.
  Since $a$ is
  invertible, a standard argument gives 
  $\qf{a_1, \ldots, a_k}_\s\simeq \qf{a}_\s \perp h $ 
  for some hermitian form $h$ over $(A,\s)$.
  Extending the scalars to $\kappa(\alpha)$, we obtain $\qf{a \ox
  1}_{\s(\alpha)} \perp h \ox \kappa(\alpha) \simeq \qf{a_1 \ox 1, \ldots, a_k
  \ox 1}_{\s(\alpha)}$ over the central simple algebra with involution
  $(A(\alpha), \s(\alpha))$. Observe that $a_1 \ox 1, \ldots, a_k \ox 1$ are
  invertible in $A(\alpha)$, and thus that $h \ox \kappa(\alpha)$ is
  nonsingular. By \cite[Lemma~2.2]{A-U-pos} there is $\ell \in \N$
  such that $\ell \x (h\ox \kappa(\alpha)) \simeq \qf{c_1, \ldots, c_t}_{\s(\alpha)}$ for some $c_1,
  \ldots, c_t \in \Sym(A(\alpha)^\x,\s(\alpha))$ (they are invertible since $h
  \ox \kappa(\alpha)$ is nonsingular). Therefore,
  \[\ell \x \qf{a \ox 1}_{\s(\alpha)} \perp \qf{c_1, \ldots, c_t}_{\s(\alpha)}
  \simeq \ell \x \qf{a_1 \ox 1, \ldots, a_k \ox 1}_{\s(\alpha)}.\]
  Note that both forms are diagonal over $(A(\alpha), \s(\alpha))$, so
  that $\ell + t =  \ell k$ for dimension reasons. Since $a_1 \ox 1, \ldots, a_k
  \ox 1 \in M^{\CM_{\bar\alpha}}_{\bar \alpha}(A(\alpha), \s(\alpha))$, the form on the right-hand
  side has the maximal signature that can be obtained by a 
  nonsingular diagonal form 
  of dimension $\ell k$
  over
  $(A(\alpha), \s(\alpha))$, 
  namely $\ell k \cdot m_{\bar\alpha}(A(\alpha), \s(\alpha))$. It is therefore the same for the form on the
  left-hand side, which implies that $a\ox 1 $  (and every $c_i$) 
  belongs to
  $M^{\CM_{\bar\alpha}}_{\bar \alpha}(A(\alpha),
  \s(\alpha))$, i.e., $\sign^\CM_\alpha \qf{a}_\s =
  m_{\bar \alpha}(A(\alpha), \s(\alpha))$.
\end{proof}

Since $m_\alpha(A,\s) \leq m_{\bar \alpha}(A(\alpha), \s(\alpha))$ by the
definition of signatures, 
the following corollary is an immediate consequence of
Proposition~\ref{sl-max-sign}:

\begin{cor}\label{same-max}
  Assume that $R$ is semilocal and that $\alpha\in \Sperm R$. Then
  \begin{enumerate}
    \item $m_\alpha(A,\s) = m_{\bar \alpha}(A(\alpha), \s(\alpha))$.
    \item If $a \in M^\CM_\alpha(A,\s)$, then $a \ox 1 \in 
    M^{\CM_{\bar \alpha}}_{\bar \alpha}(A(\alpha), \s(\alpha))$.
  \end{enumerate}
  Note that if 
    $\alpha\not\in \Nil[A,\s]$, then
    $m_{\bar \alpha}(A(\alpha), \s(\alpha))=n_{\bar \alpha}$.
\end{cor}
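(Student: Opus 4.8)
The plan is to read off both assertions from Proposition~\ref{sl-max-sign} together with the trivial half of the inequality. For the trivial half, note that if $b\in\Sym(A^\x,\s)$ then $b\ox 1\in\Sym(A(\alpha)^\x,\s(\alpha))$, since scalar extension along $R\to\kappa(\alpha)$ preserves symmetry and invertibility, and by Definition~\ref{def:sig_h} we have $\sign^\CM_\alpha\qf{b}_\s=\sign^{\CM_{\bar\alpha}}_{\bar\alpha}\qf{b\ox 1}_{\s(\alpha)}$. Hence $\{\sign^\CM_\alpha\qf{b}_\s\mid b\in\Sym(A^\x,\s)\}$ is contained in $\{\sign^{\CM_{\bar\alpha}}_{\bar\alpha}\qf{c}_{\s(\alpha)}\mid c\in\Sym(A(\alpha)^\x,\s(\alpha))\}$, so $m_\alpha(A,\s)\le m_{\bar\alpha}(A(\alpha),\s(\alpha))$. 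The reverse inequality is exactly Proposition~\ref{sl-max-sign}, which produces an $a\in\Sym(A^\x,\s)$ with $\sign^\CM_\alpha\qf{a}_\s=m_{\bar\alpha}(A(\alpha),\s(\alpha))$; combining the two gives (1).

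For (2), let $a\in M_\alpha(A,\s)$. Then $a\ox 1\in\Sym(A(\alpha)^\x,\s(\alpha))$ as above, and using Definition~\ref{def:sig_h} followed by part (1),
\[
\sign^{\CM_{\bar\alpha}}_{\bar\alpha}\qf{a\ox 1}_{\s(\alpha)}=\sign^\CM_\alpha\qf{a}_\s=m_\alpha(A,\s)=m_{\bar\alpha}(A(\alpha),\s(\alpha)).
\]
Hence $a\ox 1$ realizes the maximum and therefore lies in $M_{\bar\alpha}(A(\alpha),\s(\alpha))$ by definition.

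For the concluding remark, suppose $\alpha\notin\Nil[A,\s]$. By Lemma~\ref{Nil-product} this is equivalent to $\bar\alpha\notin\Nil[A(\alpha),\s(\alpha)]$, i.e., some one-dimensional hermitian form over the central simple algebra with involution $(A(\alpha),\s(\alpha))$ has nonzero signature at $\bar\alpha$. The results of \cite[Proposition~6.7 and Theorem~6.8]{A-U-pos} then identify the maximal (hence nonzero) value of the signature of a one-dimensional form over a central simple algebra with involution with the degree of the algebra, so $m_{\bar\alpha}(A(\alpha),\s(\alpha))=\deg A(\alpha)$. I do not anticipate a real obstacle here: the substance is entirely contained in Proposition~\ref{sl-max-sign}, and what remains is bookkeeping about scalar extension of symmetric units together with the quoted facts; the only point needing a little care is that, by construction in Definition~\ref{def:sig_h}, the Morita equivalence used for $\sign^\CM_\alpha$ is exactly the one inducing $\sign^{\CM_{\bar\alpha}}_{\bar\alpha}$, so no sign ambiguity enters.
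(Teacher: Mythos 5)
Your proposal is correct and follows essentially the same route as the paper: the trivial inequality $m_\alpha(A,\s)\le m_{\bar\alpha}(A(\alpha),\s(\alpha))$ from the definition of the signature, the reverse inequality from Proposition~\ref{sl-max-sign}, and the final remark via Lemma~\ref{Nil-product} together with \cite[Proposition~6.7]{A-U-pos}. Your spelled-out verification that $b\ox 1$ stays symmetric and invertible, and your handling of the choice of Morita equivalence, are exactly the bookkeeping the paper leaves implicit.
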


\begin{proof}
  The final statement is the only one that still requires a proof and 
  follows from \cite[Proposition~6.7]{A-U-pos}
  since $\bar\alpha\not\in \Nil[A(\alpha), \s(\alpha)]$ by 
  Lemma~\ref{Nil-product}.  
\end{proof}

As already mentioned in Remark~\ref{sign-bounded}, we have
$\sign^\CM_\alpha \qf{a}_\s \le  n_{\bar \alpha}$ for $a\in \Sym(A,\s)$
by 
\cite[Proposition~4.4]{A-U-PS}. It
immediately follows from Corollary~\ref{same-max} that the definition of
$m_\alpha(A,\s)$ could include non-invertible elements when $R$ is semilocal:

\begin{cor}
  Assume that $R$ is semilocal and that $\alpha \in \Sperm R$. Then
  \[m_\alpha(A,\s) = \max \{\sign^\CM_\alpha \qf{a}_\s \mid a \in \Sym(A,\s)\}.\]
\end{cor}

\begin{rem}
  While an element of
  maximal signature
  in a central simple algebra with involution over a field  is necessarily 
  invertible (cf. Lemma~\ref{with-cofinal}), this may not be so for  Azumaya
  algebras with involution, even already in the ring case. 
  For example,
  let $(A,\s) =
  (\Z_{3\Z}, \id)$ (in particular, hermitian forms over $(A,\s)$ are just
  bilinear forms over $\Z_{3\Z}$ and their signatures are the usual Sylvester
  signatures). The ring $\Z_{3\Z}$ has a unique ordering $\alpha_0$, and
  $\sign_{\alpha_0} \qf{3} = 1 = m_{\alpha_0}(A,\s)$.
\end{rem}

\section{The involution trace pairing}\label{sec:inv-tr-p}
Let $R$ be a commutative ring (with $2\in R^\x$), let
$(A,\s)$ be an Azumaya algebra with involution over $R$, let $S=Z(A)$
and let $\iota:=\s|_S$. 
Note that $A$ is Azumaya over $S$, but not necessarily Azumaya
over $R$.

\subsection{The involution trace form}

We consider the reduced trace of $A$, $\Trd_A\colon A\to S$, cf.
\cite[IV, \S 2]{KO}, and recall that it is additive and $S$-linear.
Furthermore, $\Trd_A$ commutes with scalar extensions of $S$  since 
its computation does not depend on the choice of splitting ring, cf. 
\cite[IV, Proposition~2.1]{KO}. In fact, $\Trd_A$ also commutes with scalar 
extensions of $R$ (the case of interest to us) as the following computation shows:

\begin{lemma}
  Let $R'$ be a commutative ring that contains $R$. Then for all $a\in A$,
  \[
    \Trd_A(a) \ox_R 1_{R'} = \Trd_{A \ox_R R'}(a \ox_R 1_{R'}).
  \]
\end{lemma}

\begin{proof}
Observe that $A \ox_R R' \cong A \ox_S (S \ox_R R')$, via 
$a \ox_R r' \mapsto a \ox_S 1
\ox_R r'$. It follows that
\begin{align*}
  \Trd_A(a) \ox_R 1_{R'} &= \Trd_A(a) \ox_S 1_{S} \ox_R  1_{R'} \\
    &= \Trd_{A \ox_S S \ox_R R'}(a \ox_S 1_S \ox_R 1_{R'})  \\
    &= \Trd_{A \ox_R R'}(a \ox_R 1_{R'}).\qedhere
  \end{align*}
\end{proof}

\begin{lemma}\label{lem-tr}
  For all $a\in A$ we have $\Trd_A(\s(a)) = \iota(\Trd_A(a))$.
\end{lemma}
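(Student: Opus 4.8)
The statement to prove is that $\Trd_A(\s(a)) = \iota(\Trd_A(a))$ for all $a \in A$, where $\Trd_A : A \to S$ is the reduced trace and $\iota = \s|_S$. My plan is to reduce to a situation where the reduced trace is computed by a familiar matrix trace, using that both sides are $S$-module maps (well, $\iota$-semilinear) and that the reduced trace commutes with scalar extension.

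First I would observe that both $a \mapsto \Trd_A(\s(a))$ and $a \mapsto \iota(\Trd_A(a))$ are additive maps $A \to S$ which are $\iota$-semilinear over $S$ (i.e. $f(as) = \iota(s) f(a)$ for $s \in S$): indeed $\Trd_A$ is $S$-linear and $\s$, $\iota$ are semilinear in the appropriate sense, so both composites transform the same way. Hence it suffices to check the identity on an $S$-module generating set of $A$, or — better — to check it after a faithfully flat scalar extension of $S$. Since $A$ is Azumaya over $S$, there is a faithfully flat commutative $S$-algebra $S'$ (e.g. an étale splitting ring, or even just localizations at maximal ideals of $S$ followed by a split extension) such that $A \ox_S S' \cong \End_{S'}(P)$ for some finitely generated projective $S'$-module $P$, in fact free since we may take $S'$ semilocal connected; and since the reduced trace is invariant under scalar extension (as recalled from \cite[Theorem~4.3]{Saltman99}), $\Trd_{A \ox_S S'} = \Trd_A \ox \id$. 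The involution $\s$ extends to an involution $\s' = \s \ox \id$ on $A \ox_S S'$, and it suffices to prove the identity for $(A \ox_S S', \s')$ over $S'$ (this uses faithful flatness: a map of $S$-modules is zero iff it is zero after $\otimes_S S'$).

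So I would reduce to the case $A = M_n(S)$ with $\Trd_A = \tr$ the ordinary matrix trace, and $\s$ an arbitrary $S$-linear involution; here one must be slightly careful that $\s|_S$ need not be the identity (unitary type), so $\iota$ can be nontrivial on $S$. By a Skolem–Noether / standard classification argument, any involution $\s$ on $M_n(S)$ restricting to $\iota$ on $S$ has the form $\s(X) = u^{-1} \iota(X)^t u$ (or $u^{-1} \bar X^t u$ in the unitary case, applying $\iota$ entrywise) for a suitable unit $u$ with $\iota(u)^t = \pm u$, where $\iota(X)$ means applying $\iota$ entrywise. Then $\tr(\s(X)) = \tr(u^{-1} \iota(X)^t u) = \tr(\iota(X)^t) = \tr(\iota(X)) = \iota(\tr(X))$, using cyclicity of trace, $\tr(Y^t) = \tr(Y)$, and that $\iota$ applied entrywise commutes with $\tr$. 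In the orthogonal/symplectic case $\iota = \id$ and this is just $\tr(\s(X)) = \tr(X)$.

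The main obstacle is handling the unitary (second-kind) case cleanly: one needs the correct normal form for $\s$ on $\End_{S'}(P)$ relative to a nontrivial $\iota$, which amounts to saying $\s$ is the adjoint involution of some $\iota$-sesquilinear form on $P$ — this follows from hermitian Morita theory / the theory of involutions on Azumaya algebras as in \cite{first22}, but requires invoking it at the right level of generality over the (not necessarily local) ring $S'$. An alternative that sidesteps the normal form entirely: work over $k(\alpha)$ or over residue fields, i.e. check the identity after reduction modulo each maximal ideal of $S$ (using Proposition~\ref{first-ideals} and invariance of $\Trd$ under base change), reducing to the case of a central simple algebra with involution over a field, where $\Trd_A(\s(a)) = \iota(\Trd_A(a))$ is classical (see \cite{BOI}); since an element of $S$ is zero iff its image in every residue field $\Qf(S/\fm)$ is zero (as $S$ is reduced — or simply argue with $S$ semilocal so the product of residue maps after modding out Jacobson radical is injective on the relevant module, combined with a separate Nakayama-type argument), this suffices. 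I expect the write-up to favor whichever of these two reductions is shortest given what the paper has already set up.
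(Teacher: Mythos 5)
Your plan has a genuine gap, and it sits exactly where the content of the lemma lies, namely the unitary case. First, the main reduction is ill-posed there: when $\iota\neq\id$ the involution $\s$ is only $\iota$-semilinear over $S$ (it is $R$-linear, not $S$-linear), so ``$\s'=\s\ox\id$ on $A\ox_S S'$'' is not a well-defined map -- for $s\in S$ one would need $\s(a)\iota(s)\ox s'=\s(a)\ox ss'$ in $A\ox_S S'$. Any faithfully flat splitting argument must therefore either base change over $R$, or choose $S'$ together with an involution extending $\iota$; and even after such a repair, the normal form $\s(X)=u^{-1}\iota(X)^t u$ on $\End_{S'}(P)$ is a semilinear Skolem--Noether statement over $S'$ that is false over general commutative rings (there is a Picard-group obstruction) and is precisely the nontrivial input you leave unproved -- you flag this yourself as ``the main obstacle''. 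Second, the proposed fallback via residue fields does not close the gap: $R$, hence $S$, is an arbitrary semilocal ring and need not be reduced, so checking the identity in $A\ox_R R/\fm$ for every maximal ideal $\fm$ only shows that $\Trd_A(\s(a))-\iota(\Trd_A(a))$ lies in $J(S)$, not that it vanishes; residue fields are not faithfully flat, and no Nakayama-type argument applies, since Nakayama concerns finitely generated modules with $M=J\!M$, not an element-wise congruence. (There is also a minor point that in the unitary case one must reduce modulo maximal ideals of $R$, not of $S$, since $\iota$ may permute the latter.)

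For comparison, the paper avoids both issues by a much shorter route: if $\iota=\id_S$ the identity is \cite[III, (8.1.1)]{knus91}; if $\s$ is unitary, one extends scalars along $R\to S$ (legitimate because $\s$ \emph{is} $R$-linear), where the explicit isomorphism of \cite[(2.15)]{BOI}, adapted using $S=R\oplus\lambda R$, gives $(A\ox_R S,\s\ox\id)\cong(A\x A^{\op},\ \text{exchange})$; on $A\x A^{\op}$ the reduced trace is computed componentwise and the exchange involution swaps the factors, so the identity is immediate, and it descends since $S$ is faithfully flat (indeed quadratic \'etale) over $R$, so that $S\to S\ox_R S$ is injective. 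If you want to salvage your approach, the minimal fix is to make all base changes over $R$ and to prove (or cite at the right generality, e.g. \cite[Theorem~8.6]{BFP} over semilocal rings) the semilinear inner-form statement you invoke -- but the exchange-algebra argument renders this machinery unnecessary.
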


\begin{proof}
If  $\iota=\Id_S$, i.e., $S=R$, the statement follows from \cite[III,
  (8.1.1)]{knus91}. We assume that $\iota \not = \Id_S$, and first observe that
  \begin{align*}
    \Trd_A&(\s(a)) = \iota(\Trd_A(a)) \Leftrightarrow \forall \fp \in \Spec R \ 
        \Trd_A(\s(a)) \ox_R 1_{R_\fp} = \iota(\Trd_A(a)) \ox_R 1_{R_\fp} \\
      &\Leftrightarrow \forall \fp \in \Spec R \ 
        \Trd_{A \ox_R R_\fp}(\s(a) \ox 1_{R_\fp}) = (\iota \ox \id)(\Trd_A(a)
        \ox 1_{R_\fp}) \\
      &\Leftrightarrow \forall \fp \in \Spec R \ 
        \Trd_{A \ox_R R_\fp}((\s \ox \id)(a \ox 1_{R_\fp})) = 
       (\iota \ox
        \id)(\Trd_{A \ox_R R_\fp}(a \ox 1_{R_\fp})).
  \end{align*}
  Therefore, it suffices to prove the result for $(A \ox_R R_\fp, \s \ox
  \id)$, and in particular it suffices to prove the statement of the lemma under
  the extra hypothesis that $R$ is local.
  In this case, the arguments in \cite[(2.15) and
  (2.16)]{BOI} hold \emph{mutatis mutandis} for the Azumaya algebra with
  involution $(A,\s)$ over $R$. More precisely, since $R$ is local,
  there is $\lambda \in S$ such that $\lambda^2 \in R^\x$,
  $\iota(\lambda) = -\lambda$ and $S = R \oplus \lambda R$, cf. 
  Section~\ref{sec:quadalg}. Then
  \cite[(2.15)]{BOI} holds for $(A,\s)$, i.e., the map
  \[
    (A\ox_R S, \s\ox\id) \to (A\x A^\op, \ve),\ a\ox s \mapsto
    (as, (\s(a)s)^\op),
  \] 
  where $\ve$ denotes the exchange involution,
  is an isomorphism of $S$-algebras with involution
  (replacing the element $\alpha$ in the proof of \cite[(2.15)]{BOI}
  by $\lambda$, and observing that $\lambda - \iota(\lambda) = 2\lambda
  \in S^\x$). The claimed equality becomes
  straightforward to verify after application of this isomorphism since the
  reduced trace is invariant under scalar extension.
\end{proof}

The \emph{involution trace form}  of $(A,\s)$
is the form
\[
  T_\s\colon  A\x A\to S,\ (x,y)\mapsto \Trd_A(\s(x)y).
\]
By Lemma~\ref{lem-tr}, $T_\s$ is symmetric bilinear 
over $R$ if $S=R$ and hermitian over 
$(S,\iota)$ otherwise.

\begin{lemma}
  The form $T_\s$ is nonsingular, i.e., $(A, T_\s)\in \bH(S,\iota)$.
\end{lemma}

\begin{proof}
  Since $S$ is a finitely generated $R$-module (cf. Definition~\ref{first-def}
  and Proposition~\ref{prop:Az}), the form $T_\s$ is nonsingular
  if and only if the form $T_\s \ox_R R/\fm$ is nonsingular for all maximal
  ideals $\fm$ of $R$, cf. \cite[I, Lemma~7.1.3]{knus91}. Since $\Trd_A$
  commutes with scalar extension, $T_\s \ox_R R/\fm$ is isometric to the 
  involution trace form of the central simple algebra with involution
  $(A \ox_R R/\fm, \s \ox \id)$, which is nonsingular, cf. \cite[\S 11]{BOI}.
\end{proof}

\begin{lemma}\label{lem:isom}
  The sandwich map $\sw$ (cf. \eqref{swm})
  induces an isomorphism
    \begin{equation*}
      (A\ox_S  A^\op, \s\ox \s^\op) \cong (\End_S(A), \ad_{T_\s})
    \end{equation*}
  of Azumaya algebras with involution over $R$.
\end{lemma}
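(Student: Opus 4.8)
The plan is to observe that the sandwich map $\sw$ is \emph{already} an isomorphism of $S$-algebras and then to verify by a short reduced-trace computation that it intertwines the two involutions $\s\ox\s^\op$ and $\ad_{T_\s}$.

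First I would record the setup. Since $A$ is Azumaya over $S=Z(A)$ by Proposition~\ref{prop:Az}, the sandwich map $\sw\colon A\ox_S A^\op\to\End_S(A)$, $a\ox b^\op\mapsto[x\mapsto axb]$, is an isomorphism of $S$-algebras --- this is exactly the defining property \eqref{swm} of an Azumaya algebra, read with $S$ in the role of $R$ --- and hence it is $R$-linear. Here $A^\op$ carries the involution $\s^\op$ given by $\s^\op(a^\op)=\s(a)^\op$, and one checks routinely that $\s$ and $\s^\op$ are $\iota$-semilinear over $S$, so that $\s\ox\s^\op$ is a well-defined involution on $A\ox_S A^\op$. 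On the other side, $T_\s$ is a nonsingular hermitian form over $(S,\iota)$, so $\ad_{T_\s}$ is the well-defined adjoint involution on $\End_S(A)$ via \eqref{ad}, and $(\End_S(A),\ad_{T_\s})$ has already been observed to be an Azumaya algebra with involution over $R$.

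Next I would reduce the lemma to the identity $\sw\circ(\s\ox\s^\op)=\ad_{T_\s}\circ\sw$. Both composites are additive and every element of $A\ox_S A^\op$ is a sum of elementary tensors, so it suffices to check this on a single $a\ox b^\op$. Writing $f:=\sw(a\ox b^\op)$, so that $f(x)=axb$, and using $(\s\ox\s^\op)(a\ox b^\op)=\s(a)\ox\s(b)^\op$, one gets $\sw\bigl((\s\ox\s^\op)(a\ox b^\op)\bigr)=g$ with $g(y)=\s(a)y\s(b)$. By nonsingularity of $T_\s$ and the defining relation \eqref{ad}, it is enough to prove $T_\s(x,g(y))=T_\s(f(x),y)$ for all $x,y\in A$. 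Using that $\s$ is an anti-automorphism and the cyclicity $\Trd_A(uv)=\Trd_A(vu)$ of the reduced trace (a standard identity, checked after a faithfully flat scalar extension splitting $A$; recall that reduced characteristic polynomials are stable under scalar extension, cf.\ \cite[Theorem~4.3]{Saltman99}), I compute
\[
T_\s(f(x),y)=\Trd_A\bigl(\s(axb)\,y\bigr)=\Trd_A\bigl(\s(b)\s(x)\s(a)\,y\bigr)
\]
and
\[
T_\s(x,g(y))=\Trd_A\bigl(\s(x)\,\s(a)y\s(b)\bigr)=\Trd_A\bigl(\s(b)\s(x)\s(a)\,y\bigr),
\]
which coincide; hence $g=\ad_{T_\s}(f)$. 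This yields the desired isomorphism of $R$-algebras with involution, and the left-hand side is then an Azumaya algebra with involution over $R$ because it is isomorphic to the right-hand side.

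The argument is essentially bookkeeping, so I do not expect a substantive obstacle; the only points needing care are the alignment of conventions --- the induced involution $\s^\op$ on $A^\op$, the side convention \eqref{ad} for the adjoint involution, and the order reversal produced by $\s$ --- together with the invocation of the cyclicity of $\Trd_A$, which is precisely the property that makes $T_\s$ symmetric (respectively hermitian) in the first place.
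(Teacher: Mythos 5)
Your proposal is correct and follows essentially the same route as the paper: since $\sw$ is already an isomorphism of $S$-algebras (the Azumaya property of $A$ over $S$), everything reduces to checking $\ad_{T_\s}(\sw(a\ox b^\op))=\sw(\s(a)\ox\s^\op(b^\op))$ on elementary tensors via the defining relation \eqref{ad} and the cyclicity of $\Trd_A$, which is exactly the computation in the paper's proof.
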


\begin{proof}
  By \eqref{swm}, the sandwich map is an isomorphism of $S$-algebras.
  We first show that it respects the involutions, i.e., 
  $\ad_{T_\s}(\sw(a\ox b^\op))= \sw(\s(a)\ox
  \s^\op(b^\op))$ for
  all $a,b\in A$. With reference to \eqref{ad} this follows from the
  straightforward computation
  \begin{align*}
    T_\s(x, \sw(\s(a)\ox \s^\op(b^\op))(y)) &= \Trd_A(\s(x)\s(a)y\s(b))
    =\Trd_A(\s(b)\s(x)\s(a)y)\\
    &=\Trd_A(\s(axb)y)
    =T_\s (axb,y)\\
    &=T_\s(\sw(a\ox b^\op)(x),y)
  \end{align*}
  which holds for all $x,y,a,b\in A$. 
    
  Finally, $(\End_S(A), \ad_{T_\s})$ is an Azumaya algebra with 
  involution over $R$. 
  Indeed, by Proposition~\ref{prop:Az}, $S$ is finite \'etale over $R$
  and $A$ is Azumaya over $S$.
  Hence, $\End_S(A)$ is Azumaya over $S$ and in particular has centre $S$. 
  Therefore, by Proposition~\ref{prop:Az} $\End_S(A)$ is projective
  and separable over $R$. Clearly, $\ad_{T_\s}$ is $R$-linear. Thus we
  just have to show that if $s\in S\setminus R$, then 
  $\ad_{T_\s}(s\cdot \id_A)\not=s\cdot \id_A$. This can be checked 
  by showing that if $\ad_{T_\s}(s\cdot \id_A)=s\cdot \id_A$ for some $s\in S$,
  then $\s(s)=s$,  
  using the
  nonsingularity of $T_\s$ and the  
  definition of $\ad_{T_\s}$ in a similar fashion to the computation 
  above. 
\end{proof}

\subsection{The Goldman element}\label{sec:gm}

We can view $\Trd_A$ as an element of $\End_S(A)$.
By the definition of the sandwich isomorphism \eqref{swm}, there is a unique element
$g_A=\sum_i x_i\ox y^\op_i$ in $A\ox_S A^\op$ such that
\begin{equation}\label{G-eq2}
  \sw(g_A)(a)=\sum_i x_i a y_i = \Trd_A(a)\quad\text{for all } a\in A.
\end{equation}
The element $g_A$ is called the \emph{Goldman 
element} of $A$. 

\begin{lemma}\label{golds}
  The Goldman element $g_A$ satisfies
  \begin{equation}\label{G-eq7}
    (\s\ox \s^\op)(g_A)=g_A.
  \end{equation}  
\end{lemma}

\begin{proof}
  By Lemma~\ref{lem:isom} it  suffices to show that
  $\ad_{T_\s}(\Trd_A)=\Trd_A$ in $\End_S(A)$
   in order to prove the claim. 
  Consider \eqref{ad} with $h=T_\s$ and
  $f=\Trd_A$. Using the properties of $T_\s$ and $\Trd_A$ we have
  \begin{align*}
    T_\s(x, \ad_{T_\s}(\Trd_A)(y))&=T_\s(\Trd_A(x),y)
    =\iota(\Trd_A(x)) T_\s(1,y)\\
    &=\Trd_A(\s(x)) T_\s(1,y)    
    = T_\s(x,1)\Trd_A(y) \\
    &= T_\s(x, \Trd_A(y)) 
    \end{align*}
  for all $x,y\in A$. Since $T_\s$ is nonsingular, the claim follows.
\end{proof}

We usually think of $g_A$ as an element of $A\ox_S A$ via the 
canonical $S$-module isomorphism $A\ox_S A \to A\ox_S A^\op,
a\ox b \mapsto a\ox b^\op$ and write $g_A=\sum_i x_i\ox y_i$, cf. 
\cite[p.~112]{KO}.
Since  $\s\ox\s$ and $\s\ox\s^\op$ correspond to each other as additive maps
under this isomorphism, Lemma~\ref{golds} yields
\begin{equation}\label{soxs}
      (\s\ox \s)(g_A)=g_A.
\end{equation}
in $A\ox_S A$.
Furthermore, we have
\begin{equation}\label{G-eq3}
  g_A^2=1
\end{equation}
and
\begin{equation}\label{G-eq4}
  g_A(a\ox b)=(b\ox a)g_A\quad \text{for all } a,b\in A,
\end{equation}
cf. \cite[IV, Proposition~4.1]{KO}.

\subsection{Module actions}\label{module-actions}
We define $\io A$ to be the  $S$-algebra given by the ring $A$ equipped with
the following left action by $S$:
\[S \x A \rightarrow A, \quad (s,a) \mapsto s \iop a := a\iota(s).\]
We denote this action of $S$ on $A$ with the symbol $\iop{}$
in order to distinguish it from the product in $A$ of elements of $S$ and $A$.
Note that $(\io A,\s)$ is an $(S,\iota)$-algebra with 
involution 
as 
presented in Section~\ref{hfri}, and that
if  $(S, \iota) = (R, \id)$, then $\io A
= A$.

We can view $A$ as a left $A\ox_S \io A$-module via the \emph{twisted} 
sandwich action:
\begin{equation}\label{twsa}
  a\ox  b \tsa x := ax\s(b)
\end{equation}
for all $a,b,x \in A$ (it is necessary to use $\io A$ in the tensor product
instead of $A$, in order for the action to be well-defined, which is the
motivation for introducing $\io A$).

\begin{lemma}\label{opio-isom}
 The twisted sandwich map 
 \[
 A\ox_S \io A \to \End_S(A), a\ox  b \mapsto [x\mapsto ax\s(b)]
 \] 
 induces an isomorphism
  \[
    (A\ox_S \io A, \s\ox \s)\cong (\End_S(A), \ad_{T_\s})
  \]
  of Azumaya algebras with involution over $R$.
\end{lemma}

\begin{proof}
  The map $\s\colon A^\op \to \io A$ is an isomorphism of $S$-algebras and yields
  an isomorphism $(A\ox_S A^\op, \s\ox\s^\op)\cong (A\ox_S \io A, \s\ox\s)$ of 
  Azumaya algebras with involution over $R$. The result then follows from 
  Lemma~\ref{lem:isom}.
\end{proof}

For a right $A$-module $M$ we denote by $\io M$ the right $\io A$-module
with the same elements as $M$, with multiplication $\io M\x \io A\to \io M$
the same as the multiplication $M\x A \to M$ and with left action by $S$ given
by
\[S \x \io M \rightarrow \io M, \quad (s,m) \mapsto s \iop m := m\iota(s).\]
If $(M,h)\in \Herm^{\ve}(A,\s)$, then $h$ is $\iota(\ve)$-hermitian on $\io
M$, and we  denote it by $\io h$.  

If $M_1$ and $M_2$ are right
$A$-modules a direct verification shows that $M_1 \ox_S \io M_2$ is a right $A
\ox_S \io A$-module with multiplication  induced by
\[ 
(M_1 \ox_S \io M_2)\x (A \ox_S \io A) \rightarrow M_1 \ox_S \io M_2, \quad
(m_1 \ox m_2) \cdot (a \ox b) := m_1a \ox m_2b.
\]

\subsection{The involution trace pairing}\label{sec:trp}

Let $(M_1,h_1)\in \Herm^{\ve_1}(A,\s)$ and $(M_2,h_2)\in \Herm^{\ve_2}(A,\s)$,
and consider the involution trace form $(A,T_\s)\in \bH(S,\iota)$.
Using the twisted sandwich action of $A\ox_S \io A$ on $A$ (cf. \eqref{twsa}),
we can define the $S$-module $(M_1\ox_S \io M_2) \ox_{A\ox_S \io A} A$, which
carries the form $T_\s \knp (h_1\ox \io h_2) \in 
\Herm^{\ve_1\iota(\ve_2)}(S,\iota)$, where $\knp$
denotes the product of forms from \cite[I, (8.1), (8.2)]{knus91}. 
In other words,
\[
  T_\s \knp (h_1\ox \io h_2) (m_1\ox  m_2\ox a, m_1'\ox  m_2' \ox a') :=
  T_\s\bigl(a,h_1(m_1,m_1') \ox \io h_2( m_2,  m_2') \tsa a'  \bigr)
\]
for all $m_1, m_1'\in M_1$, $m_2,m_2'\in \io M_2$ and $a,a'\in A$.
(Note that we
do not indicate all parentheses in long tensor products of elements, in order
not to overload the notation.)

For this
product to be well-defined,  $A$ needs to be  an 
$(A\ox_S \io
A)$-$S$ bimodule   and the form $(A,T_\s)$ needs to ``admit'' $A\ox_S
\io A$, which means that the equality
\[
  T_\s(\s(x)\ox \s(y)\tsa a, b) =T_\s(a, x\ox  y\tsa b)
\]
must hold for all $x,y,a,b\in A$, but this follows from Lemma~\ref{opio-isom}.

In this way we obtain the pairing
\begin{equation}\label{eq:ns}
  \begin{split}
  *\colon  &\Herm^{\ve_1}(A,\s)\x \Herm^{\ve_2}(A,\s) \to \Herm^{\ve_1 
  \iota(\ve_2)}(S,\iota),\\
(M_1,h_1) &* (M_2, h_2) := \bigl((M_1\ox_S  \io M_2) \ox_{A\ox_S  \io A} A,
T_\s \knp  (h_1\ox \io h_2)\bigr).
\end{split}
\end{equation}
Expanding the definition of the pairing and simply writing $h_1 *h_2$, we see 
that
\begin{align}
    h_1 * h_2 (m_1\ox  m_2\ox a,  m_1'\ox  m_2' \ox a') 
    &= T_\s\bigl(a,h_1(m_1,m_1') \ox  \io h_2( m_2, m_2') 
    \tsa a'  \bigr)\notag\\
    &=\Trd_A\bigl(\s(a)h_1(m_1, m_1')a'\s(h_2(m_2,m_2'))\bigr)\label{G-eq8}\\
    &=\Trd_A\bigl(h_1(m_1a, m_1'a') \s( h_2(m_2,m_2'))\bigr).\notag
\end{align}

\begin{lem}\label{ts-morita}
  The  Azumaya algebras with involution $(A\ox_S \io A, \s\ox\s)$ and  
  $(S,\iota)$ are Morita equivalent via
  \[
    \Herm^\ve (A\ox_S \io A, \s\ox\s)\to \Herm^\ve (S,\iota),\
    \vf \mapsto T_\s \knp \vf.
  \]
\end{lem}

\begin{proof}
  This follows from Lemma~\ref{opio-isom} and \cite[I, 
  Theorem~9.3.5]{knus91}.
\end{proof}

\begin{cor}\label{lem:preserves} The following properties hold:
  \begin{enumerate}
    \item The pairing $*$ preserves
    orthogonal sums in each component.
    \item If $h_1$ and $h_2$ are nonsingular, then $h_1* h_2$ is nonsingular.
    \item If $h_1\simeq h_1'$ and $h_2\simeq h_2'$, then $h_1 * h_2 \simeq
    h_1' * h_2'$.
  \end{enumerate}
\end{cor}

\begin{proof}
  Let $h_i\in \Herm^{\ve_i}(A,\s)$ for $i=1,2$, then
  $h_1 *h_2= T_\s \knp (h_1\ox \io h_2)$ and the three statements follow from
  Lemma~\ref{ts-morita},  \cite[I, Theorem~9.3.5]{knus91}  and standard
  properties of the tensor product of forms.
\end{proof}

\begin{thm}\label{pairing-ass-2}
  Let $(M_i,h_i) \in \Herm^{\ve_i}(A,\s)$ for $i=1,2,3$.  Then
  \[(h_1 * h_2) \ox_S h_3 \simeq (h_3 * h_2) \ox_S h_1.\]
\end{thm}

\begin{proof}
  We are grateful to the first referee for suggesting this proof, which
  is significantly shorter and more conceptual than our original one.
  
  Consider the $\ve_1\iota(\ve_2)\ve_3$-hermitian forms $(M,h):=(M_1\ox_S \io M_2\ox_S M_3, h_1\ox \io h_2 \ox h_3)$
  and $(M',h'):=(M_3\ox_S \io M_2\ox_S M_1, h_3\ox \io h_2 \ox h_1)$
  over $(A\ox_S \io A \ox_S A, \s^{\ox 3})$. Let 
  $g'_A \in A\ox_S \io A \ox_S A$ be the image of the Goldman element
  $g_A \in A\ox_S A$ under the natural map $a\ox b \mapsto a\ox 1\ox b$.
  Using the properties of $g'_A$, induced by those of $g_A$ 
  (cf. Section~\ref{sec:gm}), the following
  computation shows that $(M,h)$ and 
  $(M',h')$ are isometric via the  isomorphism of right
  $A\ox_S \io A \ox_S A$-modules
  $M\to M'$, defined by
  $m_1\ox m_2\ox m_3 \mapsto (m_3\ox m_2\ox m_1)g'_A$:  
  \begin{align*}
    h'( (m_3\ox m_2 \ox m_1)g'_A, & (m'_3\ox m'_2\ox m'_1)g'_A) \\
    &=\s^{\ox 3}(g'_A) h'(m_3\ox m_2 \ox m_1, m'_3\ox m'_2\ox m'_1)g'_A\\
    &=g'_A(h_3(m_3,m'_3) \ox \io h_2(m_2,m'_2) \ox h_1(m_1, m'_1) )g'_A\\
    &=h_1(m_1, m'_1) \ox \io h_2(m_2,m'_2) \ox h_3(m_3,m'_3) \\
    &= h(m_1\ox m_2 \ox m_3, m'_1\ox m'_2\ox m'_3).
  \end{align*}
  
  Since $(A,\s)\cong (\End_A(A), \ad_{\qf{1}_\s})$, it follows from
  Lemmas~\ref{opio-isom} and \ref{adj-prod} that
  \begin{align*}
      (A\ox_S \io A \ox_S A, \s^{\ox 3}) &\cong (\End_S(A)\ox_S A, 
      \ad_{T_\s}\ox \s)\\
      &\cong (\End_S(A)\ox_S \End_A(A), 
      \ad_{T_\s}\ox \ad_{\qf{1}_\s})\\
      &\cong (\End_{S\ox_S A}(A\ox_S A), \ad_{T_\s \ox \qf{1}_\s}),
  \end{align*}
  which yields the Morita equivalence
  \begin{equation}\label{knp-mo}
    \Herm^\ve (A\ox_S \io A \ox_S A, \s^{\ox 3})\to 
    \Herm^\ve (S\ox_S A,\iota\ox \s),\
    \vf \mapsto (T_\s\ox \qf{1}_\s) \knp \vf
  \end{equation}
  by \cite[I, Theorem~9.3.5]{knus91}. 
  The isomorphism of right $A \ox_S A$-modules
  \[((M_1 \ox_S \io M_2) \ox_S M_3) \ox_{(A \ox_S \io A) \ox_S A} (A \ox_S A)
  \rightarrow ((M_1 \ox_s \io M_2) \ox_{A \ox_S \io A} A) \ox_S (M_3 \ox_A A),\]
  \[m_1 \ox m_2 \ox m_3 \ox a \ox b \mapsto m_1 \ox m_2 \ox a \ox m_3 \ox b,\]
  followed by  the isomorphism of right $A$-modules $M_3 \ox_A A \rightarrow
  M_3$, $m_3 \ox b \mapsto m_3b$, then yield the isometries
  \[
    (T_\s \ox \qf{1}_\s)  \knp (h_1\ox \io h_2 \ox h_3)\simeq
    T_\s  \knp (h_1\ox \io h_2)  \ox \qf{1}_\s  \knp h_3 \simeq
    (h_1 * h_2) \ox h_3. 
  \]
  A similar argument shows that  $(T_\s \ox 
  \qf{1}_\s)  \knp (h_3\ox \io h_2 \ox h_1) \simeq (h_3 * h_2) \ox h_1$. 
  The result then follows from the isometry $(M,h)\simeq (M',h')$ since
  \eqref{knp-mo} preserves isometries. 
\end{proof}

\begin{rem}
  The pairing $*$ was introduced and studied in detail
  for $\ve$-hermitian forms over
  central simple algebras with involution by Garrel in
  \cite{garrel-2023}. (A similar construction for
  quaternion algebras had already been considered by Lewis \cite{Lew}, using
  the norm form instead of the involution trace form of the quaternion
  conjugation, cf. \cite[Remark~4.4]{garrel-2023}.) 
  In our presentation we stayed close to Garrel's approach via hermitian
  Morita theory. We are grateful to
  the second referee for suggesting an alternative
  approach via the $S$-linear isomorphism
  \[
    M_1 \ox_A \iss M_2 \to (M_1\ox_S \io M_2)\ox_{A\ox_S \io A} A,\
    m_1\ox m_2 \mapsto (m_1\ox m_2)\ox 1
   \]  
   (where $\iss M_2$ is the left $A$-module obtained by twisting the right
   $A$-module structure of $M_2$ by $\s$)
   with inverse
   \[
     (M_1\ox_S \io M_2)\ox_{A\ox_S \io A} A \to  M_1 \ox_A \iss M_2,
     (m_1\ox m_2)\ox a \mapsto m_1 a \ox m_2 = m_1 \ox m_2 \s(a),
   \]
   from which the pairing $*$ can be defined directly as
   \[
     h_1 * h_2 (m_1 \ox m_2, m_1' \ox m_2') := 
     \Trd_A\bigl(h_1(m_1,m_1'), \s(h_2(m_2, m_2'))   \bigr).
   \]
\end{rem}

We finish this section with a number of results for later use. We first consider
\cite[Proposition~4.9]{garrel-2023} in our context:

\begin{lemma}\label{exb*c}
  If $b,c\in \Sym(A,\s)$, then 
  $\qf{b}_\s * \qf{c}_\s\simeq \vf_{b,c}$, where $(A,\vf_{b,c})\in 
  \bH(S,\iota)$ is given by
  \[
    \vf_{b,c}\colon A\x A \to S,\ (x,y)\mapsto \Trd_A(\s(x)by c).
  \]
\end{lemma}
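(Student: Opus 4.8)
The plan is to unwind the definition of the pairing $\qf{b}_\s * \qf{c}_\s$ and exhibit an explicit isometry onto $(A,\vf_{b,c})$. Here $M_1 = M_2 = A$ as a right $A$-module, $h_1 = \qf{b}_\s$ and $h_2 = \qf{c}_\s$, so $h_1(x,x') = \s(x)bx'$ and $h_2(y,y') = \s(y)cy'$. The underlying $S$-module of the pairing is $(A \ox_S \io A) \ox_{A \ox_S \io A} A$, and the natural candidate for the isometry is the $S$-linear map
\[
  \Phi: (A \ox_S \io A) \ox_{A \ox_S \io A} A \to A, \quad m_1 \ox m_2 \ox a \mapsto \s(m_2)\,\s(a)\,\s(m_1)
\]
or some close variant (one should pick the variant that makes the bookkeeping with the twisted sandwich action $\tsa$ come out right). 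First I would check that $\Phi$ is well-defined, i.e.\ compatible with the relation defining the tensor product over $A \ox_S \io A$: for $x,y \in A$ one needs
$\Phi\bigl((m_1 \ox m_2)\cdot(x \ox y) \ox a\bigr) = \Phi\bigl(m_1 \ox m_2 \ox (x \ox y \tsa a)\bigr)$,
which by \eqref{twsa} amounts to a direct computation using $(m_1 \ox m_2)\cdot(x\ox y) = m_1 x \ox m_2 y$ and $x \ox y \tsa a = xa\s(y)$. Then I would check $\Phi$ is bijective: surjectivity is clear since $1 \ox 1 \ox a \mapsto \s(a)$, and injectivity follows because every element of the tensor product can be reduced, using the action, to the form $1 \ox 1 \ox a$ (absorb $m_1$ and $m_2$ into $a$ via the twisted sandwich action), so the module is free of rank one over $S$ with generator $1 \ox 1 \ox 1$.

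The heart of the argument is the compatibility of $\Phi$ with the forms. Using the explicit formula \eqref{G-eq8} for $h_1 * h_2$ with $h_1 = \qf{b}_\s$, $h_2 = \qf{c}_\s$, we get
\[
  (\qf{b}_\s * \qf{c}_\s)(m_1 \ox m_2 \ox a, m_1' \ox m_2' \ox a')
    = \Trd_A\bigl(\s(a)\,\s(m_1)b m_1' a'\,\s(\s(m_2)c m_2')\bigr),
\]
and one must verify this equals $\vf_{b,c}(\Phi(m_1 \ox m_2 \ox a), \Phi(m_1' \ox m_2' \ox a'))$. Plugging in $\Phi(m_1 \ox m_2 \ox a) = \s(m_2)\s(a)\s(m_1)$, the right-hand side is
$\Trd_A\bigl(\s(\s(m_2)\s(a)\s(m_1))\,b\,\s(m_2')\s(a')\s(m_1')\,c\bigr)
= \Trd_A\bigl(m_1 a m_2\,b\,\s(m_2')\s(a')\s(m_1')\,c\bigr)$,
and after moving factors around inside $\Trd_A$ using its cyclicity (Lemma~\ref{lem-tr} together with the cyclic invariance of the reduced trace, $\Trd_A(uv) = \Trd_A(vu)$), together with $\s(c) = c$, $\s(b) = b$, this should match the left-hand side. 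So the main work is this trace manipulation, keeping careful track of where the $\s$'s land; the correctness of the chosen variant of $\Phi$ is essentially dictated by forcing this identity to hold.

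The step I expect to be the main obstacle is pinning down the precise form of $\Phi$ (in particular, the placement and order of the $\s$'s and which arguments get conjugated), since the $\io M_2$ business and the twisted sandwich action introduce several sign-free but easy-to-confuse twists; once $\Phi$ is correctly guessed, well-definedness, bijectivity, and the isometry check are all routine computations with $\Trd_A$ and $\s$. An alternative, perhaps cleaner, route is to invoke Lemma~\ref{lem:preserves}(3) to reduce to $b = c = 1$ after absorbing $b$ and $c$ into isometries of $\qf{b}_\s$ and $\qf{c}_\s$ — but since $\qf{b}_\s \not\simeq \qf{1}_\s$ in general, this reduction is not available, so the direct computation above seems to be the way to go.
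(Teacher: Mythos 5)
Your overall plan is the same as the paper's: identify the underlying module of $\qf{b}_\s * \qf{c}_\s$ with $A$ by an explicit map and verify the isometry using \eqref{G-eq8}, cyclicity of $\Trd_A$ and $\s(b)=b$, $\s(c)=c$. However, as written there is a genuine gap at exactly the point you flag as the ``main obstacle'': the candidate map you give, $\Phi(m_1\ox m_2\ox a)=\s(m_2)\s(a)\s(m_1)$, is not well defined on $(A\ox_S \io A)\ox_{A\ox_S\io A}A$. Indeed, the defining relation gives $(x\ox y)\ox a = (1\ox 1)\ox\bigl(x\ox y\tsa a\bigr)=(1\ox1)\ox xa\s(y)$, and applying $\Phi$ to the two sides yields $\s(y)\s(a)\s(x)$ versus $\s\bigl(xa\s(y)\bigr)=y\,\s(a)\s(x)$, which differ unless $\s(y)=y$. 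Since you explicitly leave the determination of the ``right variant'' open, the proof is incomplete precisely at the step that constitutes the content of the lemma.

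The fix is small and removes the guesswork entirely: the left-hand tensor factor is the ring $A\ox_S\io A$ itself, so the canonical isomorphism $B\ox_B N\cong N$ (with $B=A\ox_S\io A$ and $N=A$ carrying the twisted sandwich action) gives the map
\[
  f:(A\ox_S \io A)\ox_{A\ox_S\io A}A\to A,\qquad x\ox y\ox a\mapsto (x\ox y)\tsa a = xa\s(y),
\]
which is automatically well defined, bijective and $S$-linear, so no separate verification is needed. This is the map used in the paper, and with it your intended trace computation goes through verbatim:
\[
  \vf_{b,c}\bigl(x_1a_1\s(y_1),\,x_2a_2\s(y_2)\bigr)
  =\Trd_A\bigl(y_1\s(a_1)\s(x_1)b\,x_2a_2\s(y_2)c\bigr)
  =\Trd_A\bigl(\s(x_1a_1)b\,x_2a_2\,\s(\s(y_1)cy_2)\bigr),
\]
which is $\qf{b}_\s*\qf{c}_\s$ evaluated on the corresponding elements by \eqref{G-eq8}. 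Also note that your remark that injectivity follows because ``the module is free of rank one over $S$ with generator $1\ox1\ox1$'' is not an argument you need (and would itself require justification); the canonical isomorphism above supersedes it.
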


\begin{proof}
  The form $\qf{b}_\s * \qf{c}_\s$ is defined on the $S$-module
  $(A\ox_S \io A)\ox_{A\ox_S \io A} A$.  
  Since $A$ is a left $A \ox_S \io A$-module, the left action of $A \ox_S \io A$
  induces an isomorphism of left $A \ox_S \io A$-modules (and thus of $S$-modules):
    \[
      f\colon  (A\ox_S \io A)\ox_{A\ox_S \io A} A \to A,\ x\ox y\ox a \mapsto 
      (x\ox y) \tsa a=
      xa\s(y).
    \]
    Using \eqref{G-eq8}, we verify that it is the required isometry:
    \begin{align*}
      \vf_{b,c} (&f(x_1\ox y_1\ox a_1),f(x_2\ox y_2\ox a_2) )\\ 
        &= \vf_{b,c}(x_1a_1\s(y_1), x_2a_2\s(y_2))
        =\Trd_A(y_1\s(a_1)\s(x_1)bx_2a_2\s(y_2) c)\\
        &=\Trd_A(\s(a_1)\s(x_1)bx_2a_2\s(y_2) c y_1)
        =\Trd_A(\s(x_1a_1)bx_2a_2 \s(\s(y_1) c y_2))\\
        &=\Trd_A(\qf{b}_\s(x_1a_1,x_2a_2) \s(\qf{c}_\s (y_1,y_2)))\\
        &=\qf{b}_\s * \qf{c}_\s (x_1\ox y_1\ox a_1, x_2\ox y_2\ox a_2 ).\qedhere
    \end{align*}
\end{proof}

Next we show that $*$ is well-behaved under scalar extensions, and start with 
the
following lemma (for which we could not find a reference):

\begin{lemma}\label{horrible-tensor}
  Let $\Lambda$ be a commutative ring, and let 
  $B$ and $C$ be $\Lambda$-algebras. Let $M_1$ be a right $B$-module and 
  $M_2$  a left
  $B$-module. Then
  \begin{align*}
    (M_1 \ox_B M_2) \ox_\Lambda 
    C &\rightarrow (M_1 \ox_\Lambda C) \ox_{B \ox_\Lambda C} (M_2 \ox_\Lambda
    C), \\
    (m_1 \ox m_2) \ox c &\mapsto (m_1 \ox 1) \ox (m_2 \ox c)
  \end{align*}
  is an isomorphism of right $C$-modules.
\end{lemma}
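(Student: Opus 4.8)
The plan is to prove that the stated map $\Phi\colon (M_1 \ox_B M_2) \ox_\Lambda C \to (M_1 \ox_\Lambda C) \ox_{B \ox_\Lambda C} (M_2 \ox_\Lambda C)$ is an isomorphism by exhibiting an explicit two-sided inverse. First I would check that $\Phi$ is a well-defined homomorphism of right $C$-modules: one verifies that $(m_1,m_2,c)\mapsto (m_1\ox 1)\ox(m_2\ox c)$ is $\Lambda$-multilinear and $B$-balanced in the first two arguments (using that for $b\in B$, $(m_1 b\ox 1)\ox(m_2\ox c) = (m_1\ox 1)(b\ox 1)\ox(m_2\ox c) = (m_1\ox 1)\ox(b\ox 1)(m_2\ox c) = (m_1\ox 1)\ox(bm_2\ox c)$, where the middle equality is the tensor relation over $B\ox_\Lambda C$), and that it is $C$-linear in the obvious sense, so it factors through the stated domain.

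Next I would construct the candidate inverse $\Psi$. The natural target is to send a generator $(m_1\ox c_1)\ox(m_2\ox c_2)$ of the right-hand side to $(m_1\ox m_2)\ox c_1c_2$. To see this is well-defined, note first that the map $(M_1\ox_\Lambda C)\x(M_2\ox_\Lambda C)\to (M_1\ox_B M_2)\ox_\Lambda C$ sending $(m_1\ox c_1, m_2\ox c_2)\mapsto (m_1\ox m_2)\ox c_1c_2$ is $\Lambda$-bilinear, hence factors through $(M_1\ox_\Lambda C)\ox_\Lambda(M_2\ox_\Lambda C)$; then one checks it kills the relations defining the tensor product over $B\ox_\Lambda C$, i.e. that for $b\in B$, $c\in C$ the images of $(m_1\ox c_1)(b\ox c)\ox(m_2\ox c_2)$ and $(m_1\ox c_1)\ox(b\ox c)(m_2\ox c_2)$ agree — both equal $(m_1b\ox m_2)\ox c_1cc_2 = (m_1\ox bm_2)\ox c_1cc_2$, using the $B$-balancing in $M_1\ox_B M_2$ and commutativity of $C$ to move the scalar $c$ past $c_2$. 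This gives a well-defined $C$-linear map $\Psi$.

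Finally I would verify $\Psi\circ\Phi = \id$ and $\Phi\circ\Psi = \id$ on generators, which are immediate: $\Psi(\Phi((m_1\ox m_2)\ox c)) = \Psi((m_1\ox 1)\ox(m_2\ox c)) = (m_1\ox m_2)\ox c$, and $\Phi(\Psi((m_1\ox c_1)\ox(m_2\ox c_2))) = \Phi((m_1\ox m_2)\ox c_1c_2) = (m_1\ox 1)\ox(m_2\ox c_1c_2) = (m_1\ox c_1)\ox(m_2\ox c_2)$, where the last step again uses that $c_1\in C$ may be pulled across the $B\ox_\Lambda C$-tensor as the central element $1\ox c_1$.

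The only mild subtlety — and the step I would write out most carefully — is the well-definedness of $\Psi$, specifically checking that it respects the $B\ox_\Lambda C$-bilinearity relation: this is where both the $B$-balancing of $M_1\ox_B M_2$ and the commutativity of $\Lambda$ (so that $B\ox_\Lambda C$ makes sense and scalars from $C$ move freely) are used simultaneously. Everything else is a routine diagram chase on generators, so I would state these verifications as straightforward and not belabor them.
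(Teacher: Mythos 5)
Your proposal is correct and is essentially the paper's own proof: the paper also defines the inverse map $(m_1 \ox c_1) \ox (m_2 \ox c_2) \mapsto (m_1 \ox m_2) \ox c_1c_2$ and records that the well-definedness, mutual inverse, and right $C$-linearity checks are standard (if lengthy) verifications, which you spell out in slightly more detail. The only quibble is that your balancing check for $\Psi$ does not actually need commutativity of $C$ (the two images already agree as written, using only $B$-balancedness in $M_1 \ox_B M_2$), so that invocation is superfluous but harmless.
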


\begin{proof}
  Let $f$ be the map defined in the statement of the lemma and let
  \begin{align*}
    g\colon  (M_1 \ox_\Lambda C) \ox_{B \ox_\Lambda C} (M_2 \ox_\Lambda C)
     &\rightarrow (M_1 \ox_B M_2) \ox_\Lambda C, \\
    (m_1 \ox c_1) \ox (m_2 \ox c_2) &\mapsto (m_1 \ox m_2) \ox c_1c_2.
  \end{align*}
  A standard (but lengthy) verification shows that $f$ and $g$ are well-defined
  and additive, are inverses of each other, and that $f$ is right $C$-linear.
\end{proof}

\begin{lemma}\label{*-ext-scalars}
  Let $T$ be a commutative $R$-algebra. Then
  \[(h_1 * h_2) \ox_R T \simeq (h_1 \ox_R T) * (h_2 \ox_R T).\]
\end{lemma}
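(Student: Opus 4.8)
The plan is to track the module underlying $(h_1 * h_2) \ox_R T$ through a chain of canonical isomorphisms until it matches the module underlying $(h_1 \ox_R T) * (h_2 \ox_R T)$, and then check that the forms correspond under this identification. Write $S_T := S \ox_R T = Z(A \ox_R T)$ (using Proposition~\ref{change-of-base}) and $A_T := A \ox_R T$, with induced involution $\s_T := \s \ox \id$ and $\iota_T := \s_T|_{S_T}$; by Change of base ring and Remark~\ref{rem-first-0}, $(A_T, \s_T)$ is again an Azumaya algebra with involution over $T$, its reduced trace is $\Trd_A \ox \id$ (reduced trace is invariant under scalar extension), and hence its involution trace form is $T_{\s_T} = T_\s \ox_R T$. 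The key point is that forming $\io{(\,\cdot\,)}$ and the twisted sandwich bimodule $A$ commute with $\ox_R T$: one has $\io{(M_2)} \ox_R T \cong \io{(M_2 \ox_R T)}$ as $(S_T$-acting) modules, and $A \ox_R T$ with its $(A_T \ox_{S_T} \io{A_T})$-$S_T$ bimodule structure is exactly the scalar extension of $A$ as an $(A \ox_S \io A)$-$S$ bimodule.

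The central step is to apply Lemma~\ref{horrible-tensor} with $\Lambda = S$ (or rather, after a base-change bookkeeping, with the ring $A \ox_S \io A$ playing the role of $B$): the module $\bigl((M_1 \ox_S \io M_2) \ox_{A \ox_S \io A} A\bigr) \ox_R T$ should be rewritten, first as $\bigl((M_1 \ox_S \io M_2) \ox_{A \ox_S \io A} A\bigr) \ox_S S_T$ (since $M \ox_R T \cong M \ox_S S_T$ for an $S$-module $M$, as $S_T = S \ox_R T$), and then Lemma~\ref{horrible-tensor} turns this into
\[
\bigl((M_1 \ox_S \io M_2) \ox_S S_T\bigr) \ox_{(A \ox_S \io A) \ox_S S_T} (A \ox_S S_T).
\]
Now $(A \ox_S \io A) \ox_S S_T \cong A_T \ox_{S_T} \io{(A_T)}$ and $(M_1 \ox_S \io M_2) \ox_S S_T \cong (M_1 \ox_R T) \ox_{S_T} \io{(M_2 \ox_R T)}$ by the standard compatibility of tensor products with base change, and $A \ox_S S_T \cong A_T$ as the twisted sandwich bimodule. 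Assembling these identifications produces precisely the module $\bigl((M_1 \ox_R T) \ox_{S_T} \io{(M_2 \ox_R T)}\bigr) \ox_{A_T \ox_{S_T} \io{(A_T)}} A_T$ underlying $(h_1 \ox_R T) * (h_2 \ox_R T)$.

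Finally I would check that the composite isomorphism is an isometry. This is a direct computation: by formula~\eqref{G-eq8}, on pure tensors both forms are given by applying $\Trd_{A_T} = \Trd_A \ox \id$ to an expression built from $h_1(\text{-})$, $h_2(\text{-})$, $\s$ and the twisted sandwich action, and since each of $h_1, h_2, \s, \Trd_A$ base-changes entrywise, the two sides agree after unwinding the definitions — this is the same style of verification as in the proof of Lemma~\ref{exb*c}, just carried out over $A_T$.

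\textbf{Main obstacle.} The genuinely delicate point is not the isometry check but the careful bookkeeping in the module isomorphism: one must be scrupulous about which ring is being used as the base for each tensor product ($R$ versus $S$ versus $S_T$), about the left/right module structures and the role of $\io{(\,\cdot\,)}$ (so that the twisting by $\iota$ is compatible on both sides), and about the fact that Lemma~\ref{horrible-tensor} is stated for a tensor product over a ring $B$ while here $B = A \ox_S \io A$ is itself noncommutative and built over $S$, not over $T$. Getting these compatibilities to line up without a sign or a twist error is the crux; once the module identification is pinned down, everything else is routine.
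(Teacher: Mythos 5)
Your proposal is correct and follows essentially the same route as the paper: rearrange the underlying module via Lemma~\ref{horrible-tensor} (the paper applies it twice over the base $R$, where you convert $\ox_R T$ into $\ox_S S_T$ first and apply it once, which is only a bookkeeping variant), and then verify the isometry on pure tensors using \eqref{G-eq8} and the invariance of $\Trd_A$ under scalar extension. The compatibilities you flag (the $\io{(\,\cdot\,)}$ twist and the bimodule structure commuting with base change) are exactly the checks the paper's explicit computation carries out, so there is no gap.
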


\begin{proof}
  The form $(h_1 * h_2) \ox_R T$ is defined on 
  $((M_1 \ox_S \io M_2) \ox_{A \ox_S \io A} A) \ox_R T$.
  Using Lemma~\ref{horrible-tensor} twice we have 
  \begin{align}
    ((M_1 \ox_S & \io M_2) \ox_{A \ox_S \io A} A) \ox_R T \notag \\ 
    & \cong ((M_1 \ox_S \io M_2) \ox_R T)
    \ox_{(A \ox_S \io A) \ox_R T} (A \ox_R T)  \notag \\ 
    &\cong ((M_1 \ox_R T) \ox_{S \ox_R T} (\io M_2 \ox_R T)) 
    \ox_{(A \ox_R T) \ox_{S
    \ox_R T} (\io A \ox_R T)} (A \ox_R T),   \label{*-2} 
  \end{align}
  and
  the successive isomorphisms are given by
  \[ [(m_1 \ox m_2) \ox a] \ox t \mapsto [(m_1 \ox m_2) \ox 1] \ox (a \ox t)
  \mapsto [(m_1 \ox 1) \ox (m_2 \ox 1)] \ox (a \ox t).\]
  We denote the composition of these two isomorphisms by $\xi$.

  By definition, $(h_1 * h_2) \ox_R T = (T_\s \knp (h_1 \ox_S \io h_2)) \ox_R T$,
  while 
  \[
  (h_1 \ox_R T) * (h_2 \ox_R T) = T_{\s \ox \id_T} \knp ((h_1 \ox_R T)
  \ox_{S\ox_R T} (\io h_2 \ox_R T)) 
  \]
  and is defined on the module \eqref{*-2} (using that $Z(A
  \ox_R T) = Z(A) \ox_R T$, cf. Proposition~\ref{change-of-base}). We check that
  $\xi$ is an isometry:
  \begin{align*}
    &T_{\s \ox \id_T} \knp ((h_1 \ox_R T) \ox_{S \ox_R T} (\io h_2 \ox_R T))
    (\xi(m_1 \ox m_2 \ox
    a \ox t), \xi(m'_1 \ox m'_2 \ox a' \ox t') ) \\
    &= \Trd_{A \ox T} \left((h_1 \ox T)((m_1 \ox 1)(a \ox t), 
    (m'_1 \ox 1)(a' \ox t'))\right.\\ 
    &\qquad\qquad\qquad\qquad\qquad\qquad\qquad 
    \left. \cdot (\s \ox \id_T)((h_2 \ox T)(m_2 
    \ox 1, m'_2 \ox 1)) \right) \\
    &= \Trd_{A \ox T} \left((h_1 \ox T)(m_1a \ox t, m'_1a' \ox t') (\s \ox
    \id_T)((h_2 \ox T)(m_2 \ox 1, m'_2 \ox 1))\right) \\
    &= \Trd_{A \ox T} \left((h_1(m_1a,m'_1a') \ox tt') (\s \ox
    \id_T)(h_2(m_2, m'_2) \ox 1)\right) \\
    &= \Trd_{A \ox T}(h_1(m_1a,m'_1a') \s(h_2(m_2, m'_2)) \ox tt') \\
    &= \Trd_{A}(h_1(m_1a, m'_1a') \s(h_2(m_2, m'_2))) \ox tt' \\
    &= ((T_\s \knp (h_1 \ox \io h_2)) \ox_R T)((m_1 \ox m_2 \ox a) \ox t, (m'_1 \ox m'_2
    \ox a') \ox t'), 
  \end{align*}
  where we used that $\Trd_A$ commutes with scalar extension in the penultimate
  step.
\end{proof}

\begin{rem}
  Recall that when $S$ is not connected, the Witt groups $W^\ve(A,\s)$
  and $W^\ve(S,\iota)$ are trivial, cf. Remark~\ref{rem:notcon}.
  Therefore the construction of the pairing $*$ is not interesting in this case.
\end{rem}

\section{Pairings and PSD quadratic forms}\label{link-with-psd}

Throughout this section we assume that the commutative ring $R$ 
(with $2\in R^\x$) is semilocal.
Let $(A,\s)$ be an Azumaya algebra with involution over $R$,  $S=Z(A)$
and $\iota=\s|_S$.

Let $(M,h)$ be a hermitian form over $(S,\iota)$. Then 
$h(x,x)\in \Sym(S,\iota)=R$ for all $x\in M$. Therefore we say that $h$ is
positive semidefinite (resp. negative  semidefinite)
at $\alpha \in \Sper R$ if $h(x,x)\in \alpha$ (resp. $h(x,x)\in -\alpha$ )   
for all $x\in M$. We use the standard abbreviations PSD and NSD.

\begin{lemma}\label{qf-PSD-no-pc}
  Let $(M,h)$ be a hermitian form over $(S,\iota)$ and let $\alpha \in \Sper
  R$. Then $h$ is PSD at $\alpha$ if and only if $h \ox \kappa(\alpha)$ is PSD
  at the ordering $\bar \alpha$ induced by $\alpha$ on $\kappa(\alpha)$.
\end{lemma}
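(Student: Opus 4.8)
The plan is to prove the equivalence by unwinding the definitions and using Remark~\ref{M-ox} to control which elements of $M \ox_R \kappa(\alpha)$ we need to test.

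\textbf{Easy direction.} Suppose $h$ is PSD at $\alpha$, i.e. $h(x,x) \in \alpha$ for all $x \in M$. I want to show that $(h \ox \kappa(\alpha))(z,z) \in \bar\alpha$ for all $z \in M \ox_R \kappa(\alpha)$. By Remark~\ref{M-ox}, any such $z$ can be written as $z = m \ox \frac{1}{\bar r}$ for some $m \in M$ and $r \in R \setminus \Supp(\alpha)$. Then $(h \ox \kappa(\alpha))(z,z) = h(m,m) \ox \frac{1}{\bar r^2} = \overline{h(m,m)} \cdot \frac{1}{\bar r^2}$ in $\kappa(\alpha) = \Qf(R/\Supp(\alpha))$. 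Since $h(m,m) \in \alpha$ and $\alpha$ is an ordering on $R$, its image $\overline{h(m,m)}$ lies in $\bar\alpha$; and $\frac{1}{\bar r^2}$ is a nonzero square in $\kappa(\alpha)$, hence positive at $\bar\alpha$. Therefore the product lies in $\bar\alpha$, proving $h \ox \kappa(\alpha)$ is PSD at $\bar\alpha$.

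\textbf{Converse direction.} Suppose $h \ox \kappa(\alpha)$ is PSD at $\bar\alpha$. Let $m \in M$; I must show $h(m,m) \in \alpha$. Apply the hypothesis to $z := m \ox 1 \in M \ox_R \kappa(\alpha)$: then $\overline{h(m,m)} = (h \ox \kappa(\alpha))(z,z) \in \bar\alpha$. So $h(m,m) + \Supp(\alpha) \in \bar\alpha$, which by definition of the induced ordering means $h(m,m) \in \alpha$ (recall $\bar\alpha$ is the ordering on $R/\Supp(\alpha)$ with $\alpha = \pi^{-1}(\bar\alpha)$ where $\pi : R \to R/\Supp(\alpha)$; an element of $R$ lies in $\alpha$ iff its image lies in $\bar\alpha$). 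Hence $h$ is PSD at $\alpha$.

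\textbf{Main obstacle.} There is no serious obstacle here; the only point requiring a little care is making sure that the notion of ``PSD for $h \ox \kappa(\alpha)$ at $\bar\alpha$'' is tested correctly — one must either quantify over all of $M \ox_R \kappa(\alpha)$ (and then use Remark~\ref{M-ox} to reduce to elements of the form $m \ox \frac{1}{\bar r}$) or note that clearing the denominator by a square changes nothing at an ordering. Once that bookkeeping is in place, both implications are immediate from the fact that $r \mapsto r + \Supp(\alpha)$ carries $\alpha$ onto $\bar\alpha$ and squares are positive. A routine remark: $h(x,x) \in \Sym(S,\iota) = R$ for all $x$, so the statement typechecks, and the same holds after base change since $\Sym(S \ox_R \kappa(\alpha), \iota \ox \id) = \kappa(\alpha)$ by Proposition~\ref{change-of-base}.
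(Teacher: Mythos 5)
Your proof is correct and follows essentially the same route as the paper: the forward direction tests elements $m \ox \frac{1}{\bar r}$ via Remark~\ref{M-ox} and uses that $(\frac{1}{\bar r})^2$ is a positive square, while the converse tests $m \ox 1$ and pulls $\overline{h(m,m)} \in \bar\alpha$ back to $h(m,m) \in \alpha$. Nothing further is needed.
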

\begin{proof}
  We  use the description of $M
  \ox_R \kappa(\alpha)$ given in Remark~\ref{M-ox}.

  ``$\Rightarrow$'': Let $m \ox \frac{1}{\bar b} \in M \ox_R \kappa(\alpha)$,
  where $m \in M$ and $b \in R \setminus \Supp (\alpha)$. Then
  \[h \ox \kappa(\alpha)(m \ox \dfrac{1}{\bar b}, m \ox \dfrac{1}{\bar b}) = 
  \overline{h(m,m)} (\dfrac{1}{\bar b})^2,\]
  which is in $\bar \alpha$ by hypothesis.

  ``$\Leftarrow$'': Let $m \in M$. We have
  $h \ox \kappa(\alpha)(m \ox 1, m \ox 1) = \overline{h(m,m)}$,
  which belongs to $\bar \alpha$, and thus $h(m,m) \in \alpha$.
\end{proof}

\begin{rem}\label{PSD-NSD}
  Let $F$ be a real closed field and let $P$ be the unique ordering on $F$.
  Recall from Remark~\ref{rem:nil} and the beginning of Section~\ref{sec3}
  that if $(A,\s) = (M_n(D),\vartheta^t)$ with $(D, \vartheta) \in 
  \{(F, \id), (F(\sqrt{-1}), \gamma), ((-1,-1)_F, \gamma)\}$, where $\gamma$
  denotes conjugation, resp. quaternion conjugation,
  and $a \in \Sym(A,\s)$, then $\sign^{\CM_P}_P \qf{a}_\s$ is plus or minus the standard
  Sylvester signature of $a$ at $P$.
  
  Indeed, if $a$ is a matrix in $\Sym(M_n(D), \vartheta^t)$ such that
  $\vartheta^t(a)=a$, then 
  under the hermitian Morita equivalence between $\Herm^\ve (M_n(D), \vt^t)$
  and $\Herm^\ve(D,\vt)$ in \cite[equation (2.1)]{A-U-pos}, 
  the one-dimensional hermitian form $\qf{a}_{\vt^t}$ over
  $(M_n(D), \vartheta^t)$ corresponds to  
  the hermitian form over $(D,\vartheta)$
  whose Gram matrix is $a$, and the signature of $\qf{a}_{\vt^t}$ is 
  defined to be the 
  signature of the matrix $a$. The definition of signature of hermitian forms
  allows the use of a different Morita equivalence $\CM_P$
   than this particular one, which may
  result in a change of sign.

  In particular, there exists $\ve\in\{-1,1\}$ (which depends only on $\CM_P$)
  such that for all 
  $a\in \Sym(M_n(D), \vartheta^t)$, if $\sign^{\CM_P}_P \qf{a}_{\vt^t}$ is 
  maximal (among signatures of hermitian forms of the form $\qf{b}_{\vt^t}$),
  then $\ve a$ is a PSD matrix. 
\end{rem}

\begin{lemma}\label{trace-csa-no-pc}
  Let $F$ be a field and $(B,\tau)$ a central simple $F$-algebra with 
  involution.  
  Let $P \in X_F\setminus \Nil[B,\tau]$.
  There is $\delta \in \{-1,1\}$ such that for every $b,c \in 
  M^{\CM_P}_P(B,\tau)$
  and every $x \in B$, $\Trd_B(\tau(x)bx c ) \in \delta \cdot P$.
\end{lemma}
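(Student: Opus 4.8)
The plan is to reduce everything to the real-closed situation where signatures become Sylvester signatures and maximal-signature elements become (up to a global sign) PSD matrices, and then to use the interaction of the reduced trace with the Morita equivalence to control the sign of $\Trd_B(\tau(x)bxc)$. First I would pass to a real closure $F_P$ of $F$ at $P$, noting that $(B,\tau)\otimes_F F_P\cong(M_n(D),\vartheta^t)$ with $(D,\vartheta)$ one of the three split cases in Remark~\ref{PSD-NSD}; since $P\notin\Nil[B,\tau]$, Lemma~\ref{Nil-product} gives $\bar P\notin\Nil$ for the extension, so (via Corollary~\ref{same-max}) $m_P(B,\tau)=\deg B$ and elements of $M_P(B,\tau)$ extend to elements of $M_{\bar P}$ over $F_P$. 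Scalar extension is harmless here because $\Trd_B$ commutes with scalar extension and $\delta\cdot P$ is detected after base change to $F_P$ (positivity at $P$ is the same as positivity at the unique ordering of $F_P$).

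Next, working over $F_P$ with $(B,\tau)=(M_n(D),\vartheta^t)$, I would invoke Remark~\ref{PSD-NSD}: there is a fixed $\ve\in\{-1,1\}$ (depending only on $\CM_P$) such that every $a\in M_P(M_n(D),\vartheta^t)$ becomes a PSD matrix after multiplication by $\ve$. So for $b,c\in M_P$, both $\ve b$ and $\ve c$ are PSD. The quantity to control is $\Trd_B(\tau(x)bxc)=\Trd_B\bigl(\vartheta^t(x)\,b\,x\,c\bigr)$, and I want to show its sign (positivity at $P$) is independent of $b,c\in M_P$ and of $x\in B$. Writing $b=\ve b_0$, $c=\ve c_0$ with $b_0,c_0$ PSD, we get $\Trd_B(\vartheta^t(x)bxc)=\Trd_B(\vartheta^t(x)b_0 x c_0)$; since $b_0$ is PSD hermitian it has a hermitian square root $b_0=\vartheta^t(s)s$ over $F_P$ (real closed, so PSD hermitian matrices admit such factorizations), and similarly $c_0=\vartheta^t(t)t$. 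Then
\[
\Trd_B(\vartheta^t(x)b_0 x c_0)=\Trd_B\bigl(\vartheta^t(x)\vartheta^t(s)s\,x\,\vartheta^t(t)t\bigr)
=\Trd_B\bigl(t\,\vartheta^t(sx)\,(sx)\,\vartheta^t(t)\bigr)
=\Trd_B\bigl(\vartheta^t(t)\cdot w \cdot t\bigr)
\]
where I used $\Trd_B(yz)=\Trd_B(zy)$ and set $w:=\vartheta^t(sx)(sx)$, which is PSD hermitian. Thus the value equals $\Trd_B\bigl(\vartheta^t(t)\,w\,t\bigr)=\Trd_B\bigl(\vartheta^t(wt)\cdot(wt)\bigr)$ applied to $w=\vartheta^t(v)v$... more simply: $\Trd_B(\vartheta^t(t)wt)$ is the reduced trace of $\vartheta^t(u)u$ with $u=v t$ where $w=\vartheta^t(v)v$, i.e.\ the reduced trace of a PSD hermitian matrix. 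So the plan is to reduce the whole expression to $\Trd_B(z)$ for a PSD hermitian $z$, whose sign at $P$ is then constant: in the three cases, $\Trd_{M_n(D)}(z)$ equals $\Tr$ of $z$ (split orthogonal case), or the reduced trace of a PSD hermitian matrix over $F_P(\sqrt{-1})$ or over the quaternions, and in every case this is a nonnegative (hence positive-or-zero, constant-sign) element of $F_P$, so lies in $\eta\cdot P$ for a fixed $\eta\in\{-1,1\}$ determined by the normalization conventions (it will just be $+1$, but we do not need to pin this down). Setting $\delta:=\eta$ — or more honestly, $\delta$ the fixed sign coming out of this computation, independent of $b,c,x$ — finishes the argument; descending back to $F$ is immediate since $\Trd$ commutes with base change and membership in $\delta\cdot P$ is unaffected.

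The main obstacle is the bookkeeping of signs. Remark~\ref{PSD-NSD} only guarantees $\ve a$ PSD for $a$ of \emph{maximal} signature, so I must be careful that $b,c\in M_P(B,\tau)$ indeed have maximal signature over $F_P$ — this is exactly Corollary~\ref{same-max}(2) combined with $P\notin\Nil$, which forces $m_P=\deg B=m_{\bar P}$ so that $M_P$ elements remain maximal after scalar extension. A second subtlety is that in the symplectic/quaternion case the factorization $b_0=\vartheta^t(s)s$ and the claim ``$\Trd$ of a PSD hermitian matrix has constant sign'' need the standard structure theory over a real closed field (PSD hermitian forms over $(F_P,\id)$, $(F_P(\sqrt{-1}),\gamma)$, $((-1,-1)_{F_P},\gamma)$ are diagonalizable with positive entries, and the reduced trace of $\mathrm{diag}(p_1,\dots,p_n)$ with $p_i\ge 0$ is a sum of nonnegative terms). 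I expect these are routine given the split description, so the genuine content is the reduction chain: $\Nil$-freeness $\Rightarrow$ maximal-signature preserved under scalar extension $\Rightarrow$ $\ve b,\ve c$ PSD $\Rightarrow$ $\Trd(\tau(x)bxc)$ is, up to the fixed sign $\ve^2=1$, the reduced trace of a PSD hermitian matrix $\Rightarrow$ constant sign.
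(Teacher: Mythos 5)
There is a genuine gap in your reduction step: after passing to the real closure you assert that $(B,\tau)\ox_F F_P\cong(M_n(D),\vartheta^t)$, i.e.\ that the involution becomes the \emph{standard} (conjugate-)transpose involution. This is false in general. Over $F_P$ one only gets $(B,\tau)\ox_F F_P\cong(M_n(D),\Int(a)\circ\vartheta^t)$ for some unit $a$ with $\vartheta(a)^t=\pm a$: for instance the adjoint involution of the indefinite form $\qf{1,-1}$ on $M_2(F_P)$ is orthogonal and non-nil but not isomorphic to the transpose involution, and such indefinite adjoint involutions are the typical case. Your next step then collapses: when $\tau=\Int(a)\circ\vartheta^t$ with $a$ indefinite, the elements $b,c\in M_P(B,\tau)$ are $\tau$-symmetric but \emph{not} $\vartheta^t$-hermitian, so the statement ``$\ve b$, $\ve c$ are PSD matrices'' is not even meaningful, and the factorizations $b_0=\vartheta^t(s)s$, $c_0=\vartheta^t(t)t$ on which your whole trace computation rests are unavailable. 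Remark~\ref{PSD-NSD} only speaks about $\vartheta^t$-symmetric elements of maximal signature.

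Handling this unit $a$ is precisely the content of the paper's proof: one scales, $a^{-1}\qf{b}_\tau=\qf{a^{-1}b}_{\vt^t}$, to see that $a^{-1}b$ and $a^{-1}c$ have maximal signature for the standard involution, so $\ve a^{-1}b$ and $\ve a^{-1}c$ are PSD; then the reduced-trace manipulation must carry the $a$'s along, using $\vartheta(a)^t=\delta a$ (whence the $\delta$ in the statement, which your argument would force to be $+1$ without justification in the twisted case) and the fact that the congruence $\vartheta(a)^t c' a$ of a PSD matrix is again PSD, hence a hermitian square $\vartheta(c'')^tc''$, before concluding as you do by cyclicity of $\Trd_B$. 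Your computation is essentially correct in the special case $\tau=\vartheta^t$ (where indeed $\delta=1$), and your first reduction (maximality of signature is preserved under extension to $F_P$, via \cite[Proposition~6.7]{A-U-pos}) is fine; but without the scaling-by-$a$ step and the attendant sign bookkeeping the proof does not cover the general non-nil involution, so as written it does not establish the lemma.
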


\begin{proof}
  By \cite[Proposition~6.7 and equation~(6.1)]{A-U-pos}, we have
  \[m_P(B,\tau) = m_{P'}(B \ox_F F_P, \tau \ox \id),\]
  where $P'$ denotes the unique ordering on $F_P$. Hence,
  $x \in M^{\CM_P}_P(B,\tau)$ implies $x \ox 1 \in M^{\CM_{P'}}_{P'}(B
  \ox_F F_P, \tau \ox \id)$.
  Therefore, we can assume that $F$ is real closed with unique ordering $P$,
  that 
  $(B,\tau) = (M_n(D), \Int(a)\circ \vt^t)$ for some $a \in M_n(D)^\x$
  with $\vt$ of the same kind as
  $\tau$ and
  $(D,\vt)  \in \{(F, \id), (F(\sqrt{-1}), \gamma),
  ((-1,-1)_F, \gamma)\}$,  where $\gamma$
  denotes conjugation, resp. quaternion conjugation 
  (cf. \cite[First page of Section~2.3]{A-U-PS}).
  Note that $\vt(a)^t = \delta a$ for some $\delta \in \{-1,1\}$
  by \cite[Propositions~2.7 and 2.18]{BOI}.
  Furthermore, the algebra $(B,\tau)$ is formally real in the sense of
  \cite[Definition~3.2]{A-U-pos} since $P \not \in \Nil[B,\tau]$ (apply
  \cite[Proposition~6.6]{A-U-pos}, where $\widetilde X_F := X_F \setminus
  \Nil[B,\tau]$), and it follows that we can choose the involution $\vt$
  such that $\delta = 1$ by \cite[Corollary~3.8]{A-U-pos}.

  With reference to \cite[Remark~3.13]{A-U-Kneb}, observe that
  \[  a^{-1}\qf{b}_\tau =\qf{a^{-1}b}_{\Int(a^{-1})\circ \tau}
  =\qf{a^{-1}b}_{\vt^t},\]
  and that there is a Morita equivalence $\CM'_P$ such that
  \[
    m_P(B,\tau)=  \sign_P^{\CM_P} \qf{b}_\tau =  
    \sign_P^{\CM'_P} a^{-1}\qf{b}_\tau =  \sign_P^{\CM'_P} \qf{a^{-1}b}_{\vt^t}.
  \]
  Similarly we have $\sign^{\CM'_P}_P \qf{a^{-1}c}_{\vt^t} = m_P(B,\tau)$.
  By Remark~\ref{PSD-NSD} there exists $\varepsilon \in \{-1,1\}$ 
  such that $\ve a^{-1}b$ and $\ve a^{-1}c$ are both PSD with respect to $P$.
  Write $b':=\ve a^{-1} b $ 
  and $c':= \ve a^{-1} c $. 
  The matrix  $\vt(a)^t c' a$  is then PSD over $(D,\vt)$, and thus is a hermitian
  square in $(M_n(D), \vt^t)$, i.e.,  there
  exists a matrix $c''\in M_n(D)$ such that $\vt(a)^t c' a = \vt(c'')^t c''$
  (this is a classical consequence of the principal axis theorem, which also
  holds for quaternion matrices by  \cite[Corollary~6.2]{Zhang}, cf. 
  \cite[Appendix~A]{A-U-pos}).  
  It follows that
  \begin{align*}
    \Trd_B(\tau(x)bxc) &= \Trd_B((\Int(a)\circ \vt^t)(x)(a \varepsilon b')x(a 
    \varepsilon c')) 
                     = \Trd_B(a \vt(x)^t a^{-1}a b' x a c') \\
                     &= \Trd_B(a \vt(x)^t b'xac') 
                     = \delta \Trd_B(\vt(x)^t b'x \vt(a)^t c' a) \\
                     &= \delta \Trd_B(\vt(x)^t b'x \vt(c'')^t c'' )
                     = \delta \Trd_B(c''\vt(x)^t b'x \vt(c'')^t),
  \end{align*}
  which belongs to $\delta P$ since $b'$ is PSD.
\end{proof}

\begin{prop}\label{quadratic0-no-pc}
  Let $\alpha \in \Sper R \setminus \Nil[A,\s]$.
  Then 
  there is $\delta \in \{-1,1\}$ such that for
  every $b, c \in M^\CM_\alpha(A,\s)$, the hermitian form $\delta (\qf{b}_\s *
  \qf{c}_\s)$ is nonsingular and PSD with respect to $\alpha$.
\end{prop}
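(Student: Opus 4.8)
The plan is to reduce the statement, via scalar extension to $\kappa(\alpha)$, to the central simple algebra case already handled in Lemma~\ref{trace-csa-no-pc}. First, $S$ is connected by Remark~\ref{S-connected}(2) (this uses Lemma~\ref{Nil-product} together with the hypothesis $\alpha \notin \Nil[A,\s]$). Next, by Lemma~\ref{exb*c} I would replace $\qf{b}_\s * \qf{c}_\s$ by the isometric form $\vf_{b,c}\in\bH(S,\iota)$, $(x,y)\mapsto \Trd_A(\s(x)byc)$ on $A$; it is nonsingular by Lemma~\ref{lem:preserves}(2), so $\delta\vf_{b,c}$ is nonsingular for either sign $\delta$, and only the PSD claim remains. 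By Lemma~\ref{qf-PSD-no-pc}, $\delta\vf_{b,c}$ is PSD at $\alpha$ precisely when $\delta\vf_{b,c}\ox_R\kappa(\alpha)$ is PSD at $\bar\alpha$, so I would from the outset pass to the central simple $\kappa(\alpha)$-algebra with involution $(B,\tau) := (A(\alpha),\s(\alpha))$ (Corollary~\ref{change-of-base-bis}), writing $P := \bar\alpha \in X_{\kappa(\alpha)}$.

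The key point is that $P \notin \Nil[B,\tau]$ by Lemma~\ref{Nil-product}, so Lemma~\ref{trace-csa-no-pc} applies to $(B,\tau)$ and $P$ and yields a sign $\delta \in \{-1,1\}$ — depending only on $(B,\tau)$ and $P$, hence only on $(A,\s)$ and $\alpha$, and \emph{not} on $b$ or $c$ — such that $\Trd_B(\tau(y)b'yc') \in \delta P$ for all $b',c' \in M_P(B,\tau)$ and all $y \in B$. To feed our elements into this, I would use Corollary~\ref{same-max}(2) to see that for $b,c \in M_\alpha(A,\s)$ the extended elements $b\ox1$ and $c\ox1$ lie in $M_{\bar\alpha}(A(\alpha),\s(\alpha)) = M_P(B,\tau)$. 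Then, combining Lemma~\ref{*-ext-scalars} with Lemma~\ref{exb*c} a second time, one identifies $\vf_{b,c}\ox_R\kappa(\alpha)$ with $\vf_{b\ox1,\,c\ox1}$ over $(B,\tau)$, whose value at a diagonal pair $(y,y)$ is exactly $\Trd_B(\tau(y)(b\ox1)y(c\ox1))$. This lies in $\delta P$ by the inclusion above, so $\delta\vf_{b,c}\ox_R\kappa(\alpha)$ is PSD at $\bar\alpha$; unwinding the reductions of the first paragraph then gives that $\delta(\qf{b}_\s * \qf{c}_\s)$ is nonsingular and PSD at $\alpha$, with $\delta$ independent of the choice of $b,c$ in $M_\alpha(A,\s)$.

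I do not expect a serious obstacle: the argument is essentially an assembly of Lemmas~\ref{exb*c}, \ref{qf-PSD-no-pc}, \ref{*-ext-scalars}, \ref{trace-csa-no-pc}, \ref{Nil-product} and Corollary~\ref{same-max}. The delicate points are bookkeeping ones. The first is making sure the sign $\delta$ produced by Lemma~\ref{trace-csa-no-pc} is uniform in $b$ and $c$, which it is, being attached to $(B,\tau)$ and $P$ alone. The second is correctly matching up the reduced-trace forms under scalar extension, i.e. verifying $\vf_{b,c}\ox_R\kappa(\alpha)\simeq\vf_{b\ox1,\,c\ox1}$; this is where invariance of the reduced trace under base change (already used in the proof of Lemma~\ref{*-ext-scalars}) does the work. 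One should also record that $Z(B) = S\ox_R\kappa(\alpha)$ is connected — again a consequence of $\alpha\notin\Nil[A,\s]$ via Lemma~\ref{Nil-product}, since $S\ox_R\kappa(\alpha)\cong\kappa(\alpha)\x\kappa(\alpha)$ would force $S\ox_R k(\alpha)\cong k(\alpha)\x k(\alpha)$ — so that Lemma~\ref{exb*c} may legitimately be invoked over $(B,\tau)$.
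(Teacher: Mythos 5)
Your proposal is correct and follows essentially the same route as the paper's own proof: reduce via Lemma~\ref{qf-PSD-no-pc}, Lemma~\ref{*-ext-scalars} and Corollary~\ref{same-max} to the central simple algebra $(A(\alpha),\s(\alpha))$ with its non-nil ordering $\bar\alpha$ (Lemma~\ref{Nil-product}), and then conclude with Lemma~\ref{trace-csa-no-pc} via the identification of the pairing with $\vf_{b,c}$ from Lemma~\ref{exb*c}. Your extra remarks on the uniformity of $\delta$ and on the connectedness of $Z(A(\alpha))$ are accurate bookkeeping points that the paper handles implicitly.
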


\begin{proof}
  The forms $\qf{b}_\s$ and $\qf{c}_\s$ are nonsingular since $b$ and $c$ are
  invertible, and thus $\qf{b}_\s * \qf{c}_\s$ is nonsingular by 
  Corollary~\ref{lem:preserves}.
  By Lemma~\ref{qf-PSD-no-pc}, it suffices to show that the form $(\delta
  \qf{b}_\s * \qf{c}_\s) \ox \kappa(\alpha)$ is PSD with respect to $\bar \alpha$.
  Using now that $(\qf{b}_\s * \qf{c}_\s) \ox \kappa(\alpha) \simeq \qf{b \ox 1}_{\s \ox
  \id} * \qf{c \ox 1}_{\s \ox \id}$ by Lemma~\ref{*-ext-scalars}, 
  and that $b \ox 1, c
  \ox 1 \in M^{\CM_{\bar\alpha}}_{\bar \alpha}(A(\alpha), \s(\alpha))$ by
  Corollary~\ref{same-max}, we
  can assume that $(A,\s)$ is a central simple algebra with involution 
  over the field $\kappa(\alpha)$ with ordering
  $\bar\alpha$ that is non-nil by Lemma~\ref{Nil-product}.  
  The result then follows from
  Lemma~\ref{trace-csa-no-pc} since $\qf{b}_\s * \qf{c}_\s$ is isometric to
  the hermitian 
  form  $\vf_{b,c}$, cf. Lemma~\ref{exb*c}.
\end{proof}

\begin{lemma}\label{quadratic-no-pc}
  Let $\alpha \in \Sper R \setminus \Nil[A,\s]$, 
  and let $b, c \in M^\CM_\alpha(A,\s)$. Then  there
  are nonsingular hermitian forms  $\varphi_1$ and 
  $\varphi_2$ over $(S,\iota)$
  that are PSD at $\alpha$ such that $\varphi_1 \ox_S \qf{b}_\s 
  \simeq  \varphi_2 \ox_{S} \qf{c}_\s$.
\end{lemma}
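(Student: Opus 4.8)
The plan is to realise $\varphi_1$ and $\varphi_2$ as scalar multiples of instances of Garrel's pairing $*$, and to use the associativity identity of Proposition~\ref{pairing-ass-2} to move the factor $\qf{b}_\s$ from one side of an isometry to the other. First I would record that $S$ is connected: since $\alpha\notin\Nil[A,\s]$, this is precisely case~(2) of Remark~\ref{S-connected} (and is also part of the conclusion of Proposition~\ref{quadratic0-no-pc}), so the pairing $*\colon\bH(A,\s)\times\bH(A,\s)\to\bH(S,\iota)$ of Section~\ref{sec:trp} is available.

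Next, let $\delta\in\{-1,1\}$ be the sign furnished by Proposition~\ref{quadratic0-no-pc}, chosen so that $\delta(\qf{x}_\s*\qf{y}_\s)$ is nonsingular and PSD at $\alpha$ for all $x,y\in M_\alpha(A,\s)$. Since $b$ and $c$ are units, the forms $\qf{b}_\s$ and $\qf{c}_\s$ are nonsingular, so Proposition~\ref{pairing-ass-2} applied with $h_1=\qf{c}_\s$, $h_2=\qf{b}_\s$ and $h_3=\qf{b}_\s$ gives
\[
  (\qf{c}_\s*\qf{b}_\s)\ox_S\qf{b}_\s \simeq (\qf{b}_\s*\qf{b}_\s)\ox_S\qf{c}_\s.
\]
Scaling both sides by $\delta$ (which commutes with $\ox_S$ and preserves isometries) and setting $\varphi_1:=\delta(\qf{c}_\s*\qf{b}_\s)$ and $\varphi_2:=\delta(\qf{b}_\s*\qf{b}_\s)$, one obtains $\varphi_1\ox_S\qf{b}_\s\simeq\varphi_2\ox_S\qf{c}_\s$. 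Finally, since $b,c\in M_\alpha(A,\s)$, Proposition~\ref{quadratic0-no-pc} applied to the pairs $(c,b)$ and $(b,b)$ shows that $\varphi_1$ and $\varphi_2$ are nonsingular hermitian forms over $(S,\iota)$ that are PSD at $\alpha$, as required.

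I expect no serious obstacle: with Propositions~\ref{pairing-ass-2} and \ref{quadratic0-no-pc} in hand the lemma is essentially bookkeeping. The two points that need care are (i) choosing the middle entry of the associativity identity to be $\qf{b}_\s$ (or equally $\qf{c}_\s$), so that both resulting pairings $\qf{c}_\s*\qf{b}_\s$ and $\qf{b}_\s*\qf{b}_\s$ fall within the scope of Proposition~\ref{quadratic0-no-pc}; and (ii) using the \emph{same} $\delta$ for both pairings, which is legitimate precisely because Proposition~\ref{quadratic0-no-pc} supplies a single sign valid uniformly over all of $M_\alpha(A,\s)$.
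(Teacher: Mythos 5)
Your proposal is correct and is essentially the paper's own argument: both rest on Proposition~\ref{pairing-ass-2} with a middle entry from $M_\alpha(A,\s)$ together with the uniform sign $\delta$ from Proposition~\ref{quadratic0-no-pc}, the only (immaterial) difference being that the paper takes $\varphi_1=\delta(\qf{c}_\s*\qf{c}_\s)$ and $\varphi_2=\delta(\qf{b}_\s*\qf{c}_\s)$ whereas you take $\varphi_1=\delta(\qf{c}_\s*\qf{b}_\s)$ and $\varphi_2=\delta(\qf{b}_\s*\qf{b}_\s)$. Your two points of care (middle entry in $M_\alpha(A,\s)$, single $\delta$ for all pairs) are exactly the right ones and are justified by the statement of Proposition~\ref{quadratic0-no-pc}.
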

\begin{proof}
  Let $\delta \in \{-1,1\}$ be as given by Proposition~\ref{quadratic0-no-pc}.
   Then the hermitian forms
  $\delta (\qf{c}_\s * \qf{c}_\s)$ and $\delta (\qf{b}_\s * \qf{c}_\s)$ over 
  $(S, \iota)$ are PSD with respect to $\alpha$. 
  Furthermore, they are nonsingular since $b$ and $c$ are invertible and by
  Corollary~\ref{lem:preserves}.
  By Theorem~\ref{pairing-ass-2} we have 
  \[(\qf{c}_\s * \qf{c}_\s) \ox_{S} \qf{b}_\s \simeq (\qf{b}_\s * \qf{c}_\s)
  \ox_{S} \qf{c}_\s,\]
  hence
  \[\delta (\qf{c}_\s * \qf{c}_\s) \ox_S \qf{b}_\s \simeq \delta(\qf{b}_\s 
  * \qf{c}_\s) \ox_S \qf{c}_\s,\]
  and we conclude with Proposition~\ref{quadratic0-no-pc}.
\end{proof}

\section{Sylvester's law of inertia and Pfister's local-global principle}

Let $(A,\s)$ be an Azumaya algebra with involution over $R$,  $S=Z(A)$
and $\iota=\s|_S$.
In the case of central simple algebras with involution over fields, a weaker
version of the following result appears in \cite[Theorem~8.9]{A-U-pos}.

\begin{thm}[Sylvester's law of inertia]\label{Sylvester-no-pc}
  Assume that $R$ is semilocal connected and let 
  $\alpha \in \Sper R \setminus \Nil[A,\s]$. 
  Then $S$ is connected and, letting $t := \rk_S A$, we have:
  \begin{enumerate}
    \item Let $h$ be a nonsingular hermitian form over $(A,\s)$.  For every $c
      \in M^\CM_\alpha(A,\s)$ there are $w_1, \ldots, w_t, u_1, \ldots, u_r, v_1,
      \ldots, v_s \in \alpha \cap R^\x$ such that
      \[\qf{w_1, \ldots, w_t} \ox_R h \simeq \qf{u_1, \ldots, u_r} \ox_R
      \qf{c}_\s \perp \qf{-v_1, \ldots, -v_s} \ox_R \qf{c}_\s.\]
  
    \item Assume
    $      \qf{a_1,\ldots,a_r}_\s \perp \qf{-b_1,\ldots, -b_s}_\s 
      \simeq \qf{a'_1,\ldots,a'_p}_\s \perp \qf{-b'_1,\ldots, -b'_q}_\s$
  with $a_1,\ldots, a_r$, $b_1,\ldots, b_s, a'_1,\ldots, a'_p, 
  b'_1,\ldots, b'_q\in M^\CM_\alpha(A,\s)$. Then $r=p$ and $s=q$.
  \end{enumerate}
\end{thm}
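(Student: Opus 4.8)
The plan is to dispatch part~(2) by a one-line signature computation and to prove part~(1) by pushing $h$ through the involution trace pairing of Section~\ref{sec:inv-tr-p}; the reason exactly $t=\rk_S A$ scalars $w_i$ appear will be that $\qf{c}_\s*\qf{c}_\s$, once diagonalized over $(S,\iota)$, is PSD at $\alpha$ and carried by an $S$-module of rank $t$. First, $S$ is connected: this is exactly Remark~\ref{S-connected}(2), since $\alpha\notin\Nil[A,\s]$, so $t:=\rk_S A$ is well defined.

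\emph{Part (2).} By definition of $M_\alpha(A,\s)$ we have $\sign^\CM_\alpha\qf{b}_\s=m_\alpha(A,\s)=:m$ for every $b\in M_\alpha(A,\s)$, and $m\neq 0$: indeed $m=m_{\bar\alpha}(A(\alpha),\s(\alpha))=\deg A(\alpha)$ by Corollary~\ref{same-max}, as $\bar\alpha\notin\Nil[A(\alpha),\s(\alpha)]$. Since $\qf{-b}_\s\simeq\qf{-1}\ox_R\qf{b}_\s$, Proposition~\ref{mult-sign} gives $\sign^\CM_\alpha\qf{-b}_\s=-m$ for $b\in M_\alpha(A,\s)$. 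Applying $\sign^\CM_\alpha$ — which is additive on orthogonal sums and invariant under isometry — to the given isometry yields $(r-s)m=(p-q)m$, hence $r-s=p-q$; comparing the ranks over $A$ of the two sides gives $r+s=p+q$; therefore $r=p$ and $s=q$.

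\emph{Part (1).} Fix $h$ nonsingular over $(A,\s)$ and $c\in M_\alpha(A,\s)$, and let $\delta\in\{-1,1\}$ be as in Proposition~\ref{quadratic0-no-pc}, so $\delta(\qf{c}_\s*\qf{c}_\s)$ is a nonsingular hermitian form over $(S,\iota)$ that is PSD at $\alpha$; by Lemma~\ref{exb*c} it is carried by $A$, hence has $S$-rank $\rk_S A=t$. Since $R$ is semilocal and $S$ semilocal connected, nonsingular hermitian forms over $(S,\iota)$ are diagonalizable — by Lemma~\ref{S-diag} when $\iota\neq\id$, and by \cite[Proposition~3.4]{baeza} when $S=R$ — so $\delta(\qf{c}_\s*\qf{c}_\s)\simeq\qf{w_1,\ldots,w_t}_\iota$ with $w_i\in R^\x$ (nonsingular diagonal forms have invertible entries, and $\Sym(S,\iota)=R$), and PSD-ness forces $w_i\in\alpha$, so $w_1,\ldots,w_t\in\alpha\cap R^\x$. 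I claim these are the required scalars. Apply Proposition~\ref{pairing-ass-2} with $h_1=h$ and $h_2=h_3=\qf{c}_\s$ to obtain $(h*\qf{c}_\s)\ox_S\qf{c}_\s\simeq(\qf{c}_\s*\qf{c}_\s)\ox_S h$; scaling both sides by $\delta$ and invoking the diagonalization above together with Lemma~\ref{R-Z-isom},
\[
\bigl(\delta(h*\qf{c}_\s)\bigr)\ox_S\qf{c}_\s\;\simeq\;\qf{w_1,\ldots,w_t}_\iota\ox_S h\;\simeq\;\qf{w_1,\ldots,w_t}\ox_R h .
\]
By Lemma~\ref{lem:preserves}, $\delta(h*\qf{c}_\s)$ is a nonsingular hermitian form over $(S,\iota)$, hence diagonalizes as $\qf{e_1,\ldots,e_k}_\iota$ with $e_i\in R^\x$; using Lemma~\ref{R-Z-isom} once more,
\[
\qf{w_1,\ldots,w_t}\ox_R h\;\simeq\;\qf{e_1,\ldots,e_k}\ox_R\qf{c}_\s .
\]
Finally, since $\Supp(\alpha)$ is a proper prime ideal, each unit $e_i$ lies either in $\alpha$ or in $-\alpha$; reordering so that $e_1,\ldots,e_r\in\alpha$ and $e_{r+1},\ldots,e_k\in-\alpha$, and setting $u_j:=e_j$ for $1\le j\le r$ and $v_j:=-e_{r+j}\in\alpha\cap R^\x$ for $1\le j\le s:=k-r$, we arrive at
\[
\qf{w_1,\ldots,w_t}\ox_R h\;\simeq\;\qf{u_1,\ldots,u_r}\ox_R\qf{c}_\s\perp\qf{-v_1,\ldots,-v_s}\ox_R\qf{c}_\s ,
\]
as desired.

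The one nonroutine step — and the place I expect to have to be careful — is in part~(1): recognizing that associativity of the pairing (Proposition~\ref{pairing-ass-2}) rewrites $\qf{w_1,\ldots,w_t}\ox_R h$ as $\delta(h*\qf{c}_\s)\ox_S\qf{c}_\s$, which at once produces the correct count $t=\rk_S A$ of scalars (via Lemma~\ref{exb*c}) and reduces the problem to diagonalizing hermitian forms over $(S,\iota)$; the positivity input, namely that $\delta(\qf{c}_\s*\qf{c}_\s)$ is PSD at $\alpha$, is supplied by Proposition~\ref{quadratic0-no-pc}. Everything else — tracking the $\io{}$-twisted module actions, the identification $S\ox_S A\cong A$, the equality $\Sym(S,\iota)=R$, and sorting units by sign relative to $\alpha$ — is routine bookkeeping.
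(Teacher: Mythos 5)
Your proof is correct and follows essentially the same route as the paper's: part (1) rests on Proposition~\ref{pairing-ass-2} in the form $(h*\qf{c}_\s)\ox_S\qf{c}_\s\simeq(\qf{c}_\s*\qf{c}_\s)\ox_S h$, combined with Proposition~\ref{quadratic0-no-pc}, Lemma~\ref{exb*c}, diagonalization over $(S,\iota)$ and Lemma~\ref{R-Z-isom}, and part (2) is the same signature count using $m_\alpha(A,\s)\neq 0$. The only differences are cosmetic: you diagonalize $\delta(h*\qf{c}_\s)$ and sort the unit entries by sign at the end (rather than splitting the diagonalization into positive and negative parts up front, with the paper's ``up to replacing by its opposite'' handled instead by your explicit scaling by $\delta$), and in (2) you apply $\sign^\CM_\alpha$ directly to the isometry via Proposition~\ref{mult-sign} instead of first moving the negative terms across in the Witt group and discarding hyperbolics.
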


\begin{proof}
    We first observe that $S$ is connected by Remark~\ref{S-connected}. By
    Lemma~\ref{S-diag}  every nonsingular hermitian form over
    $(S,\iota)$ is diagonalizable. By Proposition~\ref{prop:Az}, $A$ is Azumaya
    over $S$, hence a projective $S$-module and so $\rk_S A$ is defined, and is
    constant since $S$ is connected, cf. \cite[p.~12]{Saltman99}.

  (1)
  By Theorem~\ref{pairing-ass-2} we have 
  $(\qf{c}_\s * \qf{c}_\s) \ox_S h \simeq
   (h * \qf{c}_\s) \ox_S \qf{c}_\s$.  Since
  $h * \qf{c}_\s$ is nonsingular by Corollary~\ref{lem:preserves} 
  and diagonalizable, we can write
  \[h * \qf{c}_\s \simeq \qf{u_1, \ldots, u_r}_\iota \perp \qf{-v_1, \ldots,
  -v_s}_\iota,\]
  with $u_1, \ldots, u_r, v_1, \ldots, v_s \in \alpha \cap R^\x$. Therefore
  \begin{align*}
    (\qf{c}_\s * \qf{c}_\s) \ox_S h &\simeq (h * \qf{c}_\s) \ox_S
        \qf{c}_\s \\
      &\simeq \qf{u_1, \ldots, u_r}_\iota \ox_S \qf{c}_\s \perp \qf{-v_1,
        \ldots, -v_s}_\iota \ox_S \qf{c}_\s.
  \end{align*}

  By Proposition~\ref{quadratic0-no-pc}, 
  the nonsingular hermitian form $\qf{c}_\s * \qf{c}_\s$ over
  $(S,\iota)$ is PSD or NSD with respect to $\alpha$. Up to replacing it by
  its opposite, we can assume it is PSD with respect to $\alpha$. As observed
  above, it is diagonalizable, 
  and by Lemma~\ref{exb*c} it is defined on $A$ and therefore has 
  dimension $\rk_S A = t$.
  Hence there are $w_1, \ldots, w_t \in \alpha \cap
  R^\x$ such that $\qf{c}_\s * \qf{c}_\s \simeq 
  \qf{w_1, \ldots, w_t}_\iota$, and
  thus
  \[\qf{w_1, \ldots, w_t}_\iota \ox_S h \simeq \qf{u_1, \ldots, u_r}_\iota
  \ox_S \qf{c}_\s \perp \qf{-v_1, \ldots, -v_s}_\iota \ox_S \qf{c}_\s.\]
  The result now follows from Lemma~\ref{R-Z-isom}.
  
  (2) For dimension reasons (after localization, since $A$ is a projective 
  $R$-module)
  we have $r+s = p+q$. The result will follow if we show
  $r-s = p-q$, i.e., $r+q = p+s$.  We have
  the following equality in the Witt group $W(A,\s)$:
  \[\qf{a_1,\ldots,a_r,b'_1,\ldots,b'_q}_\s = \qf{a'_1,\ldots, a'_p, b_1,
  \ldots, b_s}_\s,\]
  which implies that
  \[\qf{a_1,\ldots,a_r,b'_1,\ldots,b'_q}_\s \perp \vf  \simeq 
  \qf{a'_1,\ldots, a'_p, b_1,
  \ldots, b_s}_\s \perp \psi, \]
  where $\vf$ and $\psi$ are hyperbolic forms over $(A,\s)$. Taking signatures
  on both sides yields $(r+q)m_\alpha(A,\s) = (p+s)m_\alpha(A,\s)$
  since hyperbolic forms have signature zero. The result follows. 
\end{proof}

For $r_1,\ldots, r_\ell  \in R$, we use the notation 
$\Pf{r_1, \ldots, r_{\ell}}$ to denote the
Pfister form $\qf{1,r_1} \ox_R \cdots \ox_R \qf{1,r_\ell}$.

\begin{cor}\label{prod-hyperbolic-no-pc2}
  Assume that $R$ is semilocal connected. 

  \begin{enumerate}
    \item Assume that $\s$ is of orthogonal or symplectic type (so that
      $(S,\iota)=(R,\id)$).  
    Let $\alpha \in \Nil[A,\s]$
      and let $h$ be a nonsingular hermitian form over $(A,\s)$. Then there is a
      nonsingular quadratic form $q$ over $R$ of dimension $\rk_R A$ 
      that is PSD at
      $\alpha$ and such that $q \ox_R h$ is hyperbolic.

    \item Let $\alpha \in \Sper R \setminus \Nil[A,\s]$,  let $a,b \in
      M^\CM_\alpha(A,\s)$, and let $t:=\rk_S A$. Then there are 
      $\ell \in \N$ and $r_1, \ldots, r_{\ell},
      w_1, \ldots, w_t \in \alpha \cap R^\x$ such that 
      \[ (\qf{w_1, \ldots, w_t} \ox_R \Pf{r_1, \ldots, r_{\ell}}) \ox_R  \qf{a,-b}_\s\]
      is hyperbolic. 
  \end{enumerate}
\end{cor}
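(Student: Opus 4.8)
The plan is to treat the two parts separately; both rest on the involution trace pairing $*$, on Sylvester's law (Theorem~\ref{Sylvester-no-pc}), and on Witt/Pfister theory of \emph{quadratic} forms over the semilocal ring $R$.

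\textbf{Part (1).} First I would pass to a complementary involution. Since $\alpha\in\Nil[A,\s]\neq\emptyset$ and $\s$ is orthogonal or symplectic, Lemma~\ref{skew-inv-no-pc} gives a unit $u\in\Skew(A^\x,\s)$; put $\tau:=\Int(u)\circ\s$. Then $\tau$ is an $R$-linear involution on $A$ of the kind opposite to $\s$, $(A,\tau)$ is again an Azumaya algebra with involution over $R$ with $Z(A)=R$, and $\alpha\notin\Nil[A,\tau]$ (passing to $\Supp(\alpha)$ via Lemma~\ref{Nil-product} and using the complementary relation of Remark~\ref{rem:nil}). By hermitian Morita theory (Theorem~\ref{thm:HME} with $B=A$, so $\delta=-1$) there is an equivalence $\Herm(A,\s)\simeq\Herm^{-1}(A,\tau)$ which is $R$-linear, being the inner twist by $u$ and hence commuting with $\ox_R(-)$, and which preserves hyperbolicity; let $h'\in\bH^{-1}(A,\tau)$ be the image of $h$. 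I would then choose $c\in M_\alpha(A,\tau)$ and set $q:=\delta_0(\qf{c}_\tau*\qf{c}_\tau)$, where $\delta_0\in\{-1,1\}$ is chosen by Proposition~\ref{quadratic0-no-pc} (applied to $(A,\tau)$, legitimate since $\alpha\notin\Nil[A,\tau]$) so that $q$ is PSD at $\alpha$; by Lemma~\ref{exb*c}, $q$ is a nonsingular quadratic form over $R$ defined on $A$, hence of dimension $\rk_S A$. Finally, Proposition~\ref{pairing-ass-2} applied to $(A,\tau)$ with $(h_1,h_2,h_3)=(h',\qf{c}_\tau,\qf{c}_\tau)$ gives $q\ox_R h'\simeq\delta_0\bigl((h'*\qf{c}_\tau)\ox_R\qf{c}_\tau\bigr)$; but $h'*\qf{c}_\tau\in\bH^{(-1)(+1)}(S,\iota)=\bH^{-1}(R,\id)$ is a nonsingular alternating bilinear form over $R$, hence hyperbolic as $2\in R^\x$, so $q\ox_R h'$ is hyperbolic, and transporting back through the Morita equivalence shows $q\ox_R h$ is hyperbolic.

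\textbf{Part (2), reduction via Sylvester.} I would apply Theorem~\ref{Sylvester-no-pc}(1) to the nonsingular hermitian form $h=\qf{a}_\s$ and to $c=b\in M_\alpha(A,\s)$, obtaining $w_1,\dots,w_t,u_1,\dots,u_r,v_1,\dots,v_s\in\alpha\cap R^\x$ with $\qf{w_1,\dots,w_t}\ox_R\qf{a}_\s\simeq\qf{u_1,\dots,u_r}\ox_R\qf{b}_\s\perp\qf{-v_1,\dots,-v_s}\ox_R\qf{b}_\s$. Comparing $R$-ranks (using that $A$ has constant $R$-rank, $R$ being connected) yields $r+s=t$, and comparing $\CM$-signatures at $\alpha$ through Proposition~\ref{mult-sign} yields $t\,m_\alpha(A,\s)=(r-s)\,m_\alpha(A,\s)$; since $\alpha\notin\Nil[A,\s]$ we have $m_\alpha(A,\s)\neq0$, so $r-s=t$, whence $s=0$ and $r=t$. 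Thus $W\ox_R\qf{a}_\s\simeq U\ox_R\qf{b}_\s$ with $W:=\qf{w_1,\dots,w_t}$, $U:=\qf{u_1,\dots,u_t}$, all slots in $\alpha\cap R^\x$. Now $W\perp(-U)$ is nonsingular of signature $0$ at $\alpha$; diagonalizing it over the semilocal ring $R$ and pairing its $\alpha$-positive slots $p_1,\dots,p_N$ with its $\alpha$-negative slots $-q_1,\dots,-q_N$ (so $q_i\in\alpha\cap R^\x$) gives $W\perp(-U)\simeq\bigperp_{i=1}^N\langle p_i,-q_i\rangle$. For the Pfister form $\pi:=\Pf{p_1q_1,\dots,p_Nq_N}$ (all slots in $\alpha\cap R^\x$), a direct computation — using $\langle p_i,-q_i\rangle\simeq p_i\langle1,-p_iq_i\rangle$ — shows $\Pf{p_iq_i}\ox_R\langle p_i,-q_i\rangle$ is hyperbolic for each $i$, hence $\pi\ox_R\bigl(W\perp(-U)\bigr)$ is hyperbolic, and therefore $\pi\ox_R W\simeq\pi\ox_R U$ by Witt cancellation over $R$. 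Setting $q:=\pi\ox_R W=\qf{w_1,\dots,w_t}\ox_R\Pf{p_1q_1,\dots,p_Nq_N}$, which has the required shape, I get $q\ox_R\qf{a}_\s=\pi\ox_R(W\ox_R\qf{a}_\s)\simeq\pi\ox_R(U\ox_R\qf{b}_\s)=(\pi\ox_R U)\ox_R\qf{b}_\s\simeq q\ox_R\qf{b}_\s$, so $q\ox_R\qf{a,-b}_\s=(q\ox_R\qf{a}_\s)\perp\bigl(-(q\ox_R\qf{b}_\s)\bigr)$ is hyperbolic.

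\textbf{Expected obstacles.} The hard parts are, for~(1), the bookkeeping around the complementary involution $\tau$ — verifying that it has the kind opposite to $\s$ at every prime, that $\alpha\notin\Nil[A,\tau]$, and that the Morita equivalence of Theorem~\ref{thm:HME} is $R$-linear and hyperbolicity-preserving — together with the clean identification $h'*\qf{c}_\tau\in\bH^{-1}(R,\id)$; and, for~(2), correctly extracting $s=0$ from the rank and signature identities and verifying the explicit splitting Pfister form (its precise generators depend on the sign convention for $\Pf{\,\cdot\,}$, but the computation showing $\Pf{p_iq_i}\ox_R\langle p_i,-q_i\rangle$ — or $\Pf{p_i,q_i}\ox_R\langle p_i,-q_i\rangle$ in the other convention — is hyperbolic is routine in either case).
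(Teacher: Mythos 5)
Your part (1) is essentially the paper's own proof: a skew unit from Lemma~\ref{skew-inv-no-pc}, the switch to $\tau=\Int(u)\circ\s$ with $\alpha\notin\Nil[A,\tau]$, the scaled form $h'$, the choice $q=\delta(\qf{c}_\tau*\qf{c}_\tau)$ via Proposition~\ref{quadratic0-no-pc} and Lemma~\ref{exb*c}, Proposition~\ref{pairing-ass-2}, and hyperbolicity of the skew-symmetric form $h'*\qf{c}_\tau$; the only cosmetic difference is that you package the scaling as the Morita equivalence of Theorem~\ref{thm:HME}, where the paper scales by $a$ directly (and note that hyperbolicity of $h'*\qf{c}_\tau$ uses that projective modules over the semilocal connected $R$ are free, via \cite[I, Corollary~4.1.2]{knus91}, not merely $2\in R^\x$).

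Part (2) is correct but takes a genuinely different route. The paper applies Theorem~\ref{Sylvester-no-pc}(1) to the two-dimensional form $\qf{a,-b}_\s$ itself, deduces only $r=s$ from the vanishing signature, and then makes $\qf{u_1,\ldots,u_r,-v_1,\ldots,-v_r}\ox_R\qf{c}_\s$ hyperbolic by tensoring with $\Pf{u_1,\ldots,u_r,v_1,\ldots,v_r}$, using roundness of Pfister forms over semilocal rings \cite[Corollary~2.16]{baeza}; no cancellation and no determination of $r,s$ individually is needed, and $\ell=r+s$ there. You instead apply Sylvester to $\qf{a}_\s$ with $c=b$, pin down $s=0$ and $r=t$ by a rank count plus a signature count, and annihilate $W\perp(-U)=\bigperp_{i=1}^t\qf{w_i,-u_i}$ with the explicit Pfister form $\Pf{w_1u_1,\ldots,w_tu_t}$, finishing with Witt cancellation over $R$ to get $\pi\ox_R W\simeq\pi\ox_R U$. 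This works: cancellation for nonsingular forms over semilocal rings with $2\in R^\x$ is available (Baeza), your binary computation is fine, and in fact cancellation could even be bypassed by keeping $\qf{b}_\s$ tensored throughout, since $\pi\ox_R(U\perp -W)\ox_R\qf{b}_\s$ is already hyperbolic. Your route buys the more explicit value $\ell=t$ and avoids roundness; the paper's avoids cancellation and the extra bookkeeping. One shared caveat: your step $r-s=t$, exactly like the paper's $r=s$, silently uses $m_\alpha(A,\s)\neq 0$ for $\alpha\notin\Nil[A,\s]$, so this is not a gap relative to the paper's argument.
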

\begin{proof}
  (1)
  By Lemma~\ref{skew-inv-no-pc} there exists a skew-symmetric element 
  $a\in A^\x$. Let $\tau=\Int(a)\circ \s$ and note that 
  $\tau$ is orthogonal if $\s$ is symplectic and vice versa (indeed,
  by Proposition~\ref{rem:quad-et} it suffices to check it for the
  central simple $\kappa(\alpha)$-algebra with involution 
  $(A(\alpha), \s(\alpha))$, where it is true by \cite[Proposition~2.7]{BOI}). 
  It follows from Remark~\ref{rem:nil} and
  Lemma~\ref{Nil-product} (1)$\Leftrightarrow$(2) 
  that $\alpha\in \Sper(R) \setminus \Nil[A,\tau]$. Scaling $h$ by $a$
  gives the nonsingular skew-hermitian form $h':=ah$ over $(A,\tau)$. 
  
  Let $c \in M^\CM_\alpha(A,\tau)$.  Then the form $\qf{c}_\tau$ is 
  nonsingular, and
  so the pairing $\qf{c}_\tau * \qf{c}_\tau$ is also nonsingular by
  Corollary~\ref{lem:preserves}.  By Proposition~\ref{quadratic0-no-pc}, there
  exists $\delta\in\{-1,1\}$ such that the form $\delta (\qf{c}_\tau *
  \qf{c}_\tau)$  is PSD at $\alpha$.  
  
  We now consider the form $\delta(\qf{c}_\tau * \qf{c}_\tau) \ox_R h'$ which is
  isometric to $\delta (h' * \qf{c}_\tau) \ox_R \qf{c}_\tau$ by
  Theorem~\ref{pairing-ass-2}.  But $h' * \qf{c}_\tau$ is a pairing of a
  skew-hermitian and a hermitian form over $(A,\tau)$, and so is skew-symmetric
  over $(S,\iota)=(R,\id)$ by \eqref{eq:ns}.   
  Hence, $h' * \qf{c}_\tau$ is
  hyperbolic by \cite[I, Corollary~4.1.2]{knus91} since every projective
  $R$-module is free by \cite{hinohara}. 
  
  Therefore, letting $q:= \delta (\qf{c}_\tau * \qf{c}_\tau)$, the form $q\ox_R
  h'$ is hyperbolic and since scaling by $a^{-1}$ commutes with tensoring by
  $q$, we obtain that $q\ox_R h$ is hyperbolic by \cite[I,
  Theorem~9.3.5]{knus91} applied to the scaling-by-$a^{-1}$ Morita equivalence.
  Note that $q$ is diagonalizable since $R$ is semilocal and $A$ is projective
  over $R$, and that the dimension of $q$ is $\rk_R A$, cf. Lemma~\ref{exb*c}.

  (2) Observe that $\rk_S A $ is constant by the first paragraph of the
  proof of  Theorem~\ref{Sylvester-no-pc}.
  Let $c \in M^\CM_\alpha(A,\s)$. By Theorem~\ref{Sylvester-no-pc}, there are 
  $w_1, \ldots, w_t, u_1, \ldots, u_r, v_1, \ldots, v_s \in \alpha
  \cap R^\x$ such that
  \[\qf{w_1, \ldots, w_t} \ox_R \qf{a,-b}_\s \simeq \qf{u_1, \ldots, u_r} \ox_R
  \qf{c}_\s \perp \qf{-v_1, \ldots, -v_s} \ox_R \qf{c}_\s.\]
  The signature at $\alpha$ of the left-hand side is $0$, so we must have
  $r=s$. Therefore,
  \begin{align*}
    \qf{w_1, \ldots, w_t} \ox_R \qf{a,-b}_\s &\simeq \qf{u_1, \ldots, u_r} \ox_R \qf{c}_\s \perp
      \qf{-v_1, \ldots, -v_r} \ox_R \qf{c}_\s \\
    &\simeq \qf{\bar u} \ox_R \qf{c}_\s \perp \qf{-\bar v} \ox_R \qf{c}_\s 
    \simeq \qf{\bar u, -\bar v} \ox_R \qf{c}_\s,
  \end{align*}
  where we write $\bar u$ for $u_1, \ldots, u_r$ and similarly for 
  $\bar v$. We use the notation $\Pf{\bar u, \bar v}$ for
  the Pfister form $\Pf{u_1, \ldots, u_r, v_1, \ldots, v_s}$. 
  The form $\Pf{\bar u, \bar v} \ox_R \qf{\bar u, -\bar v}$ is hyperbolic since
  \begin{align*}
    \Pf{\bar u, \bar v} \ox_R \qf{\bar u, -\bar v} &\simeq 
    \Pf{\bar u, \bar v} \ox_R \qf{\bar
      u} \perp \Pf{\bar u, \bar v} \ox_R \qf{-\bar v} \\
      &\simeq  \bigperp_{i=1}^r \underbrace{\Pf{\bar u, \bar v} \ox_R 
      \qf{u_i}}_{\Pf{\bar u, \bar v}} \perp \bigperp_{i=1}^r -
  \underbrace{\Pf{\bar u, \bar v} \ox_R \qf{v_i}}_{\Pf{\bar u, \bar v}} \\
      &\simeq r \x \Pf{\bar u, \bar v} \perp -r \x \Pf{\bar u, \bar v},
  \end{align*}
  where the final equality holds since Pfister forms over $R$ are round (cf.
  \cite[Corollary~2.16]{baeza} which uses the fact that the hypothesis $2 \in
  R^\x$ ensures that $|R/\mathfrak{m}| > 2$ 
  for every maximal ideal $\mathfrak{m}$ of $R$). Therefore
  $(\qf{w_1, \ldots, w_t} \ox_R \Pf{\bar u, \bar v}) \ox_R \qf{a, -b}_\s$ is
  hyperbolic, proving the result. 
\end{proof}

The main idea of the
proof of the next result comes from Marshall's proof of Pfister's
local-global principle in \cite[Theorem~4.12]{marshall80}.

\begin{prop}\label{PLG-V1-no-pc}
  Let $R$ be semilocal connected and let $t_0 := \rk_R A$.  Let $h$ be a
  nonsingular hermitian form over $(A,\s)$ such that for every $n \in \N$,
  the form $2^n t_0^2 \x h$ is not
  hyperbolic. Then there is $\alpha \in \Sperm R$ such that
  $\sign^\CM_\alpha h \not = 0$.
\end{prop}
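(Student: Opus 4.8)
The plan is to argue by contrapositive: assuming $\sign^\CM_\alpha h = 0$ for every $\alpha \in \Sperm R$, I will show that $2^n t_0^2 \times h$ is hyperbolic for some $n$. The guiding principle, following Marshall, is that a form whose signature vanishes at every (maximal) ordering should be ``torsion'' up to the obstruction measured by the reference pairing, and here the pairing $*$ of Section~\ref{sec:inv-tr-p} together with Sylvester's law of inertia (Theorem~\ref{Sylvester-no-pc}) and the PSD-quadratic-form machinery of Section~\ref{link-with-psd} provides exactly the right bridge to reduce everything to Pfister's local-global principle for \emph{quadratic} forms over the semilocal ring $R$ (for which \cite{baeza} applies).

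\textbf{Key steps.} First I would fix $\alpha \in \Sperm R$. If $\alpha \in \Nil[A,\s]$, then when $\s$ is orthogonal or symplectic, Corollary~\ref{prod-hyperbolic-no-pc2}(1) gives a nonsingular PSD quadratic form $q_\alpha$ over $R$ of dimension $\rk_S A$ with $q_\alpha \ox_R h$ hyperbolic; when $\s$ is unitary, Corollary~\ref{nil-open-second-kind} shows $\Nil[A,\s]$ is the open set $\mathring H(d)$, and one handles this case separately (here $\sign^\CM_\alpha h = 0$ is automatic on a clopen piece and one again wants to produce a local hyperbolicity witness). If $\alpha \notin \Nil[A,\s]$, then since $\sign^\CM_\alpha h = 0$ I would diagonalize, pick $c \in M_\alpha(A,\s)$, apply Theorem~\ref{Sylvester-no-pc}(1) to write $\qf{w_1,\dots,w_t} \ox_R h \simeq \qf{u_1,\dots,u_r}\ox_R\qf{c}_\s \perp \qf{-v_1,\dots,-v_s}\ox_R\qf{c}_\s$, deduce $r = s$ from the vanishing signature, and then invoke Corollary~\ref{prod-hyperbolic-no-pc2}(2) to obtain a local datum: a Pfister form $\Pf{\bar u,\bar v}$ and $w_i \in \alpha \cap R^\x$ so that $(\qf{w_1,\dots,w_t}\ox_R\Pf{r_1,\dots,r_\ell})\ox_R h$ is hyperbolic. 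Second, I would package each such witness as a basic open neighbourhood $H(\cdot)$ of $\alpha$ in $\Sign R \cong \Sperm R$ on which a \emph{fixed} multiple of $h$ (absorbing the $\qf{w_i}$'s and the Pfister form, both of which are positive on that neighbourhood) becomes hyperbolic; this uses roundness of Pfister forms over $R$ (\cite[Corollary~2.16]{baeza}) to keep dimensions under control. Third, by compactness of $\Sperm R$ (it is homeomorphic to the space of signatures, which is boolean) I would extract a finite subcover $\Sperm R = \bigcup_{j=1}^N H(\bar v^{(j)})$. Fourth — the Marshall step — I would combine the finitely many local witnesses: iterating the identity that over a clopen cover one can glue hyperbolicity statements, multiplying by suitable Pfister forms $\Pf{\cdots}$ and powers of $2$, one shows that $2^m t_0^2 \times h$ is hyperbolic for $m$ depending only on $N$ and the dimensions involved (the $t_0^2$ factor coming from the rank of $A$ as an $R$-module, entering when one passes between the $*$-pairing output over $(S,\iota)$ and forms over $R$ via Lemma~\ref{R-Z-isom} and the dimension count $\dim(\qf{c}_\s * \qf{c}_\s) = \rk_S A$).

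\textbf{Main obstacle.} The hard part will be the combinatorial gluing in the fourth step: turning finitely many \emph{local} hyperbolicity statements of the form ``$q_j \ox_R h$ is hyperbolic on the clopen piece $H(\bar v^{(j)})$'' into a \emph{single global} statement ``$2^m t_0^2 \times h$ is hyperbolic'', with explicit control of the multiplier $2^m t_0^2$. This is exactly where Marshall's argument \cite[Theorem~4.12]{marshall80} is delicate: one must choose the $q_j$ uniformly (same underlying dimension $\rk_S A$, achieved via the $*$-pairing), use that the characteristic functions of a clopen partition of $\Sperm R$ are realized by Pfister forms, and then telescope the identities in the Witt group $W(A,\s)$ while tracking that each recombination costs at most a factor of $2$ and the rank-related factor $t_0^2$. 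I expect the nil case for unitary involutions (using Corollary~\ref{nil-open-second-kind}) and the bookkeeping of the exponent $n$ to require the most care; everything else is assembled from the lemmas already proved in the excerpt.
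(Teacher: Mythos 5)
Your proposal takes a genuinely different route from the paper. The paper argues directly, \`a la Marshall: assuming no $2^nt_0^2\x h$ is hyperbolic, it builds (by Zorn's lemma) a maximal subset $\CS\subseteq W(R)$ containing all $2^n\x\qf{1}$, closed under products, with $(t_0^2\x p)\cdot h\not=0$ for all $p\in\CS$; from $\CS$ it extracts a signature $\alpha\in\Sperm R$, shows (Claim~1, via $\Tr_{S/R}$, the quadratic local-global principle over $R$ and \cite[Corollary~8.3]{first22}) that $\alpha$ cannot be nil when $\s$ is unitary, and then proves $\ker\sign^\CM_\alpha\subseteq J_\CS$ using Theorem~\ref{Sylvester-no-pc} and Corollary~\ref{prod-hyperbolic-no-pc2}, so that $\sign^\CM_\alpha h\not=0$. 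No compactness of $\Sperm R$ and no covering argument are used. Your contrapositive plan (local hyperbolicity witnesses, compactness of $\Sperm R$, gluing via Pfister forms and the quadratic local-global principle over $R$) is in principle completable, but as written it has two genuine gaps.

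First, the unitary nil case: you say it is ``handled separately'' but produce no witness, and none of the results you quote supplies one (the paper's treatment of nil orderings in the unitary case is not a local statement about $h$; it shows instead that its constructed ordering is non-nil). What one actually needs here is an extra observation, e.g.\ that with $\lambda\in S$, $\iota(\lambda)=-\lambda$, $d=\lambda^2\in R^\x$, scaling by the central unit $\lambda$ gives $\qf{-d}\ox h\simeq h$, hence $\qf{1,d}\ox h\simeq h\perp(-h)$ is hyperbolic, and $\qf{1,d}$ is positive definite exactly on $\Nil[A,\s]=\mathring H(d)$ (Corollary~\ref{nil-open-second-kind}). Second, and more seriously, the gluing step --- which you yourself identify as the main obstacle --- is not carried out, and the assertion you lean on, that ``characteristic functions of a clopen partition of $\Sperm R$ are realized by Pfister forms'', is false as stated: only $2$-power multiples of characteristic functions of \emph{basic} sets $\mathring H(u_1,\ldots,u_k)$ are signatures of Pfister forms; general clopen pieces require $\Z$-linear combinations of products of such forms (inclusion--exclusion), and, crucially, turning the resulting equality of signature \emph{functions} into an identity in $W(R)$ requires a further application of Pfister's local-global principle for quadratic forms over the semilocal ring $R$, after which one must still absorb the witnesses into the $\rho_j$'s by roundness and check that the final multiplier can be taken of the exact shape $2^nt_0^2$. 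This chain of arguments can be written down and does work, but it is the entire content of the proposition; since it is only gestured at (``glue hyperbolicity statements'', ``telescope the identities''), the proposal as it stands does not constitute a proof.
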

\begin{proof}
  Observe that if $t=\rk_S A$ is defined, then $t_0$ is equal to $t$ (if $R=S$)
  or $2t$ (if $R \not = S$). We identify nonsingular
  hermitian forms with their classes in
  the Witt group $W(A,\s)$, considered as a $W(R)$-module. Note that a 
  nonsingular hermitian
  form is hyperbolic if and only if its class is zero since we assume that $2\in
  A^\x$ (and therefore metabolic forms are hyperbolic).

  For the convenience of the reader we first present 
  the ideas of the
  three main steps of the proof before giving the full details:
  
  \begin{enumerate} 
    \item We define a maximal non-empty set $\CS$ of (nonsingular) Pfister
    forms over $R$ such that $p \cdot h \not = 0$ for every $p \in \CS$, and
    such that $\CS$ is closed under products. This final property produces an
    ideal $J := \bigcup_{p \in \CS} \Ann_{W(A,\s)}(p)$, and $h \not \in J$ by
    construction (this is linked to the notion of Pfister quotient in
    \cite[Chapter~4.7]{marshall80}).

    The actual construction in the proof below is slighly different (a factor
    $t_0^2$ appears for technical reasons), and the ideal is called $J_\CS$.

    \item The maximality of $\CS$ ensures that the set $\alpha_0$ of all
    elements of $R^\x$ represented by these Pfister forms is ``almost'' an
    ordering, more precisely: $\alpha_0 = \alpha \cap R^\x$ for some $\alpha
    \in \Sperm R$.

    \item The final step of the proof consists in checking that
    $\sign^\CM_\alpha \subseteq J$, proving the result since $h \not \in J$.
  \end{enumerate}

  We  now proceed with the proof. 
  Let $\CS \subseteq W(R) $ be a maximal subset of Pfister forms such that:
  \begin{itemize}
    \item[(a)] $\CS \cdot \CS \subseteq \CS$;
    \item[(b)] for every $n \in \N$, $2^n \x \qf{1} \in \CS$;
    \item[(c)] for every $p \in \CS$, $(t_0^2 \x p) \cdot h\not=0$.
  \end{itemize}
  Observe that $\CS$ exists since the set $\{2^n \x \qf{1} \mid n \in \N\}$
  satisfies all these conditions.   
  Define
  \[\alpha_0 := \{r \in R^\x \mid \exists p \in \CS \quad \qf{r}\cdot p = 
  p \}.\]
  Clearly $\alpha_0 \cdot \alpha_0 \subseteq \alpha_0$.
  We first prove the following four properties:
  \begin{enumerate}
    \item[(P1)] $\alpha_0 \cup -\alpha_0 = R^\x$;
    \item[(P2)] $\alpha_0 \cap -\alpha_0 = \emptyset$;
    \item[(P3)] for every $u \in \alpha_0$, $\qf{1, u} \in \CS$;
    \item[(P4)] for every $k \in \N$ and all $u_1, \ldots, u_k \in \alpha_0$,
      $\Pf{u_1, \ldots, u_k} \in \CS$.
  \end{enumerate}
  \medskip
  
   \emph{Proof of \textrm{(P1)}}: 
    For $x \in R^\x$ we define 
    $\CS_x := \CS \cup \qf{1,x}\CS$.
    Obviously $\CS \subseteq \CS_x$. In particular $\CS_x$ satisfies property
    (b). We check the non-obvious case of    property (a): If $p,q \in \CS$,
    then
    $\qf{1,x}p\qf{1,x}q = 2\qf{1,x}pq$,
    which belongs to $\CS_x$ by properties (a) and (b) of $\CS$.
    
    Let $x \in R^\x$. If $x \not \in \alpha_0$ then $\qf{1,x} \not \in \CS$
    (otherwise, since $\qf{x} \qf{1,x} = \qf{1,x}$ we would get $x \in \alpha_0$), so
    $\CS \subsetneq \CS_x$.
  
    So if we assume that $x \not \in \alpha_0$ and $-x \not \in \alpha_0$ we
    obtain $\CS \subsetneq \CS_x$ and $\CS \subsetneq \CS_{-x}$. Since both
    $\CS_x$ and $\CS_{-x}$ satisfy properties (a) and (b), by maximality of
    $\CS$ we get that (c) does not hold for either of $\CS_x$ and $\CS_{-x}$:
    There are $p, q \in \CS$ such that $t_0^2 \x \qf{1,x}ph = 0$ and $t_0^2 \x
    \qf{1,-x}qh = 0$. Therefore $t_0^2 \x \qf{1,x}pqh = 0$ and $t_0^2 \x
    \qf{1,-x}pqh = 0$. Adding both we obtain $t_0^2 \x 2pqh = 0$ in $W(A,\s)$, a
    contradiction since $2pq \in \CS$.  End of the proof of (P1).
\medskip

     \emph{Proof of \textrm{(P2)}}: Assume that there is 
     $x \in \alpha_0 \cap -\alpha_0$. Then
    there are $p,q \in \CS$ such that $\qf{x}p=p$ and $\qf{-x}q=q$. Therefore
    $\qf{x}pq=pq$ and $\qf{-x}pq=pq$. Adding both, we get $0=2pq$ and thus $2pq
    \cdot h = 0$, a contradiction since $2pq \in \CS$. End of the proof of (P2).
\medskip

    \emph{Proof of \textrm{(P3)}}: 
    Let $u \in \alpha_0$ and let $p_0 \in \CS$ be such that $\qf{u}
    \cdot p_0 = p_0$. Consider, as in the proof of (P1), 
    $\CS_u := \CS \cup \qf{1,u}\CS$.
    As seen in the proof of (P1), $\CS_u$ satisfies properties (a) and (b). We
    check property (c): Assume $t_0^2 \x \qf{1, u}ph = 0$ for some $p \in \CS$.
    Then $t_0^2 \x \qf{1, u}p_0ph = 0$, i.e., $t_0^2 \x 2p_0ph = 0$, which is
    impossible since $2p_0p \in \CS$. Since $\CS_u$ satisfies properties (a),
    (b) and (c), and contains $\CS$, we must have $\CS = \CS_u$ by maximality of
    $\CS$. Therefore $\qf{1,u} \in \CS$. End of the proof of (P3).
\medskip

(P4) is a direct consequence of (P3) and property (a). This finishes the proof
of (P1)--(P4). 
\medskip

    Next, we define 
    \[J_\CS := \{\psi \in W(A,\s) \mid \exists p \in \CS \quad (t_0^2 \x p) \cdot
    \psi=0  \}.\]
    Clearly, $h \not \in J_\CS$ by property (c).

  Consider the map
  \[\chi \colon  R^\x \rightarrow \{-1,1\}, \quad 
    \chi(r) := \begin{cases}
      1 & \text{if } r \in \alpha_0\\
      -1 & \text{if } r \in -\alpha_0
  \end{cases}.\]
  Then $\chi$ is a signature of $R$. Indeed, it 
  is a character on $R^\x$ since $\alpha_0 \cdot \alpha_0\subseteq \alpha_0$ 
  and by properties (P1) and (P2). Furthermore, by \cite[top of p.~88]{knebusch84}, 
  $\chi$ will then be a signature of $R$ if it satisfies $\chi(-1) = -1$ 
  (which is true),
  as well as $\chi(r_1^2 f_1 + \cdots + r_k^2f_k) = 1$ whenever $f_1, \ldots, f_k
  \in R^\x$ with $\chi(f_1) = \cdots = \chi(f_k) = 1$ and $r_1, \ldots, r_k \in R$ are
  such that $z:=r_1^2 f_1 + \cdots + r_k^2f_k \in R^\x$. We check this: Since $f_1,
  \ldots, f_k \in \alpha_0$, we have by (P4) that the Pfister form $\Pf{f_1,
  \ldots, f_k}$ is in $\CS$. Clearly, $z \in D_R \Pf{f_1, \ldots, f_k}$ and by
  \cite[p.~94, Theorem~2.1]{baeza} (which applies since the hypothesis $2 \in
  R^\x$ ensures that every quotient of $R$ by a proper ideal has at least 3
  elements) we have $\qf{z} \Pf{f_1, \ldots, f_k} = \Pf{f_1, \ldots, f_k}$. Thus $z
  \in \alpha_0$ and $\chi(z)=1$.

  By the correspondence between $\Sign R$
  and $\Sperm R$ (cf. Section~\ref{sec:ord-sig})
  there is $\alpha \in \Sperm R$ such that $\chi =
  \sign_\alpha$, and thus $\alpha_0 = \alpha \cap R^\x$.
  \medskip

  \emph{Claim 1}: If $\s$ is of unitary type, then $\alpha \not \in \Nil[A,\s]$.

  Proof of Claim 1: Observe first that for every $q \in \CS$, $q \ox_R
  \qf{1}_\iota$ is not hyperbolic (if it were, then $(q \ox_R \qf{1}_\iota)
  \ox_S h$ would be hyperbolic, and thus, by Lemma~\ref{R-Z-isom}, $(q \ox_R
  \qf{1}_\iota) \ox_S h \simeq q \ox_R h$ would also be hyperbolic, contradicting the
  definition of $\CS$).

  Assume that $\alpha \in \Nil[A,\s] = \Nil[S,\iota] = \mathring H(d)$ for some
  $d \in R^\x$ (by
  Lemma~\ref{Nil-product} and Corollary~\ref{nil-open-second-kind}). Then $d \in
  \alpha \cap R^\x = \alpha_0$ and $\sign^\CM \qf{1,d} \ox_R \qf{1}_\iota = 0$
  on $\Sper R$. 
  (Indeed, the signature is zero on $\Nil[A,\s]$ by definition, and 
  the signature of $\qf{1,d}$ is zero on $\Sper R \setminus \Nil[A,\s]$ by 
  definition of $d$ and since $d\not\in \Supp \alpha$.)
  Note that $\qf{1,d} \in \CS$ by (P3).  
  Since $R$ is connected, $S$ is quadratic \'etale over $R$ and
  $\iota$ is the standard involution on $S$ by Proposition~\ref{rem:quad-et},
  and we may use Lemma~\ref{trace-sign-zero}.
  Therefore, $\sign \Tr_{S/R} (\qf{1,d} \ox_R \qf{1}_\iota) =
  0$ on $\Sper R$. Since $\Tr_{S/R} (\qf{1,d} \ox_R \qf{1}_\iota)$ is
  nonsingular (cf. Section~\ref{sec:quadalg}),  Pfister's local-global 
  principle for semilocal 
  rings (cf. \cite[p.~194]{mahe82} or \cite[Theorem~7.16]{baeza}) applies and
  there is $k \in \N$ such that $2^k \times \Tr_{S/R}(\qf{1,d} \ox_R
  \qf{1}_\iota)$ is hyperbolic. Applying \cite[Corollary~8.3]{first23} 
  yields that $2^k \times \qf{1,d} \ox_R \qf{1}_\iota$ is hyperbolic, a
  contradiction (as observed above) since $2^k \x \qf{1,d}$ is in $\CS$.  End of
  the proof of Claim 1.
  \medskip

  \emph{Claim 2}: $\ker \sign^\CM_\alpha\subseteq J_\CS$.

  Proof of Claim 2:  Let $\psi \in \ker \sign^\CM_\alpha$.
  We consider two cases.

  \begin{enumerate}

  \item $\alpha \in \Nil[A,\s]$. By Claim 1 we know that $\s$ must be of
  orthogonal or symplectic type. In particular $S=R$ and $t_0=t$.  By
  Corollary~\ref{prod-hyperbolic-no-pc2}(1), there is a nonsingular diagonal
  quadratic form $\qf{u_1, \ldots, u_{t_0}}$ of dimension $t_0$ with
  coefficients in $\alpha$ (and thus in $\alpha \cap R^\x = \alpha_0$) such that
  $\qf{u_1, \ldots, u_{t_0}} \psi$ is hyperbolic. Multiplying by $\Pf{u_1,
  \ldots, u_{t_0}}$ and using that $\qf{u_i} \Pf{u_1, \ldots, u_{t_0}} =
  \Pf{u_1, \ldots, u_{t_0}}$ (cf.
  \cite[Corollary~2.16]{baeza} which uses the fact that the hypothesis $2 \in
  R^\x$ ensures that $|R/\mathfrak{m}| > 2$ 
  for every maximal ideal $\mathfrak{m}$ of $R$), we obtain 
  $t_0 \x \Pf{u_1, \ldots, u_{t_0}} \psi = 0$.
  By property (P4) the form $\Pf{u_1, \ldots, u_{t_0}}$ is in $\CS$ and thus
  $\psi \in {J_\CS}$.

  \item
  $\alpha \not \in \Nil[A,\s]$. By Theorem~\ref{Sylvester-no-pc}, 
  $t=\rk_S A$ exists and
  we have
  \[\qf{w_1, \ldots, w_t} \psi \simeq \qf{u_1, \ldots, u_r}\qf{c}_\s \perp
  \qf{-v_1, \ldots, -v_s}\qf{c}_\s,\]
  for some $w_1, \ldots, w_t, u_1, \ldots, u_r, v_1, \ldots, v_s \in \alpha \cap
  R^\x$ and $c \in \Sym(A^\x,\s)$ such that $\sign^\CM_\alpha \qf{c}_\s =
  m_\alpha(A,\s)$. Multiplying both sides by $\Pf{w_1, \ldots, w_t}$ and then by
  $2$, the left-hand side becomes first $t \x \Pf{w_1, \ldots, w_t} \psi$ (using
  that $\qf{w_i}\Pf{w_1, \ldots, w_t} = \Pf{w_1, \ldots, w_t}$) and then $t_0 \x
  \Pf{w_1, \ldots, w_t} \psi$, while the right-hand side still retains the same
  shape (up to taking larger values for $r$ and $s$, and different elements
  $u_i, v_j \in \alpha \cap R^\x$). We thus have
  \begin{equation*}
    t_0 \x \Pf{w_1, \ldots, w_t} \psi \simeq \qf{u_1, \ldots, u_{r}}\qf{c}_\s \perp
    \qf{-v_1, \ldots, -v_{s}}\qf{c}_\s,
  \end{equation*}
  where $u_1, \ldots, u_{r}, v_1, \ldots, v_{s} \in \alpha \cap
  R^\x$.

  Since $\sign^\CM_\alpha \psi = 0$ and $\sign^\CM_\alpha \qf{u_i} \qf{c}_\s =
  \sign^\CM_\alpha \qf{v_j}\qf{c}_\s = m_\alpha(A,\s)$ for $i=1,\ldots,r$ and
  $j=1,\ldots, s$, we must have $r=s$. In particular we
  can pair each $u_ic$ with
  the corresponding $-v_ic$, so that
  \begin{equation}\label{step}
    t_0 \x \Pf{w_1, \ldots, w_t} \psi \simeq \bigperp_{i=1}^r \qf{u_ic,
    -v_ic}_\s.
  \end{equation}

  \emph{Fact}: For each $i=1, \ldots, r$ there is $p_i \in \CS$ such that 
  $t_0 \x p_i \cdot \qf{u_ic,-v_ic}_\s =0$.\\
  Proof of the Fact: By Corollary~\ref{prod-hyperbolic-no-pc2}(2),  
  there are $z_1, \ldots, z_t$, $r_1, \ldots, r_\ell \in \alpha \cap
  R^\x = \alpha_0$ such that $\qf{z_1, \ldots, z_t} \Pf{r_1, \ldots, r_\ell}
  \qf{u_ic,-v_ic}_\s = 0$. Multiplying by the Pfister form $\Pf{z_1, \ldots,
  z_t}$, we obtain
  \[(t \x \Pf{z_1, \ldots, z_t, r_1, \ldots, r_\ell}) \qf{u_ic, -v_ic}_\s = 0,\]
  and thus
  $(t_0 \x \Pf{z_1, \ldots, z_t, r_1, \ldots, r_\ell}) \qf{u_ic, -v_ic}_\s = 0$.
  The Fact follows since the form $\Pf{z_1, \ldots, z_t, r_1, \ldots, r_\ell}$ is in
  $\CS$ by property (P4). End of the proof of the Fact.
  \medskip

  Multiplying \eqref{step} by $t_0 \x p_1 \cdots p_r$ gives
  $t_0^2 \x (p_1 \cdots p_r \Pf{w_1, \ldots, w_t}) \cdot \psi = 0$,
  proving that $\psi$ is in $J_\CS$ since $p_1 \cdots p_r \Pf{w_1, \ldots, w_t}
  \in \CS$ (by properties (P4) and (a)). End of the proof of Claim~2.
  \end{enumerate}

  The conclusion is now clear since $h \not \in J_\CS$.
\end{proof}

\begin{prop}\label{prop:tors}
  Let $R$ be semilocal connected.
  The torsion in $W(A,\s)$ is $2$-primary.
\end{prop}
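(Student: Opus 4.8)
The plan is to combine Proposition~\ref{PLG-V1-no-pc} with Pfister's local--global principle for the semilocal ring $R$ itself, and (when $\s$ is unitary) with \cite[Corollary~8.3]{first22}.

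It suffices to show that every torsion element $[h]\in W(A,\s)$ is annihilated by a power of $2$. So fix a nonsingular hermitian form $h$ over $(A,\s)$ and an integer $N\geq 1$ with $N\times h$ hyperbolic. First I would record that all $\CM$-signatures of $h$ vanish: for $\alpha\in\Sper R$ one has $N\cdot\sign^\CM_\alpha h=\sign^\CM_\alpha(N\times h)=0$ since hyperbolic forms have vanishing $\CM$-signature (as used already in the proof of Theorem~\ref{Sylvester-no-pc}), and $\Z$ is torsion-free. In particular $\sign^\CM_\alpha h=0$ for every $\alpha\in\Sperm R$, so the contrapositive of Proposition~\ref{PLG-V1-no-pc} provides $n\in\N$ with $2^n t_0^2\times h$ hyperbolic, where $t_0:=\rk_R A$. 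Thus $t_0^2\cdot[h]$ is killed by $2^n$, and the remaining point is to remove the odd part of $t_0$, i.e.\ to rule out nonzero torsion of odd order in $W(A,\s)$.

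For the latter I would descend to the centre $(S,\iota)$ using the involution trace form $T_\s\simeq\qf{1}_\s*\qf{1}_\s$ (Lemma~\ref{exb*c} with $b=c=1$) and the associativity of the pairing. The pairing $*$ descends to Witt groups, since it respects orthogonal sums and hyperbolicity (Lemma~\ref{lem:preserves}, cf.\ \cite{garrel-2022}); composing with $\Tr_{S/R}$, which is $R$-linear and preserves hyperbolicity (Section~\ref{sec:quadalg}, and $\Tr_{S/R}=\id$ when $S=R$), one obtains an additive transfer $\mathrm{tr}_*\colon W(A,\s)\to W(R)$, $[g]\mapsto[\Tr_{S/R}(\qf{1}_\s*g)]$. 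If $m\times h$ is hyperbolic with $m$ odd, then $m\cdot\mathrm{tr}_*([h])=0$ in $W(R)$; since the torsion of $W(R)$ is $2$-primary by Pfister's local--global principle for semilocal rings (\cite[Theorem~7.16]{baeza}; see also \cite[p.~194]{mahe82}), we get $\mathrm{tr}_*([h])=0$, i.e.\ $\Tr_{S/R}(\qf{1}_\s*h)$ is hyperbolic, whence $\qf{1}_\s*h$ is hyperbolic by \cite[Corollary~8.3]{first22} when $S$ is quadratic \'etale over $R$ (and trivially when $S=R$). Running the same argument with the factors of $*$ interchanged gives $h*\qf{1}_\s$ hyperbolic too, and then Proposition~\ref{pairing-ass-2}, applied with $(h_1,h_2,h_3)=(h,\qf{1}_\s,\qf{1}_\s)$, yields
\[
(h*\qf{1}_\s)\ox_S\qf{1}_\s\simeq(\qf{1}_\s*\qf{1}_\s)\ox_S h=T_\s\ox_S h,
\]
whose left-hand side is hyperbolic; hence $[T_\s]\cdot[h]=0$ in the $W(S,\iota)$-module $W(A,\s)$.

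At this point $[h]$ is annihilated both by the odd integer $m$ and by $[T_\s]$, and I expect the conclusion $[h]=0$ to follow by iterating the descent --- replacing $h$ by $\qf{c}_\s*h$ for suitable $c\in\Sym(A^\x,\s)$ and using Proposition~\ref{pairing-ass-2} together with Corollary~\ref{prod-hyperbolic-no-pc2} to absorb the factor $T_\s$ --- until the odd-torsion statement for $W(A,\s)$ is reduced to the absence of odd torsion in $W(S,\iota)$, and hence (via $\Tr_{S/R}$) in $W(R)$. The main obstacle is precisely this last reduction: showing that the involution trace transfer is faithful enough on torsion (equivalently, that multiplication by $[T_\s]$ has no odd-order elements in its kernel), so that the spurious factor $t_0^2$ produced by Proposition~\ref{PLG-V1-no-pc} can be traded for a pure power of $2$. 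Everything preceding it is routine bookkeeping with the pairing $*$ and Pfister's principle over $R$.
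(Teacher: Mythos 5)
Your opening steps are fine (vanishing of signatures for a torsion class, and the contrapositive of Proposition~\ref{PLG-V1-no-pc} giving that $2^n t_0^2\x h$ is hyperbolic), but the proof has a genuine gap exactly where you flag it, and that gap is the entire content of the proposition. From $m\,[h]=0$ with $m$ odd you deduce (via your transfer and \cite[Corollary~8.3]{first22}) that $[T_\s]\cdot[h]=0$ in the $W(S,\iota)$-module $W(A,\s)$; but an element annihilated by an odd integer and by the class $[T_\s]$ need not vanish, and nothing in the paper (nor in your sketch) gives the faithfulness of multiplication by $[T_\s]$, or of the trace transfer, on odd torsion. ``Iterating the descent'' does not obviously terminate or gain anything: each iteration only reproduces statements of the same shape, and Corollary~\ref{prod-hyperbolic-no-pc2} produces coefficients in $\alpha\cap R^\x$ at a \emph{fixed} ordering, which is of no help against odd torsion. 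So the reduction from the annihilator $2^nt_0^2$ to a pure power of $2$ is not achieved. The paper closes precisely this point by a different mechanism: odd-degree \'etale descent. By \cite[Theorem~8.7 and Remark~8.8]{BFP} there is a connected finite \'etale $R$-algebra $R_1$ of \emph{odd} rank over which $A$ becomes Brauer equivalent to an Azumaya algebra with involution $(A_1,\s_1)$ of degree $1$ or of $2$-power degree containing a skew-symmetric unit; by \cite[Corollary~7.4]{BFP} the map $W(A,\s)\to W(A\ox_R R_1,\s\ox\id)$ is injective, and after hermitian Morita equivalence (Theorem~\ref{thm:HME}, with the skew unit handling $\delta=-1$) one applies Proposition~\ref{PLG-V1-no-pc} to $(A_1,\s_1)$, where $t_0$ is now $1$ or a power of $2$, so $2^nt_0^2$ genuinely is a power of $2$ (the degree-$1$ case being settled by $\Tr_{S_1/R_1}$, \cite[Corollary~8.3]{first22} and \cite[Chapter V, Theorem~6.6]{baeza}). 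Without an input of this kind your argument cannot conclude.

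A secondary, more repairable issue: your transfer $[g]\mapsto[\Tr_{S/R}(\qf{1}_\s*g)]$ requires the pairing $*$ to send hyperbolic forms to Witt-trivial forms in each slot, hence to descend to Witt groups. Lemma~\ref{lem:preserves} only gives compatibility with orthogonal sums, nonsingularity and isometries, and the results of \cite{garrel-2022} you invoke are proved for central simple algebras over fields, not for Azumaya algebras over semilocal rings; one would have to supply this (e.g.\ by exhibiting a Lagrangian in $(M\ox_S\io(P\oplus P^*))\ox_{A\ox_S\io A}A$), which the paper never needs because its proof of the proposition does not use the pairing at all.
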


\begin{proof}
  By \cite[Theorem~8.7 and Remark~8.8]{BFP} there exists a connected finite
  \'etale $R$-algebra $R_1$ of odd rank, and an Azumaya algebra with involution
  $(A_1,\s_1)$ over $R_1$ such that $A\ox_R R_1$ and $A_1$ are Brauer
  equivalent over $S_1:=S\ox_R R_1$, 
  $\s$ and $\s_1$ are both unitary, or are both non-unitary, 
  and such
  that at least one of the following holds:
  \begin{enumerate}
    \item $Z(A_1)\cong R_1\x R_1 $;
    \item $\deg A_1=1$;
    \item The index and degree of $A_1$ are equal and divide the index of $A$. 
    Moreover, $\deg A_1$ is a power of $2$ and there exist 
    $u,v \in A_1^\x$ such that $u^2 \in R_1^\x$, $\s_1(u)=-u$, $\s_1(v)=-v$
    and $uv=-vu$.
  \end{enumerate}

  By \cite[Corollary~7.4]{BFP}, the canonical map of Witt groups
  \[
    W(A,\s) \to W(A\ox_R R_1, \s\ox \id_{R_1})
  \] 
  is injective and thus it suffices to show that the torsion in
  $W(A\ox_R R_1, \s\ox \id_{R_1})$ is $2$-primary. 
  
  If $S_1$ is not connected, then $W(A\ox_R R_1, \s\ox \id_{R_1})=0$ (cf. 
  Remark~\ref{rem:notcon}) and we can conclude. 
  Thus we may assume that $S_1$ is connected, and in particular that we are
  not in case (1) above.
  By Theorem~\ref{thm:HME}
  we have a Witt group isomorphism
  \[
    W(A\ox_R R_1, \s\ox \id_{R_1}) \cong W^\delta (A_1, \s_1)
  \]
  for some $\delta \in\{-1,1\}$, where we may take $\delta=1$ if $\s$ 
  and $\s_1$ are unitary, observing that 
  $\s$ and $\s\ox \id_{R_1}$ are of the same type. 
  We now examine the remaining relevant cases from \cite[Theorem~8.7]{BFP},
  as listed in (2) and (3) above:
  
  \begin{enumerate}
    \addtocounter{enumi}{1}
    \item $\deg A_1=1$, i.e., $A_1=S_1$: Assume first that $\s_1$ is not 
    unitary. By Proposition~\ref{rem:quad-et} we then have $A_1=S_1=R_1$
    and $\s_1=\id_{S_1}$.  In this case
    $W^{-1}(R_1,\id)=0$ by \cite[I, Corollary~4.1.2]{knus91} (whose hypotheses
    are satisfied
    since $R_1$ is connected and also semilocal by 
    \cite[VI, Proposition~1.1.1]{knus91}), 
    while the torsion in $W(R_1,\id)$
    is $2$-primary by \cite[Chapter V, Theorem~6.6]{baeza}. 
    
    On the other hand, if $\s_1$ is unitary (so that we may take $\delta=1$),
    then $S_1$ is a quadratic \'etale 
    $R_1$-algebra and $\s_1$ is the standard involution. 
    By \cite[Corollary~8.3]{first23},
    the map $h \mapsto \Tr_{S_1/R_1} \circ h$ is an injection from 
    $W(S_1, \s_1)$ into 
    $W(R_1, \id)$, which has $2$-primary torsion as observed above.

    \item $\deg A_1$ is a power of $2$ and there exists $u\in A_1^\x$ such that 
    $\s_1(u)=-u$. In particular, $\rk_{S_1} A_1$ and hence $\rk_{R_1} A_1$ 
    are powers of $2$.  
    We consider two cases:
    
    If $\delta = 1$,  we conclude with 
    Proposition~\ref{PLG-V1-no-pc}:
    If $h$ is torsion in $W(A_1,\s_1)$, then $h$ has zero signature at every
    ordering of $R_1$ and by Proposition~\ref{PLG-V1-no-pc},
    there is $n \in \N$ such that $2^n \x h$ is hyperbolic.

    If $\delta = -1$, we have
      \[
          W^{-1}(A_1,\s_1)\cong W(A_1, \Int(u)\circ \s_1)
        \]
    (by Morita equivalence, more precisely 
    the $\mu$-conjugation equivalence of categories in
    \cite[Section~2.7]{first23}), and we conclude again with 
    Proposition~\ref{PLG-V1-no-pc} applied to
    $(A_1, \Int(u)\circ \s_1)$.\qedhere  
  \end{enumerate}
\end{proof}

\begin{rem}\label{sl-prod-no-pc}
  If $R$ is semilocal with only $k$ maximal ideals, then any expression of $R$
  as a product $R_1 \x \cdots \x R_n$ of rings must be such that $n \le k$. Therefore there is
  such an expression of $R$ as a product $R_1 \x \cdots \x R_t$ that cannot be
  further decomposed as a product and thus where each $R_i$ is connected.
\end{rem}

\begin{thm}[Pfister's local-global principle]\label{PLG-Az}
  Let $R$ be semilocal, and recall that we assume $2 \in R^\x$. Let $(M,h)$ be a nonsingular hermitian
  form over $(A,\s)$. The following statements are equivalent:
  \begin{enumerate}
    \item $\sign^\CM_\alpha h = 0$ for every $\alpha \in \Sper R$.
    \item $\sign^\CM_\alpha h = 0$ for every $\alpha \in \Sperm R$.
    \item There exists $n \in \N\cup\{0\}$ such that $2^n \x h$ is hyperbolic.
  \end{enumerate}
  In particular, the torsion in $W(A,\s)$ is $2$-primary.
\end{thm}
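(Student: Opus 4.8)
The plan is to prove the cycle of implications $(1)\Rightarrow(2)\Rightarrow(3)\Rightarrow(1)$, concentrating essentially all of the work in $(2)\Rightarrow(3)$, and then to deduce the final sentence as a formal consequence. The implication $(1)\Rightarrow(2)$ is immediate from $\Sperm R\subseteq\Sper R$. For $(3)\Rightarrow(1)$, if $2^n\x h$ is hyperbolic then, for every $\alpha\in\Sper R$, the form $(2^n\x h)\ox\kappa(\alpha)$ is hyperbolic over the central simple algebra with involution $(A(\alpha),\s(\alpha))$; since hyperbolic hermitian forms have signature $0$ by the construction of $\sign^{\CM_{\bar\alpha}}_{\bar\alpha}$ (reduction to the Sylvester signature at a real closure, cf.\ \cite[Section~3.2]{A-U-Kneb}), additivity of the signature gives $2^n\sign^\CM_\alpha h=\sign^\CM_\alpha(2^n\x h)=0$, hence $\sign^\CM_\alpha h=0$.

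For $(2)\Rightarrow(3)$, the first step is to reduce to the case that $R$ is connected. By Remark~\ref{sl-prod-no-pc} we may write $R=R_1\x\cdots\x R_m$ with each $R_i$ semilocal and connected. This decomposition is compatible with all the relevant structure: $(A,\s)=\prod_i(A_i,\s_i)$ with $A_i:=A\ox_R R_i$, the form $h$ is an orthogonal sum of nonsingular forms $h_i$ over $(A_i,\s_i)$, one has $W(A,\s)\cong\prod_i W(A_i,\s_i)$ (in particular $h$ is hyperbolic if and only if each $h_i$ is) and $\Sperm R=\coprod_i\Sperm R_i$, and for $\alpha$ in the copy of $\Sperm R_i$ the residue field $\kappa(\alpha)$ only involves the $i$-th factor, so that $A\ox_R\kappa(\alpha)=A_i\ox_{R_i}\kappa(\alpha)$ and therefore $\sign^\CM_\alpha h=\sign^\CM_\alpha h_i$. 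Hence $(2)$ for $h$ amounts to $\sign^\CM_\alpha h_i=0$ for all $\alpha\in\Sperm R_i$ and all $i$; if for each $i$ we can find $n_i\in\N\cup\{0\}$ with $2^{n_i}\x h_i$ hyperbolic, then $n:=\max_i n_i$ works for $h$.

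So assume $R$ is semilocal connected and set $t_0:=\rk_R A$. Taking the contrapositive of Proposition~\ref{PLG-V1-no-pc}: since $\sign^\CM_\alpha h=0$ for every $\alpha\in\Sperm R$, there is some $n\in\N$ for which $2^nt_0^2\x h$ is hyperbolic; in particular $h$ represents a torsion class in $W(A,\s)$. By Proposition~\ref{prop:tors} the torsion subgroup of $W(A,\s)$ is $2$-primary, so $2^m\x h$ is hyperbolic for some $m\in\N\cup\{0\}$, which is $(3)$. The final sentence (for arbitrary semilocal $R$) then follows: if $h$ is torsion in $W(A,\s)$, say $k\x h$ is hyperbolic with $k\geq1$, then $k\sign^\CM_\alpha h=0$ and hence $\sign^\CM_\alpha h=0$ for every $\alpha\in\Sper R$, so $(1)\Rightarrow(3)$ produces an $m$ with $2^m\x h$ hyperbolic.

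The genuine mathematical content has already been established in Proposition~\ref{PLG-V1-no-pc} (the Marshall-style argument) and Proposition~\ref{prop:tors} (reduction along an odd-degree \'etale extension together with Hermitian Morita theory), so I do not expect a serious obstacle in assembling the theorem. The only points needing care are the routine but slightly tedious verifications that the $\CM$-signatures and the Witt group split compatibly over the product decomposition $R=R_1\x\cdots\x R_m$, and the standard facts that hyperbolicity is preserved under scalar extension to $\kappa(\alpha)$ and forces every $\CM$-signature to vanish.
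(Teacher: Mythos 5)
Your proposal is correct and follows essentially the same route as the paper: reduce to connected semilocal factors via the product decomposition of Remark~\ref{sl-prod-no-pc} (checking compatibility of signatures, nonsingularity and hyperbolicity with the splitting), then combine the contrapositive of Proposition~\ref{PLG-V1-no-pc} with Proposition~\ref{prop:tors} to pass from ``$2^n t_0^2 \x h_i$ hyperbolic'' to ``$2^{m}\x h_i$ hyperbolic'', and deduce the $2$-primary torsion statement from the equivalence of (1) and (3). The only cosmetic differences are that you take $n=\max_i n_i$ where the paper takes the sum, and you work with $A\ox_R R_i$ where the paper writes $Ae_i$; both are immaterial.
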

\begin{proof}
  Observe that the final statement clearly follows from the equivalence of (1) 
  and (3), since a torsion form has zero signature at every ordering.

  Clearly (1) implies (2), and (3) implies (1), 
  so we only need to show that (2)
  implies (3).

  Following Remark~\ref{sl-prod-no-pc}, we may assume that $R = R_1 \x \cdots \x R_t$
  with $R_1, \ldots, R_t$ connected semilocal rings.
  Writing $e_1=(1,0,\ldots,0), \ldots, e_t=(0,\ldots,0,1)$ in $R$, we have
  \[
    (A,\s) \cong (Ae_1, \s|_{Ae_1})\x \cdots \x (Ae_t, \s|_{Ae_t}).
  \]
  Furthermore, we
  can
  identify $M$ with $\bigoplus_{i=1}^t M e_i$, and we consider
  $h_i := h|_{Me_i}$ as a hermitian form over $(Ae_i, \s|_{Ae_i})$ for
  $i=1,\ldots, t$.
  A direct verification shows that:
  \begin{itemize}
    \item $h$ is nonsingular if and only if each $h_i$ is nonsingular for $i=1,
      \ldots, t$ (using for instance that $h$ is nonsingular if and only if for
      every maximal ideal $\mathfrak{m}$ of $R$
      the form $h \ox_R R/{\mathfrak{m}}$ is nonsingular, cf.
      \cite[I, Lemma~7.1.3]{knus91}).
    \item  If  $h_i$ is hyperbolic for $i=1, \ldots, t$, then $h$ is hyperbolic.
    Indeed: We can write
    $Me_i = L_i \oplus P_i$ with $h_i(L_i,L_i) = 0$ and $h_i(P_i,P_i) =
    0$ (cf. \cite[Section~2.2]{first23}).
    So $M = \bigoplus_{i=1}^t L_i \oplus P_i \cong (\bigoplus_{i=1}^t L_i)
    \oplus (\bigoplus_{i=1}^t P_i)$. Let $L = \bigoplus_{i=1}^t L_i$ and $P =
    \bigoplus_{i=1}^t P_i$. We check that $h(L,L) = 0$; the proof of
    $h(P,P) = 0$
    is similar. It suffices to show that $h(\ell_i,\ell'_j) = 0$ for each
    $\ell_i \in L_i$
    and $\ell'_j \in L_j$. If $i \not = j$ then $h(\ell_i,\ell'_j) = 0$ and if
    $i = j$ then
    $h(\ell_i,\ell'_i) = h_i(\ell_i,\ell'_i) = 0$.
  \end{itemize}

  Every $\alpha \in \Sperm R_i$ can be seen as an element $\alpha'$ in $\Sperm
  R$, and a direct verification of the definition of signature shows that
  $\sign^\CM_{\alpha'} h = \sign^\CM_\alpha h_i$.  Therefore, (2) gives that
  $\sign^\CM_\alpha h_i = 0$ for every $\alpha \in \Sperm R_i$ and every $i=1,
  \ldots, t$. By Propositions~\ref{PLG-V1-no-pc} and \ref{prop:tors}
  we have that for every
  $i=1,\ldots, t$ there exists $n_i \in \N$ such that $2^{n_i} \x h_i$ is
  hyperbolic. Thus, letting $n=n_1+\cdots + n_t$, it follows that $2^n\x h_i$ is
  hyperbolic for $i=1,\ldots,t$ and hence that $2^n\x h$ is 
  hyperbolic. 
\end{proof}

\section*{Acknowledgements}

We thank Igor Klep for many stimulating exchanges of ideas, and
are very grateful to the Fondazione Bruno Kessler--Centro Internazionale per la
Ricerca Matematica and Augusto Micheletti for hosting the three of us in
December 2018 for a Research in Pairs stay during which
some of the ideas that eventually led to this paper were discussed.

We are very grateful to the Referees for reading our paper in great detail,
for spotting a serious mistake,
and for the many pertinent and helpful questions and suggestions, which led
to substantial improvements.

\end{document}